\let\ams@starttoc\@starttoc
\let\@starttoc\ams@starttoc
\patchcmd{\@starttoc}{\makeatletter}{\makeatletter\parskip\z@}{}{}
\newcommand{\mynewtheorem}[2]{
  \newaliascnt{#1}{theorem}
  \newtheorem{#1}[#1]{#2}
  \aliascntresetthe{#1}
  \expandafter\def\csname#1autorefname\endcsname{#2}
}
\newtheorem{theorem}{Theorem}[section]
\theoremstyle{definition}
\renewcommand*{\do}[1]{
\expandafter\newcommand \csname #1#1\endcsname{\mathbb{#1}}}
\renewcommand{\AA}{\mathbb{A}}
\renewcommand*{\do}[1]{
\expandafter\newcommand \csname f#1\endcsname{\mathfrak{#1}}}
\renewcommand*{\do}[1]{
\expandafter\newcommand \csname c#1\endcsname{\mathcal{#1}}}
\renewcommand*{\do}[1]{
\expandafter\newcommand \csname
b#1\endcsname{\mathbf{#1}}}
\newcommand{\DeclareMyOperator}[1]{
\expandafter\DeclareMathOperator\csname #1\endcsname{#1}}
\newcommand{\DeclareMathOperators}{\forcsvlist{\DeclareMyOperator}}
\newcommand{\Sch}{\mathsf{Sch}}
\newcommand{\Aff}{\mathsf{Aff}}
\newcommand{\Set}{\mathsf{Set}}
\renewcommand*{\do}[1]{
\expandafter\newcommand \csname #1\endcsname{\mathrm{#1}}}
\DeclareMathOperator{\et}{\acute{e}t}
\newcommand{\set}[1]{\left\{#1\right\}}
\newcommand{\wt}{\widetilde}
\newcommand{\bs}{\backslash}
\newcommand{\dra}{\dashrightarrow}
\newcommand{\ra}{\rightarrow}
\newcommand{\xra}[1]{\xrightarrow{#1}}
\newcommand{\thra}{\twoheadrightarrow}
\newcommand{\hra}{\hookrightarrow}
\newcommand{\tx}[1]{\text{#1}}
\newcommand{\QQellbar}{\overline\QQ_\ell}
\DeclareMathSymbol{\mh}{\mathord}{operators}{`\-}
\newcommand{\anbr}[1]{\langle #1 \rangle}
\newcommand{\crbr}[1]{\left \{ #1 \right \} }
\newcommand{\smbr}[1]{\left( #1 \right) }
\let\thm@indent\indent}{\let\thm@indent\noindent}
  {}{}
\title{Geometric Casselman--Shalika in mixed characteristic}
\author{Ashwin Iyengar}
\address{American Mathematical Society}
\email{ashwin.iyengar1@gmail.com}
\author{Milton Lin}
\address{Johns Hopkins University}
\email{clin130@jhu.edu}
\author{Konrad Zou}
\address{Universität Bonn}
\email{kzou@math.uni-bonn.de}
\begin{document}

\begin{abstract}
We establish a geometric analog of the Casselman--Shalika formula for a split connected reductive group over a mixed characteristic local field. In particular, we construct sheaves on the Witt vector affine Grassmannian which geometrize the Fourier coefficients of spherical Hecke operators, and compute their cohomology.
\end{abstract}

\maketitle

\tableofcontents

\section{Introduction}

The goal of this article is to prove a geometric version of the Casselman--Shalika formula for a split connected reductive group $G$ over a mixed characteristic local field.

The original Casselman--Shalika formula (cf. \cite{CS}) gives an explicit combinatorial formula for the values of unramified Whittaker functions, particularly those appearing in local components of generic automorphic representations, in order to compute their local $L$-factors. This formula is equivalent to an explicit description of the action of the spherical Hecke algebra for $G$ on the space of unramified Whittaker functions, which is the version that we consider in this paper.

The ``geometrization'' of this formula occurs over the Witt vector affine Grassmannian for $G$, first defined by Zhu in \cite{Zhu14} and further studied in \cite{BS15}. It is analogous to the geometrization carried out in \cite{NP00} for equi-characteristic local fields over the usual affine Grassmannian, which is a shadow of the action of the spherical Hecke category on the \textit{Whittaker category} constructed in \cite{FGV99}. This has played an important role in the geometric Langlands program of Beilinson--Drinfeld, see \cite{ArinGaits}, \cite{faergeman2022nonvanishing}, and also the local geometric Langlands program, \cite{Beraldo2017}, \cite{Campbell2021}.  Our geometrization of this formula in mixed characteristic suggests the existence of a mixed-characteristic Whittaker category with the action of the spherical Hecke category.

In a sequel to this paper, we will use our main theorem to prove an equivalence between the spherical Hecke category and a mixed characteristic version of the Iwahori--Whittaker category, following \cite{BGMRR}.

In the remainder of the introduction we give an overview of our geometrization, and explain how it recovers the original formula.

\subsection{Main results}

Fix two distinct primes $p \neq \ell$, a split connected reductive group $G$ over\footnote{This is for simplicity of exposition in the introduction. In the paper we work over arbitrary $p$-adic fields.} $\QQ_p$, a split maximal torus $T$ and a Borel $B$ containing $T$ with unipotent radical $N$. The \textit{Witt vector affine Grassmannian} is an ind-(perfect scheme) $\Gr_G$ over $\FF_p$ whose rational points are identified as
    \[ \Gr_G(\FF_p) = G(\QQ_p)/G(\ZZ_p). \]
By the Cartan decomposition $\Gr_G$ admits an stratification into \textit{affine Schubert cells} indexed by dominant cocharacters:
\[
    \Gr_G = \bigsqcup_{\lambda \in X_*(T)_+} \Gr_\lambda, \quad \quad \Gr_\lambda(\FF_p) = G(\ZZ_p) p^\lambda G(\ZZ_p)/G(\ZZ_p)
\]
Denote by $\Gr_{\le \lambda} = \overline\Gr_\lambda$ the closure of each cell, which is a union of lower dimensional strata. Let $\cA_\lambda$ denote its ($\ell$-adic) intersection cohomology sheaf. By the Iwasawa decomposition, we have a stratification into \textit{semi-infinite orbits} indexed by all cocharacters:
\[
    \Gr_G = \bigsqcup_{\nu \in X_*(T)} S_\nu, \quad \quad S_\nu(\FF_p) = N(\QQ_p)p^\nu G(\ZZ_p)/G(\ZZ_p).
\]
The sheaves that we study are supported on their intersection, which we denote by
    \[ \MV_{\lambda,\nu} := \Gr_{\le \lambda} \cap S_\nu. \]
In \autoref{character_sheaf}, associated to each $\Gr_{\le \lambda} \cap S_\nu$, we will construct a map
\[
    h_0^{\lambda,\nu}: \MV_{\lambda,\nu} \to \Gr_{\GG_a}
\]
which sends $n p^\nu$ to $h(n)$, where $h$ is the map
    \[ h: LN \to LN/[LN,LN] \xra\sim \bigoplus_{\alpha \in \Delta} L\GG_a \xra+ L\GG_a \thra L\GG_a/L^+\GG_a \to \Gr_{\GG_a}. \]
Here $L(\cdot)$ and $L^+(\cdot)$ denotes the loop space and arc space respectively, and $\Delta = \Delta(B)$ the set of simple roots for $B$. We then define a character sheaf (i.e. a multiplicative rank 1 local system) $\cL_\psi$ on\footnote{As we explain in \autoref{character_sheaf}, for technical reasons this is really a sheaf on a finite dimensional piece of $\Gr_{\GG_a}$.} $\Gr_{\GG_a}$, which geometrizes a fixed nontrivial character $\psi: \QQ_p \to \QQellbar^\times$.

Now we can state our main theorem.

\begin{theorem}[\autoref{p:dominant_equal_case_main_theorem}, \autoref{p:nu_le_lambda}]
\label{t:main_theorem}
If $\lambda,\nu \in X_*(T)$ are two dominant cocharacters, then
\[
    H^i_c(\MV_{\lambda,\nu} \times_{\Spec \FF_p} \Spec \overline\FF_p, \cA_\lambda \otimes (h_0^{\lambda,\nu})^*\cL_\psi) =
    \begin{cases}
        \QQellbar(-\anbr{\rho,\nu})
        & \text{$i = \anbr{2\rho,\nu}$ and $\nu = \lambda$} \\
        0 & \text{otherwise}
    \end{cases}
\]
where $\rho$ is the half-sum of positive roots and $\smbr{-\anbr{\rho,\nu}}$ denotes a Tate twist.
\end{theorem}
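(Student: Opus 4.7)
The statement splits into the non-vanishing diagonal case $\nu = \lambda$ and the vanishing off-diagonal case $\nu < \lambda$, noting that $\MV_{\lambda,\nu}$ is empty unless $\nu \le \lambda$.

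For the diagonal case, the key geometric input is that $\MV_{\lambda,\lambda} = \Gr_\lambda \cap S_\lambda$ is an open subset of the smooth top Schubert cell $\Gr_\lambda$: by Mirkovi\'c--Vilonen, $\Gr_\mu \cap S_\lambda$ is non-empty only when $\lambda \le \mu$ in the coroot order, so combined with $\mu \le \lambda$ this forces $\mu = \lambda$, and every element of $\Gr_\lambda \cap S_\lambda$ can be written as $n \cdot p^\lambda$ with $n$ in the arc group $L^+N$. Consequently $h_0^{\lambda,\lambda}$ factors through the origin of $\Gr_{\GG_a}$ and $(h_0^{\lambda,\lambda})^*\cL_\psi$ is trivial; combining $\dim \MV_{\lambda,\lambda} = \anbr{2\rho,\lambda}$ with the purity normalization $\cA_\lambda|_{\Gr_\lambda} = \QQellbar[\anbr{2\rho,\lambda}](\anbr{\rho,\lambda})$ gives the asserted cohomology $\QQellbar(-\anbr{\rho,\lambda})$ in degree $\anbr{2\rho,\lambda}$.

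For the off-diagonal vanishing $\nu < \lambda$, I would adapt the Whittaker-coefficient argument of Frenkel--Gaitsgory--Vilonen / Ng\^o--Polo to the mixed-characteristic setting. The essential mechanism is a Fourier-type cancellation driven by the non-triviality of $\cL_\psi$: on each MV stratum $\Gr_\mu \cap S_\nu \subset \MV_{\lambda,\nu}$ with $\mu > \nu$, some simple-root component of $h_0^{\lambda,\nu}$ is non-constant, producing a free $\GG_a$-action along the corresponding perfect root subgroup under which $\cA_\lambda$ is equivariant but $(h_0^{\lambda,\nu})^*\cL_\psi$ is translated by a non-trivial character, so that $H^*_c(\AA^1, \cL_\psi) = 0$ kills the Whittaker cohomology of this stratum. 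The remaining bottom stratum $\mu = \nu$ (where $h_0^{\lambda,\nu}$ is trivial but $\cA_\lambda$ has non-trivial Kazhdan--Lusztig stalk) is handled by induction on $\lambda - \nu$ in the positive coroot cone, combined with excision on the stratification $\MV_{\lambda,\nu} = \bigsqcup_{\nu \le \mu \le \lambda}(\Gr_\mu \cap S_\nu)$.

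\textbf{Main obstacle.} The principal challenge is realizing the Whittaker-vanishing argument rigorously in the perfect ind-scheme setting of the Witt-vector affine Grassmannian, using the six-functor formalism of Bhatt--Scholze and Zhu. This requires verifying equivariance of $\cA_\lambda$ under the relevant perfected subgroups of $LG$, working with the finite-dimensional approximations of $\Gr_{\GG_a}$ set up in \autoref{character_sheaf}, and tracking the IC normalization and Tate twists in mixed characteristic. The most delicate step is the precise mechanism for the inductive cancellation between the free-action vanishings on the upper strata and the Kazhdan--Lusztig stalk contribution on the bottom stratum $\mu = \nu$, which requires careful accounting analogous to the decomposition-theorem arguments in the equi-characteristic case.
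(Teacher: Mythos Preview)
Your diagonal case $\nu=\lambda$ is correct and matches the paper's argument (\autoref{p:dominant_equal_case_main_theorem}): $\MV_{\lambda,\lambda}=L^+N\varpi^\lambda L^+G/L^+G$, so $h_0^{\lambda,\lambda}$ factors through the identity section and the computation reduces to Zhu's weight-space count.

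The off-diagonal argument has a genuine gap. The Fourier-type cancellation you invoke requires a $\GG_a$-subgroup acting on $\Gr_\mu\cap S_\nu$ under which $\cA_\lambda$ is equivariant but $(h_0^{\lambda,\nu})^*\cL_\psi$ transforms by a nontrivial character. The only group acting naturally on these strata and preserving $\cA_\lambda$ is $L^+N$ (or a quotient thereof); but for $n\in L^+N$ one has $h_0^{\lambda,\nu}(n\cdot x)=h(n)+h_0^{\lambda,\nu}(x)$ and $h(n)\in L^+\GG_a/L^+\GG_a=0$, so $h_0^{\lambda,\nu}$ is $L^+N$-\emph{invariant} and the pullback of $\cL_\psi$ is genuinely equivariant, not twisted. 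Subgroups of $LN$ not contained in $L^+N$ would shift $h_0$ nontrivially, but they do not preserve the Schubert cells $\Gr_\mu$, so $\cA_\lambda$ loses its equivariance. In short, with $\mu=0$ dominant there is no $\GG_a$ with the properties you need; this is exactly why the paper's \autoref{c:nondominant_vanishing} has the hypothesis ``$\mu$ not dominant''. Your inductive scheme on the bottom stratum $\mu=\nu$ then has nothing to cancel against.

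The paper's route is quite different and is engineered precisely to manufacture non-dominant twists. One writes $\cA_\lambda$ as a summand of a convolution $\cA_{\lambda_1}\star\cdots\star\cA_{\lambda_n}$ with each $\lambda_i$ (quasi-)minuscule (\autoref{lem:minuscule_convolution}), and then breaks $R\Gamma_c$ of the convolution over $\MV_{|\lambda_\bullet|,\nu}$ into a sum over $\nu_\bullet$ with $|\nu_\bullet|=\nu$ of tensor products $\bigotimes_i R\Gamma_c(\MV_{\lambda_i,\nu_i},\cA_{\lambda_i}\otimes(h_{\mu_{i-1}}^{\lambda_i,\nu_i})^*\cL_\psi)$ (\autoref{p:break_down_of_cohomology}). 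This decomposition is itself nontrivial in mixed characteristic because the twisted product $\MV_{\lambda_\bullet,\nu_\bullet}$ need not split; one passes to auxiliary $L^rN$-torsors to untwist (\autoref{auxiliary_torsors}). The point is that the partial sums $\mu_{i-1}=\nu_1+\cdots+\nu_{i-1}$ are typically \emph{not} dominant, and for those factors the twisted-equivariance vanishing of \autoref{sec:non_dominant_case} applies. The surviving terms, with all $\mu_i$ dominant and each $\nu_i\in W\lambda_i\cup\{0\}$, are computed by hand for (quasi-)minuscule $\lambda_i$ (\autoref{dominant_case_weyl_orbit}, \autoref{sec:zero_orbit}) and matched against $\dim W^\nu_{\lambda_\bullet}$ via a Littelmann-path count, forcing the $\cA_\lambda$-summand to vanish for $\nu<\lambda$.
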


As a corollary of this, we give a new proof of \cite[Theorem 5.2]{FGKV} (\autoref{thm:classical_CS} in our paper) in mixed characteristic, which is equivalent to the classical Casselman--Shalika formula. Recall that the \textit{spherical Hecke algebra} for $G$ is
\[
    \cHk := \Fct_c(G(\ZZ_p)\bs G(\QQ_p)/G(\ZZ_p), \QQellbar)
\]
with its convolution structure, and the space of \textit{unramified (compactly-supported) Whittaker functions} is
\[
    \cWhit := \Fct_c((N(\QQ_p),\psi) \bs G(\QQ_p)/G(\ZZ_p), \QQellbar).
\]
The algebra $\cHk$ admits a basis $\set{H_\lambda}_{\lambda \in X_*(T)_+}$ coming from the Satake isomorphism. The space $\cWhit$ admits a basis $\set{\phi_\nu}$ such that $\phi_\nu$ is supported on the double coset $N(\QQ_p) p^\nu G(\ZZ_p)$ and is uniquely determined by setting
    \[ \phi_\nu(p^\nu) = q^{-\anbr{\rho,\nu}}. \]
There is a natural (right) convolution action $\star$ of $\cHk$ on $\cWhit$.

\begin{theorem}[\autoref{thm:hecke_action_whittaker_classical}]
\label{thm:classical_CS}
For all $\lambda \in X_*(T)_+$,
\[
    \phi_0 \star H_\lambda = \phi_\lambda.
\]
\end{theorem}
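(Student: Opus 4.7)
The plan is to use Grothendieck's sheaf--function dictionary to deduce this function-theoretic identity from the cohomological computation of \autoref{t:main_theorem}.

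First, since $\phi_0$ is left $(N(\QQ_p),\psi)$-equivariant and $H_\lambda$ is $G(\ZZ_p)$-bi-invariant, $\phi_0 \star H_\lambda$ lies in $\cWhit$. A standard Iwasawa-decomposition argument, using the genericity of $\psi$, shows that $\{\phi_\nu\}_{\nu \in X_*(T)_+}$ is in fact a basis of $\cWhit$: for non-dominant $\nu$ the subgroup $N(\QQ_p) \cap p^\nu G(\ZZ_p) p^{-\nu}$ meets some simple-root subgroup in a lattice on which $\psi$ is nontrivial, forcing every Whittaker function to vanish on $N(\QQ_p) p^\nu G(\ZZ_p)$. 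Thus it suffices to prove $(\phi_0 \star H_\lambda)(p^\nu) = \phi_\lambda(p^\nu)$ for each dominant $\nu$.

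Unwinding the convolution (with Haar measure normalized so $G(\ZZ_p)$ has volume $1$) and restricting to the support of $\phi_0$ via the Iwasawa decomposition, $(\phi_0 \star H_\lambda)(p^\nu)$ becomes a sum indexed by $\MV_{\lambda,\nu}(\FF_p)$. At a point $y \in \MV_{\lambda,\nu}(\FF_p)$ with Iwasawa representative $y = n p^\nu \bmod G(\ZZ_p)$, the corresponding summand is the product of $\psi$ applied to the image of $n$ in $N/[N,N]$ (summed over simple roots) with the value $H_\lambda(y)$. By the construction of $h_0^{\lambda,\nu}$ and $\cL_\psi$ in \autoref{character_sheaf}, this equals $\tr(\Frob_y,(h_0^{\lambda,\nu})^*\cL_\psi) \cdot \tr(\Frob_y,\cA_\lambda)$, and so
\[
(\phi_0 \star H_\lambda)(p^\nu) \;=\; \sum_{y \in \MV_{\lambda,\nu}(\FF_p)} \tr\bigl(\Frob_y,\, \cA_\lambda \otimes (h_0^{\lambda,\nu})^*\cL_\psi\bigr).
\]

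By the Grothendieck--Lefschetz trace formula, the right side equals the alternating sum of traces of Frobenius on the compactly-supported cohomology groups computed in \autoref{t:main_theorem}. This vanishes for $\nu \neq \lambda$, and for $\nu = \lambda$ only the degree $\anbr{2\rho,\lambda}$ term survives, contributing $(-1)^{\anbr{2\rho,\lambda}} q^{\anbr{\rho,\lambda}}$. Matching this with $\phi_\lambda(p^\lambda) = q^{-\anbr{\rho,\lambda}}$ is purely a matter of tracking the perverse shift and half-Tate twist built into $\cA_\lambda$ (precisely the normalizations that identify its trace-of-Frobenius function with $H_\lambda$). The main obstacle is therefore not conceptual but a bookkeeping one: reconciling the normalizations of $\cA_\lambda$, the Tate twist $(-\anbr{\rho,\nu})$ in \autoref{t:main_theorem}, and the Haar measure so that Grothendieck--Lefschetz reproduces $\phi_\lambda$ on the nose.
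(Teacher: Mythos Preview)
Your proposal is correct and follows essentially the same route as the paper: apply the sheaf--function dictionary to $\cA_\lambda \otimes (h_0^{\lambda,\nu})^*\cL_\psi$ on $\MV_{\lambda,\nu}$, invoke Grothendieck--Lefschetz, and read off the answer from \autoref{t:main_theorem}. The paper handles your final ``bookkeeping'' step by citing the Kato--Lusztig identity $H_\lambda = (-1)^{2\anbr{\rho,\lambda}}\tr(\cA_\lambda)$ and by using $\cL_{\psi^{-1}}$ rather than $\cL_\psi$ when unwinding the convolution, which is where the sign and the exponent of $q$ get reconciled.
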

This follows from applying the sheaf-function dictionary to $\cA_\lambda \otimes (h_0^{\lambda,\nu})^*\cL_\psi$, see \autoref{trace_function_dictionary}.

\subsection{Comparison with Ng\^{o}--Polo}

In \cite{NP00} the geometric Casselman--Shalika formula is proven for split groups over $\FF_q((t))$. Our proof of \autoref{t:main_theorem} is similar in spirit to \textit{op. cit}, and involves an elaborate series of reduction steps to simpler specific cases. However, there are two significant differences between the proofs, which we explain here.

\subsubsection{There is no ``residue map''.}
In both equal and mixed characteristic, one defines the map
\[
    h: LN \to LN/[LN,LN] \xra\sim \sum_{\alpha \in \Delta} L\GG_a \xra{+} L\GG_a.
\]
In equal characteristic, one can precompose this map with the \textit{residue map}
\[
    L\GG_a \to \GG_a, \quad\quad \sum_{n \in \ZZ} x_nt^n \mapsto x_{-1}
\]
which gives a map $LN \to L\GG_a \xra{\res} \GG_a$. Using the Artin--Schreier cover (i.e. the Lang isogeny $x \mapsto \Fr_q(x)x^{-1}$) a character $\psi: \FF_p \to \QQellbar^\times$ gives rise to a rank 1 local system $\cL_\psi$ on $\GG_a$, which we can then pull back.

In mixed characteristic, this fails for two reasons. The first is that there is no residue map; in fact, there is no nontrivial group homomorphism $\QQ_p \to \FF_p$. The second is that a nontrivial group homomorphism $\psi: \QQ_p \to \QQellbar^\times$ cannot factor through \textit{any} finite quotient of $\QQ_p$. However, without loss of generality, $\psi$ factors as
\[
    \QQ_p \to \QQ_p/\ZZ_p \xra{\psi} \QQellbar^\times.
\]
We thus geometrize $\psi$ by considering the map
    \[ LN \xra{h} L\GG_a \to L\GG_a/L^+\GG_a. \]
and constructing a rank 1 local system $\cL_\psi$ on $L\GG_a/L^+\GG_a$. This presents technical difficulties since $L\GG_a/L^+\GG_a$ is an ind-(perfect scheme). However, in practice we we only care about the restriction of $h$ to the finite-dimensional piece $\Gr_{\leq\lambda} \cap S_\nu$, and this restriction factors through a finite-dimensional subscheme $L^{\geq-s}\GG_a/L^+\GG_a$ for some $s > 0$ (whose $\FF_p$-points are $p^{-s}\ZZ_p/\ZZ_p$):
\[
\begin{tikzcd}
    \Gr_{\leq\lambda} \cap S_\nu \rar \ar[dr, dashed] & L\GG_a/L^+\GG_a \\
    &  L^{\geq-s}\GG_a/L^+\GG_a \ar[u, hook]
\end{tikzcd}
\]
This subscheme admits a theory of character sheaves following Lusztig \cite{Lusztig2006}, so we are reduced to geometrizing (see \autoref{character_sheaf})
    \[ \psi|_{p^s\ZZ_p/\ZZ_p}: p^s\ZZ_p/\ZZ_p \to \QQellbar^\times. \]

\subsubsection{There is no Birkhoff decomposition.}

In \autoref{convolution_products}, we show how we can reduce the cohomological computation to (quasi-)minuscule $\lambda$.
If $\nu_\bullet$ and $\lambda_\bullet$ are two sequences of cocharacters in $X_*(T)$ then one can define the twisted product
\[
    \Gr_{\le \lambda_\bullet} \cap S_{\nu_\bullet} := \smbr{\Gr_{\le \lambda_1} \cap S_{\nu_1}} \wt{\times} \cdots \wt{\times} \smbr{\Gr_{\le \lambda_n} \cap S_{\nu_n}}
\]
which is a subspace of the convolution Grassmannian. As pointed out in \cite[Remark 2.6]{Zhu14}, it is unclear whether this twisted product splits, unlike in \cite[Lemma 9.1]{NP00}. Hence, our argument requires constructing a fiber bundle over $\Gr_{\leq\lambda_\bullet} \cap S_{\nu_\bullet}$ which splits the twisted product into a regular product; this is addressed in \autoref{auxiliary_torsors}. This was inspired by the proof of \cite[Corollary 2.17]{Zhu14}.

The essential obstacle to the splitting of the above twisted product is that there is no ``negative loop group'' in mixed characteristic, which means that there is no analogue of the Birkhoff decomposition for $p$-adic groups. This also breaks many of the basic lemmas in \cite{NP00}, and requires us to reprove basic facts about $\Gr_G$ and its subspaces, many of which are already used in \cite{Zhu14}.

\subsection{Strategy of the argument}

First recall the following generalization of geometric Casselman--Shalika, which we state as a conjecture.

\begin{conjecture}
\label{conj:full_CS}
If $\lambda \in X_*(T)$ is a dominant coweight and $\nu, \mu \in X_*(T)$ are two coweights such that $\nu+\mu$ is dominant, then there is a canonical isomorphism
\[
    H^i_c(\MV_{\lambda,\nu}, \cA_\lambda \otimes (h_\mu^{\lambda,\nu})^*\cL_\psi) \xra\sim
    \begin{cases}
        \Hom_{\widehat{G}}(V^\lambda \otimes V^\mu, V^{\mu+\nu})(-\anbr{\rho,\nu})
   &  i = \anbr{2\rho,\nu} \text{ and } \mu \in X_*(T)_+ \\
        0 & \text{otherwise}
    \end{cases}
\]
where $V^\alpha$ denotes the algebraic representation of $\hat{G}$ of highest weight $\alpha$.
\end{conjecture}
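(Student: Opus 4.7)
I would reduce \autoref{conj:full_CS} to the proven \autoref{t:main_theorem} via a convolution argument, the idea being to geometrically realize the tensor product $V^\lambda \otimes V^\mu$ using the convolution Grassmannian and then invoke the $\mu = 0$ main theorem for each simple summand.

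The central input is the proper convolution map $m \colon \Gr_{\le\lambda} \wt\times \Gr_\mu \to \Gr_G$ together with the geometric Satake decomposition
\[
    m_!(\cA_\lambda \wt\boxtimes \cA_\mu) \cong \bigoplus_{\tau \in X_*(T)_+} \Hom_{\widehat G}(V^\tau, V^\lambda \otimes V^\mu) \otimes \cA_\tau.
\]
\textbf{Step 1.} Identify $\MV_{\lambda,\nu}$ (in the $\mu$-twisted guise relevant here) with a natural subspace of the convolution Grassmannian, roughly $(\Gr_{\le\lambda} \wt\times \{p^\mu\}) \cap m^{-1}(S_{\nu+\mu})$, and show that the Whittaker map factors as $h_\mu^{\lambda,\nu} = h_0^{\tau,\nu+\mu} \circ m$ for each relevant $\tau$. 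This is the crucial geometric input and would require constructing an analogue of the torsor from \autoref{auxiliary_torsors} that is additionally compatible with the character sheaf.

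\textbf{Step 2.} Granting Step 1, proper base change along $m$ combined with the Satake decomposition yields
\[
    H^i_c(\MV_{\lambda,\nu}, \cA_\lambda \otimes (h_\mu^{\lambda,\nu})^*\cL_\psi) \cong \bigoplus_\tau \Hom_{\widehat G}(V^\tau, V^\lambda \otimes V^\mu) \otimes H^{i - \anbr{2\rho,\mu}}_c(\MV_{\tau,\nu+\mu}, \cA_\tau \otimes (h_0^{\tau,\nu+\mu})^*\cL_\psi)(\anbr{\rho,\mu}),
\]
where the cohomological shift by $\anbr{2\rho,\mu}$ and Tate twist by $(\anbr{\rho,\mu})$ come from restricting $\cA_\mu$ to the top stratum $\{p^\mu\} \subset \Gr_{\le\mu}$. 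Since $\nu+\mu$ is dominant by hypothesis, \autoref{t:main_theorem} applies to each summand and returns $\QQellbar(-\anbr{\rho,\nu+\mu})$ concentrated in degree $\anbr{2\rho,\nu+\mu}$ precisely when $\tau = \nu+\mu$. After absorbing the shift and twist, this produces $\QQellbar(-\anbr{\rho,\nu})$ in degree $\anbr{2\rho,\nu}$, matching the conjecture.

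\textbf{Main obstacle.} The hardest step is Step 1. As the authors emphasize in \autoref{convolution_products}, the twisted product in the Witt vector setting does not split due to the absence of a Birkhoff decomposition, which is precisely why \autoref{auxiliary_torsors} is needed even for the $\mu = 0$ theorem. Generalizing that construction to accommodate an arbitrary $\mu$-shift together with the character sheaf will likely require a more elaborate torsor argument, especially when $\mu$ or $\nu$ is non-dominant individually. A secondary subtlety is that when $\mu$ is not minuscule the point $\{p^\mu\}$ is not open in $\Gr_{\le\mu}$, so the restriction of $\cA_\mu$ to $\{p^\mu\}$ is not simply a shift of the constant sheaf but carries information from the local intersection cohomology; handling this correctly in the decomposition requires care, and is a genuinely new difficulty beyond the scope of the proof of \autoref{t:main_theorem}.
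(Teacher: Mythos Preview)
The statement in question is \emph{Conjecture~\ref{conj:full_CS}}, and the paper explicitly does not prove it: immediately after stating it the authors write ``Although we don't prove the full conjecture, the proof of \autoref{t:main_theorem} boils down to proving certain special cases of \autoref{conj:full_CS}.'' So there is no proof in the paper to compare your proposal against. What the paper does establish are the special cases listed in the introduction (non-dominant~$\mu$; $\nu=\lambda$ for any dominant~$\mu$; $\nu=w\lambda$ for (quasi-)minuscule~$\lambda$ and suitable dominant~$\mu$; $\nu=0$ for quasi-minuscule~$\lambda$; and the main theorem $\mu=0$ with $\nu$ dominant). The logical flow is the reverse of yours: the paper feeds these special cases into the convolution machinery of \autoref{sec:break_down_of_cohomology} to prove \autoref{t:main_theorem}, rather than deducing the general conjecture from \autoref{t:main_theorem}.

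Your Step~1/Step~2 contains a genuine gap. The geometric Satake decomposition
\[
    m_!(\cA_\lambda\,\wt\boxtimes\,\cA_\mu)\;\cong\;\bigoplus_\tau \Hom_{\widehat G}(V^\tau,V^\lambda\otimes V^\mu)\otimes\cA_\tau
\]
is a statement about the pushforward along $m$ from the \emph{entire} convolution space $\Gr_{\le\lambda}\,\wt\times\,\Gr_{\le\mu}$. Restricting the second factor to the single point $\{p^\mu\}$ does identify the resulting slice with $\MV_{\lambda,\nu}$ (your observation here is correct), but it simultaneously destroys the pushforward: proper base change would give you a \emph{stalk} of $m_!$, not its compactly supported cohomology over $S_{\nu+\mu}$. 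So the displayed isomorphism in your Step~2 does not follow. If instead you keep the full convolution and compute $R\Gamma_c\bigl(m^{-1}(S_{\nu+\mu}),\,(\cA_\lambda\,\wt\boxtimes\,\cA_\mu)\otimes m^*(h_0)^*\cL_\psi\bigr)$ in two ways, one way (Satake plus \autoref{t:main_theorem}) indeed produces $\Hom_{\widehat G}(V^\lambda\otimes V^\mu,V^{\nu+\mu})$ in the correct degree. But the other way, via the stratification of \autoref{p:break_down_of_cohomology} with $\lambda_\bullet=(\mu,\lambda)$, gives a sum over $\nu_1+\nu_2=\nu+\mu$ of
\[
    R\Gamma_c(\MV_{\mu,\nu_1},\cA_\mu\otimes(h_0)^*\cL_\psi)\otimes R\Gamma_c(\MV_{\lambda,\nu_2},\cA_\lambda\otimes(h_{\nu_1})^*\cL_\psi),
\]
and the conjecture's left-hand side is only the single summand $\nu_1=\mu$, $\nu_2=\nu$. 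The remaining summands with $\nu_1$ \emph{non-dominant} are not killed by \autoref{t:main_theorem} (which requires both arguments dominant) nor by \autoref{c:nondominant_vanishing} (which concerns the twisting parameter, here $0$, not $\nu_1$). Controlling those extra terms is exactly the missing content of the conjecture. You also need the direct-sum splitting of \autoref{p:break_down_of_cohomology} for general $\lambda,\mu$, but the parity argument used there (via \autoref{p:collapse_of_ss}) relies on the (quasi-)minuscule computations of \autoref{dominant_case_weyl_orbit} and \autoref{sec:zero_orbit}, so this is circular for arbitrary $\lambda,\mu$.
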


The analogous statement in equal characteristic is proven in \cite{FGV99}.

The map $h_\mu^{\lambda,\nu}$ is the composition of $h_0^{\lambda,\nu}$ and the adjoint action of $p^\mu$. Setting $\mu = 0$ recovers the statement of \autoref{t:main_theorem}. Applying the sheaf-function dictionary to \autoref{conj:full_CS} yields the equality
\[
    \phi_\nu \star H_\lambda = \sum_{\mu \in X_*(T)_+} \dim \Hom_{\hat G}(V^\lambda \otimes V^\nu, V^\mu) \phi_\mu,
\]
which we note can also be immediately derived from \autoref{thm:classical_CS} and an understanding of how tensor products in $\Rep(\hat G)$ decompose into irreducibles. Although we don't prove the full conjecture, the proof of \autoref{t:main_theorem} boils down to proving certain special cases of \autoref{conj:full_CS}, which we outline below.

\begin{itemize}
    \item Suppose $\mu$ is not dominant. Then a twisted equivariance argument shows that the cohomology vanishes in all cases. This is explained in \autoref{sec:non_dominant_case}.

    \item Suppose $\mu$ is dominant and $\lambda$ is minuscule. Then $\Gr_{\leq\lambda} \cap S_\nu$ is nonempty if and only if $\nu \in W \cdot \lambda$, where $W$ denotes the Weyl group of $G$. This case is handled in \autoref{dominant_case_equal_cocharacters} and \autoref{dominant_case_weyl_orbit}. In this case
        \[ \Gr_{\leq\lambda} \cap S_{w\lambda} = \Gr_{\lambda} \cap S_{w\lambda} \]
    which simplifies its geometry considerably --- in this case, we show that $(h_\mu^{\lambda,\nu})^*\cL_\psi$ is the constant sheaf, which reduces us to computing cohomology of $\cA_\lambda$, which is done in \cite{Zhu14}.
    \item Suppose $\mu$ is dominant and $\lambda$ is quasi-minuscule. Then $\Gr_{\leq\lambda} \cap S_\nu$ is nonempty if and only if $\nu \in W \cdot \lambda \cup \set{0}$.
    \begin{itemize}
        \item Suppose $\nu = w\lambda$. Then this is handled in the same way as the minuscule case above.
        \item Suppose $\nu = 0$. Then $\Gr_{\leq\lambda} \cap S_0$ is much more complicated and contains a non-smooth point $\Gr_0$, so requires a more careful analysis performed in \autoref{sec:zero_orbit}. Additionally, we only treat the case where $\mu = 0$. We use a resolution of singularities $\wt\Gr_{\leq\lambda} \to \Gr_{\leq\lambda}$ constructed in \cite{Zhu14} using parahoric subgroups of $G(\ZZ_p)$, and explicitly compute the restriction of $h_0^{\lambda,\nu}$ to various pieces of the pullback of $\Gr_{\leq\lambda} \cap S_\nu$ to $\wt\Gr_\lambda$.
    \end{itemize}
    \item Finally, suppose $\mu = 0$ and $\lambda$ is an arbitrary cocharacter. We first reduce to the case where $\lambda$ is either minuscule or quasi-minuscule. This reduction is carried out in \autoref{sec:break_down_of_cohomology}, and involves finding $\cA_\lambda$ in the convolution of a collection $\cA_{\lambda_i}$ with $\lambda_i$ (quasi-)minuscule, and then transporting sheaves across various auxiliary torsors involved in the construction of convolution products of $\Gr_{\leq\lambda_i} \cap S_{\nu_i}$. These results rely on explicit descriptions of $\Gr_{\leq\lambda_i} \cap S_{\nu_i}$ in the (quasi-)minuscule case which we explain in detail in \autoref{structure_of_MV_Lambda_wLambda}.
\end{itemize}

In \autoref{sec:Notation}, we review notation.
In \autoref{sec:Witt_vector_affine_Grassmannian}, we introduce the Witt vector affine Grassmannian and define the character sheaf $h$ and its twisted version, $h_\mu$. Finally, we apply the results to recover the classical Casselman-Shalika formula in \autoref{trace_function_dictionary}.

\subsection*{Acknowledgements}

We thank Rahul Dalal, David Gepner, Pol van Hoften, James Newton, Carsten Peterson, Toan Q. Pham, Sam Raskin, Yiannis Sakellaridis, Loren Spice, and Xinwen Zhu for helpful conversations and correspondence. We also thank Ceremony Coffee for good working conditions, the Strawberry Sublime, and the 12oz regular latte.

The third named author acknowledges funding from the DFG Leibniz Preis through Peter Scholze.

\newpage

\section{Notation}
\label{sec:Notation}

Fix a prime number $p$ and let $q$ denote a positive power of $p$. Let $k = \overline\FF_q$. Let $W(-)$ denote the $p$-typical Witt vectors. Fix a finite totally ramified extension $F_0/W(\FF_q)[1/p]$ and let $\cO_0 \subset F_0$ denote its ring of integers. Then let $F$ denote the composite of $F_0$ and $W(k)$, and let $\cO$ denote its ring of integers.

Fix a prime number $\ell \neq p$.

We will freely use the $\ell$-adic sheaf theory developed in \cite[Appendix A]{Zhu14} for perfect $k$-schemes. We briefly recall the set up:
\begin{itemize}
    \item Let $\Sch^{\pf}_k$ denote the category of perfect schemes and $D^b_c(X,\QQellbar)$ the bounded derived category of $\ell$-adic sheaves on $X \in \Sch^{\pf}_k$.
    \item Fix a square root of $q$ to define a half Tate twist, $\QQellbar (\frac{1}{2})$.
\end{itemize}

We also fix some group theory notation.
\begin{itemize}
    \item Let $G$ be a split reductive group scheme over $\Spec \cO$, and let $\hat G$ denote its dual group, defined over $\QQellbar$. Fix a split maximal torus $T \subset G$ and a Borel $B \subset G$ containing it. Let $N$ denote the unipotent radical of $B$.
    \item Let $B^-$ denote the Borel opposite to $B$, and let $N^-$ denote its unipotent radical.
    \item Let $\Phi$ denote the set of all roots in the root system for $G$, and let $\Phi_+$ denote the set of positive roots corresponding to $B$. Let $W$ denote the corresponding Weyl group. Let $\Delta \subset \Phi_+$ denote the set of simple roots.
    \item For each $w \in W$ we pick a lift in $G(\cO)$, which we abusively also denote by $w \in G(\cO)$.
    \item Let $n_\alpha: N_\alpha \hra G$ denote the inclusion of the root subgroup corresponding to $\alpha \in \Phi$.
    \item Let $\rho = \frac{1}{2}\sum_{\alpha \in \Phi_+} \alpha$.
    \item Let $\Phi^{\vee}_+$, denote the set of positive coroots. This determines an order on $X_\bullet$: $\lambda \ge \lambda '$ if and only if $\lambda - \lambda ' \in \NN  \Phi^\vee_+$.
    \item Let $X_*(T) = \Hom(\GG_m, T)$ denote the lattice of cocharacters, and let $X_*(T)_+$ denote the cone of dominant cocharacters corresponding to $B$.
    \item Write $\leq$ for the usual Bruhat order with respect to the positive roots.
    \item If $\lambda \in X_*(T)_+$, let $V^\lambda$ denote the irreducible representation of $\hat G$ of highest weight $\lambda$. The set of weights appearing in $V^\lambda$ is
        \[ \Omega(\lambda) = \set{\mu \in X_*(T) : w\mu \leq \lambda \text{ for all } w \in W}. \]
    \item If $\nu \in X_*(T)$, we write $\varpi^\nu := \nu(\varpi)$.
    \item In general, if $H$ is a group and $h \in H$ we let $\ad(h)$ denote conjugation by $h$.
    \item For $\lambda \in X_*(T)$, let $P_\lambda$ denote the parabolic subgroup of $G$ generated by the root subgroups $N_\alpha$ of $G$ corresponding to those roots $\alpha$ satisfying $\anbr{\alpha, \lambda} \leq 0$.
\end{itemize}

\begin{definition}
\label{def:minuscule_quasiminuscule}
Let $M$ denote the set of minimal elements for the ordering on the set $X_*(T)_+ \setminus \set{0}$. If $\lambda \in M$, then either
\begin{enumerate}
    \item $\Omega(\lambda) = W \cdot \lambda$, in which case we say that $\lambda$ is \textit{minuscule}, or
    \item $\Omega(\lambda) = W \cdot \lambda \cup \set{0}$, in which case we say that $\lambda$ is \textit{quasi-minuscule}.
\end{enumerate}
\end{definition}

There is a useful alternative characterization, as follows.
\begin{p}
\label{alternative_characterization_quasi_minuscule}
    Let $\mu \in M$.
    \begin{enumerate}
        \item $\lambda$ is minuscule if $\anbr{\alpha,\lambda}\in \crbr{0,\pm 1}$ for all $\alpha \in \Phi$.
        \item $\lambda$ is quasi-minuscule if there exists a unique root $\gamma$ such that $\anbr{\gamma,\lambda} \ge 2$. In this case, $\gamma = \lambda^\vee$.
    \end{enumerate}
\end{p}

\newpage

\section{Witt vector affine Grassmannian}
\label{sec:Witt_vector_affine_Grassmannian}

In this section we introduce the Witt vector affine Grassmannian and study the relevant subspaces.

\begin{definition}
If $R$ is a perfect $k$-algebra, write
    \[ W_\cO(R) = W(R) \otimes_{W(k)} \cO \]
We also define the truncated Witt vectors
    \[ W_{\cO,h}(R) = W(R) \otimes_{W(k)} \cO/\varpi^h. \]
\end{definition}

\begin{definition}[{{\cite[Section 1]{Zhu14}}}]
\label{def:standard_constructions}
$ $
\begin{itemize}
    \item Let $\Aff_F^{\tx{ft}}$ and $\Aff_\cO^{\tx{ft}}$ denote the categories of finite type schemes over $F$ and $\cO$, respectively. Let $\Aff_k^{\pf}$ denote the category of perfect affine $k$-schemes.
    \begin{align*}
        L^h: \Aff^{\tx{ft}}_\cO &\to \Fun(\Aff^{\pf}_k, \Set) \\
        \cX &\mapsto L^h\cX(R) = X(W_{\cO,h}(R)) \\
        L^+:\Aff^{\tx{ft}}_\cO &\to \Fun(\Aff^{\pf}_k, \Set) \\
        \cX &\mapsto L^+\cX(R):= \cX(W_{\cO}(R)) = \varprojlim_h L^h\cX(R) \\
        L: \Aff^{\tx{ft}}_F &\to \Fun(\Aff^{\pf}_k, \Set) \\
        X &\mapsto LX(R) = X(W_\cO(R)[1/\varpi])
    \end{align*}
    The space $L^h\cX$ is called the ($h$-)truncated positive loop space, and is a perfect $k$-scheme. The space $L^+\cX$ is called the positive loop space, and is a perfect $k$-scheme. The space $LX$ is called the loop space, and is an ind-(perfect $k$-scheme). As a consequence of \cite{greenberg1961schemata}, we have
    \[
        L^+\cX \simeq \varprojlim_{h} L^h\cX.
    \]
    If $X$ or $\cX$ is a group, then the resulting loop objects are groups as well.
    \item If $H$ is any smooth affine group scheme over $\cO$, define the \'etale quotient
        \[ \Gr_H = LH/L^+H \]
    called the \textit{Witt vector affine Grassmannian for $H$}.
\end{itemize}
\end{definition}

\begin{definition}
\label{def:finite_type_perfect_scheme}
A perfect $k$-scheme $X$ is
    \begin{itemize}
        \item \textit{locally of finite type} if exists an \'etale cover $\crbr{U_i}_{i \in I}$ of $X$ such that each $U_i$ is the perfection of an affine scheme,
        \item \textit{perfectly of finite type} if it is locally perfect of finite type and quasi-compact,
        \item \textit{perfectly of finite presentation} if it is perfectly of finite type and quasi-separated.
    \end{itemize}
\end{definition}

By the Cartan decomposition, $\Gr_G$ can be written as the colimit of perfections of projective varieties, called \textit{affine Schubert varieties}:
    \[ \Gr_G = \varinjlim_{\lambda \in X_*(T)_+} \Gr_{\leq \lambda} \]
and that the Schubert varieties are the closure of their maximal Schubert cells:
    \[ \Gr_{\leq \lambda} = \overline{\Gr_{\lambda}} = \bigcup_{\lambda' \leq \lambda} \Gr_{\lambda'}. \]
The inclusion $\Gr_\lambda \subset \Gr_G$ is a locally closed embedding. The $k$-points of the Schubert cells are
    \[ \Gr_\lambda(k) = G(\cO)\varpi^\lambda G(\cO), \]
By definition there is a left action of $LG$ on $\Gr_G$. This restricts to an action of $L^+G$ on $\Gr_{\leq \lambda}$.

\begin{lemma}\label{l:schubert_action_factors}
The action of $L^+G$ on $\Gr_{\leq\lambda}$ factors through $L^hG$ for $h$ large enough.
\end{lemma}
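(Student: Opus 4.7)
The claim is equivalent to showing that, for $h$ sufficiently large, the normal subgroup $K_h := \ker\smbr{L^+G \thra L^hG}$ acts trivially on $\Gr_{\leq\lambda}$. By the Cartan decomposition, every point of $\Gr_{\leq\lambda}$ lies in the $L^+G$-orbit of $[\varpi^{\lambda'}]$ for some dominant $\lambda' \leq \lambda$, and there are only finitely many such $\lambda'$. Since $K_h$ is normal in $L^+G$, equivariance reduces the claim to showing that $K_h$ fixes each $[\varpi^{\lambda'}]$, i.e.\ that $\ad(\varpi^{-\lambda'})(K_h) \subset L^+G$ for every dominant $\lambda' \leq \lambda$.

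To check this, I would analyze $K_h$ via root subgroups. For $h \geq 1$, $K_h$ lies in the preimage of the open Bruhat cell $N^- \times T \times N \subset G$, which (through a filtration by successive congruence layers, compatible with the smoothness of $G$ over $\cO$) identifies $K_h$ as an iterated extension of pieces $K_h^\alpha := K_h \cap L^+ N_\alpha$ and $K_h^T := K_h \cap L^+ T$. Conjugation by $\varpi^{-\lambda'}$ is trivial on $L^+T$ and, on each $L^+N_\alpha \cong L^+\GG_a$, sends $u_\alpha(x) \mapsto u_\alpha\smbr{\varpi^{-\anbr{\alpha,\lambda'}} x}$. Since elements of $K_h^\alpha$ have Witt-vector parameter lying in $\varpi^h W_\cO(R)$, the conjugate stays inside $L^+N_\alpha$ as long as $h \geq \anbr{\alpha,\lambda'}$ (negative roots impose no constraint, as $\lambda'$ is dominant). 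Taking
\[
    h \geq \max\set{\anbr{\alpha,\lambda'} : \alpha \in \Phi_+,\ \lambda' \leq \lambda\ \text{dominant}},
\]
a finite maximum, yields the desired factorization.

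The step I expect to be the main obstacle is making the root-subgroup decomposition of $K_h$ rigorous at the level of $W_\cO(R)$-points for an \emph{arbitrary} perfect $k$-algebra $R$, not merely for geometric points; it is this uniformity that licenses the valuation-shift computation above. I would address it using the pro-smooth structure $L^+G = \varprojlim_h L^hG$ with surjective transition maps, filtering $K_h$ by the successive kernels $\ker(L^+G \to L^{h'}G)$ for $h' \geq h$, whose graded pieces are products of root subgroups and a torus factor on which $\ad(\varpi^{-\lambda'})$ acts by the computation above. A secondary subtlety is that the Cartan decomposition on $R$-points is cruder than on $k$-points, but this is harmless since the factoring statement can be tested after étale base change, reducing to the case of geometric points and henselian neighborhoods thereof.
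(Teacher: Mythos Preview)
The paper's own proof is a one-line citation to \cite[Proposition 1.23]{Zhu14}, so there is little to compare against directly. Your argument is a correct self-contained proof, and in fact your key computation---that $\ad(\varpi^{-\lambda'})(K_h) \subset L^+G$ once $h \geq \max_{\alpha \in \Phi_+}\anbr{\alpha,\lambda'}$ via the root-subgroup decomposition of the congruence kernel---is precisely what the paper later records as \autoref{lem:conjugating_coharacter_congruence_subgroup} (with $r=0$). Zhu's argument, by contrast, typically proceeds by embedding $G \hra \GL_n$ and using the lattice description of $\Gr_{\GL_n,\leq\lambda}$ to bound the action uniformly; your intrinsic approach avoids that embedding at the cost of invoking the product decomposition of $K_h$.

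On your two stated worries: the root-subgroup decomposition of $K_h(R)$ for arbitrary perfect $R$ is simply asserted and used in the paper's proof of \autoref{lem:conjugating_coharacter_congruence_subgroup}, so you are on safe ground there. The ``secondary subtlety'' about Cartan on $R$-points is real but has a clean fix that is simpler than your \'etale-localization sketch: the map $(k,x) \mapsto (kx,x)$ from $K_h \times \Gr_{\leq\lambda}$ to $\Gr_{\leq\lambda} \times \Gr_{\leq\lambda}$ has closed preimage of the diagonal, both source and target are reduced perfect schemes, and $k$-points are dense in $K_h \times \Gr_{\leq\lambda}$ (since $K_h$ is a pro-limit of perfected affine spaces and $\Gr_{\leq\lambda}$ is the perfection of a finite-type $k$-scheme). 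Hence checking triviality of the action on $k$-points, where the Cartan decomposition is available, suffices.
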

\begin{proof}
This is explained in the proof of \cite[Proposition 1.23]{Zhu14}.
\end{proof}

\begin{definition}
\label{def:IC_sheaves}
For $\lambda \in X_*(T)_+$ we let $\cA_\lambda$ denote the intersection cohomology sheaf on $\Gr_{\leq\lambda}$. This is defined as the intermediate extension of $\QQellbar[\anbr{2\rho, \lambda}]((\rho, \lambda))$ on $\Gr_{\lambda}$ to all of $\Gr_{\leq\lambda}$.
\end{definition}
Let $P_{L^+G}(\Gr_G)$ denote the category of $L^+G$-equivariant perverse sheaves on $\Gr_G$, as defined in \cite[Section 2]{Zhu14}. We have
    \[ \cA_\lambda \in P_{L^+G}(\Gr_G), \]
and that the irreducible objects in $P_{L^+G}(\Gr_G)$ are exactly the $\cA_\mu$ for $\mu \in X_*(T)_+$.

The inclusion $N \hra G$ functorially induces an inclusion $\Gr_N \hra \Gr_G$.

\begin{definition}
The \textit{semi-infinite orbit} of a cocharacter $\nu \in X_*(T)$ is
    \[ S_\nu = \varpi^\nu \Gr_N \subset \Gr_G. \]
\end{definition}

\begin{definition}\label{def:MV_cycles}
Let
    \[ \MV_{\lambda,\nu} := \Gr_{\leq\lambda} \cap S_\nu. \]
We write ``MV'' for ``Mirkovi\'c--Vilonen''. In the literature a \textit{Mirkovi\'c--Vilonen cycle} usually refers to an irreducible component of $\MV_{\lambda,\nu}$, but we use MV to denote the whole intersection.
\end{definition}

\begin{lemma}
\label{lem:MV_is_pfp}
For all $\lambda \in X_*(T)_+$ and all $\nu \in X_*(T)$, $\MV_{\lambda,\nu}$ is a pfp perfect $k$-scheme.
\end{lemma}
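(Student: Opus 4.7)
The strategy is to realize $\MV_{\lambda,\nu}$ as a locally closed subscheme of $\Gr_{\leq\lambda}$. Recall that $\Gr_{\leq\lambda}$ is the perfection of a projective $k$-variety by Zhu's construction (\cite{Zhu14}), hence pfp; in particular its underlying Zariski topology is Noetherian (perfection is a homeomorphism on Zariski topology), so every locally closed subscheme is automatically quasi-compact, quasi-separated, and locally perfect of finite type, i.e.\ pfp. It therefore suffices to establish local closedness of the intersection.

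For this I would use a dynamical characterization of $S_\nu$. Choose a regular dominant cocharacter $\eta : \GG_m \to T$, and view it as acting on $\Gr_G$ through the inclusion $T \subset L^+G$; this action preserves the $L^+G$-stable subspace $\Gr_{\leq\lambda}$. Because $\eta$ is regular dominant, it acts with strictly positive weight on each root subgroup $N_\alpha$ for $\alpha \in \Phi_+$, from which a direct computation on loops gives
\[
    \lim_{t \to 0} \eta(t) \cdot (n \varpi^\nu) \;=\; \varpi^\nu
\]
for every $n \in LN$. Thus $S_\nu \cap \Gr_{\leq\lambda}$ is identified with the Bia\l ynicki--Birula attractor of the fixed point $\varpi^\nu$ in $\Gr_{\leq\lambda}$. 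Only the finitely many $\varpi^\mu$ with $\mu \in \Omega(\lambda)$ appear as $T$-fixed points in $\Gr_{\leq\lambda}$, so this gives a finite decomposition of $\Gr_{\leq\lambda}$ into attractors, and a Bia\l ynicki--Birula-type theorem for $\GG_m$-actions on perfections of projective varieties ensures each attractor is locally closed, as required.

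The main anticipated obstacle is running the attractor formalism in the mixed characteristic setting. In equal characteristic one could parametrize $S_\nu$ explicitly by intersecting with the opposite semi-infinite orbits furnished by the Birkhoff decomposition $LG = L^-G \cdot L^+G$, but as emphasized in the introduction no such decomposition is available here. The dynamical viewpoint sidesteps this issue, requiring only a Bia\l ynicki--Birula theorem in the perfect setting; for the perfection of a projective variety this reduces to the classical statement, since the underlying topological space is unaffected by perfection and the $\GG_m$-action on $\Gr_{\leq\lambda}$ lifts the one on its non-perfect model.
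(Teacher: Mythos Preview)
Your proposal is correct and follows essentially the same architecture as the paper: both reduce to showing that $\MV_{\lambda,\nu}$ is a locally closed subscheme of $\Gr_{\leq\lambda}$, then use that $\Gr_{\leq\lambda}$ is the perfection of a projective variety (hence has Noetherian underlying space) to conclude quasi-compactness, quasi-separatedness, and local perfect finite type.

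The only difference is in how local closedness is justified. The paper simply cites \cite[Proposition~1.20]{Zhu14} for the fact that $S_\nu \hookrightarrow \Gr_G$ is a locally closed immersion of ind-schemes and then base-changes along $\Gr_{\leq\lambda} \hookrightarrow \Gr_G$. You instead work directly on $\Gr_{\leq\lambda}$ and appeal to the Bia\l ynicki--Birula attractor description for the $\GG_m$-action through a regular dominant cocharacter. This is not really a different idea: Zhu's Proposition~1.20 is itself proved by exactly this attractor/repeller mechanism, so you have effectively unpacked the citation rather than found an alternative route. Your version has the virtue of being self-contained; the paper's has the virtue of brevity.
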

\begin{proof}
Consider the diagram
\[
\begin{tikzcd}
    \MV_{\lambda,\nu} \ar[r] \ar[d] \ar[dr, phantom , "\lrcorner"] & S_\nu \ar[d] \\
     \Gr_{\le \lambda} \ar[r]  & \Gr_G
\end{tikzcd}
\]
As explained in \cite[Proposition 1.20]{Zhu14} (cf. \cite[p.173]{Beilinson1996QuantizationOH}), $S_\nu \to \Gr_G$ is a locally closed immersion of perfect ind-schemes, and thus $\MV_{\lambda,\nu} \to \Gr_{\leq\lambda}$ is a locally closed immersion of schemes. Now $\Gr_{\leq\lambda}$ is the perfection of a projective algebraic $k$-variety, and is therefore a pfp perfect scheme, so the result follows from the following two observations:
\begin{enumerate}
    \item $\MV_{\lambda,\nu}$ is quasi-compact and quasi-separated. To see this, note that the underlying topological space of $\Gr_{\le \lambda}$ is Noetherian, since $\Gr_{\leq\lambda}$ is the perfection of a finite type $k$-variety and perfection preserves underlying topological spaces. Now any subspace of Noetherian space is quasi-compact and quasi-separated by \cite[\href{https://stacks.math.columbia.edu/tag/0052}{Lemma 0052}]{stacks-project}.
    \item The property of being locally perfectly of finite type is preserved under base change along open and closed immersions, hence for locally closed immersions as well. \qedhere
\end{enumerate}
\end{proof}

\begin{p}[{{\cite[Cor 2.8]{Zhu14}}}]
\label{p:MV_nonempty}
If $\lambda \in X_*(T)_+$ and $\nu \in X_*(T)$ then
\[
    \MV_{\lambda,\nu} \neq \varnothing \Leftrightarrow \nu \in \Omega(\lambda)
\]
and is equidimensional of rank $\anbr{\rho, \nu+\lambda}$.
\end{p}

\subsection{Character sheaf}
\label{character_sheaf}

Fix, once and for all, an additive character
    \[ \psi: F_0 \to F_0/\cO_0 \to \QQellbar^\times \]
such that\footnote{This conductor will simplify the rest of the arguments and does not amount to any loss of generality in \autoref{t:main_theorem}.} $\psi(p^{-1}\cO) \neq 1$.

In order to geometrize the additive character and consider Whittaker sheaves, we first consider the natural map
\begin{equation}
\label{eq:the_h_map}
    h_\mu: LN \xra{\ad \varpi^\mu} LN \to LN/[LN,LN] \xra\sim \prod_{\alpha \in \Phi_+} L\GG_a \xra+ L\GG_a \to L\GG_a/L^+\GG_a.
\end{equation}
In the existing proofs of the geometric Casselman--Shalika formula in equal characteristic (\cite{FGV99}, \cite{NP00}), the character sheaf is induced from a \textit{residue map}, which is defined by taking
\begin{align*}
    \res: L\GG_a &\to \GG_a \\
    \sum_{n \geq -M} a_n t^n &\mapsto a_{-1},
\end{align*}
which on the level of $k$-points is a group homomorphism $k((t)) \to k$. But in mixed characteristic this doesn't make sense, since there is no nontrivial group homomorphism $\QQ_p \to \FF_p$. Moreover, there is no choice of nontrivial $\psi$ which is supported on a finite subset of $\QQ_p$.

This naturally induces a map on the semi-infinite orbit $S_\nu$ as follows:

\begin{lemma}\label{lemma:definition_of_h_map}
If $\mu \in X_\bullet(T)$ is a character such that $\mu + \nu$ is dominant, then $h$ induces a map
    \[ h_\mu^\nu: S_\nu \to L\GG_a/L^+\GG_a. \]
which on $k$-points yields
\begin{align*}
    N(F) \varpi^\nu N(\cO) / N(\cO) &\to F/\cO \\
    n \varpi^\nu N(\cO) &\mapsto h(\ad(\varpi^\mu)(n)).
\end{align*}

\end{lemma}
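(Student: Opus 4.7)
The strategy is to realize $S_\nu$ as the étale sheaf quotient $LN/\Stab_{LN}(\varpi^\nu)$ and check that the homomorphism $h_\mu: LN \to L\GG_a/L^+\GG_a$ of \eqref{eq:the_h_map} vanishes on this stabilizer, whence it descends to the desired map $h_\mu^\nu$. Observe first that $h_\mu$ really is a homomorphism of sheaves of abelian groups, being the composition of $\ad(\varpi^\mu)$, the abelianization $LN \thra LN/[LN,LN]$, the canonical isomorphism $LN/[LN,LN] \xra\sim \bigoplus_{\alpha \in \Delta} L\GG_a$, the sum map $\bigoplus_{\alpha \in \Delta}L\GG_a \to L\GG_a$, and the projection $L\GG_a \thra L\GG_a/L^+\GG_a$.

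First, I would identify the stabilizer. Since $N \hra G$ is a closed immersion we have $LN \cap L^+G = L^+N$, whence $\Stab_{LN}(\varpi^\nu) = \varpi^\nu L^+N \varpi^{-\nu}$. Combined with the scheme-theoretic product decomposition $L^+N \xra\sim \prod_{\alpha \in \Phi_+} L^+N_\alpha$ and the identity $\varpi^\nu n_\alpha(x)\varpi^{-\nu} = n_\alpha(\varpi^{\anbr{\alpha,\nu}}x)$, this gives
\[
    \Stab_{LN}(\varpi^\nu) = \prod_{\alpha \in \Phi_+} N_\alpha\smbr{\varpi^{\anbr{\alpha,\nu}} L^+\GG_a}.
\]
Next I would evaluate $h_\mu$ on an element $n_0$ of this stabilizer. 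Because any non-simple positive root is a sum of two positive roots, the root subgroups $N_\alpha$ for $\alpha \in \Phi_+ \setminus \Delta$ lie in $[LN,LN]$ and are therefore killed by the abelianization. Conjugation by $\varpi^\mu$ rescales the $\alpha$-component by $\varpi^{\anbr{\alpha,\mu}}$, so the simple root components of $\ad(\varpi^\mu)(n_0)$ lie in $\varpi^{\anbr{\alpha,\mu+\nu}}L^+\GG_a$. Their sum lies in $L^+\GG_a$ precisely because $\anbr{\alpha,\mu+\nu} \ge 0$ for every $\alpha \in \Delta$, which is the hypothesis that $\mu+\nu$ is dominant. Hence $h_\mu(n_0) = 0$ in $L\GG_a/L^+\GG_a$, and the descended map $h_\mu^\nu: S_\nu \to L\GG_a/L^+\GG_a$ has the $k$-point formula stated in the lemma.

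The only technical nuisance is justifying the descent along $LN \to S_\nu$; this follows from the fact that $S_\nu$ is by definition the $LN$-orbit of $\varpi^\nu$, together with the locally closed immersion $S_\nu \hra \Gr_G$ cited from \cite[Proposition 1.20]{Zhu14} in the proof of \autoref{lem:MV_is_pfp}, which together identify $S_\nu$ with the étale sheaf quotient $LN/\Stab_{LN}(\varpi^\nu)$.
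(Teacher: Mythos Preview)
Your proof is correct and follows essentially the same approach as the paper. The paper writes $S_\nu = (\varpi^\nu LN)/L^+N$ and checks directly that $\varpi^\nu n \mapsto h(\ad(\varpi^{\mu+\nu})(n))$ is well-defined on $L^+N$-cosets, using only the one-line observation that $\ad(\varpi^{\mu+\nu})(L^+N) \subset L^+N$ when $\mu+\nu$ is dominant; you instead use the orbit-stabilizer description $S_\nu = LN/(\varpi^\nu L^+N\varpi^{-\nu})$ and verify the same fact via an explicit root-subgroup decomposition. One minor quibble: $LN$ is not abelian, so $h_\mu$ is a homomorphism of sheaves of groups (with abelian target), not of abelian groups.
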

\begin{proof}
Note that $S_\nu = \varpi^\nu\Gr_N = (\varpi^\nu LN)/L^+N$. But this is the \'etale sheafification of the na\"ive quotient of presheaves. So for $R$ a perfect $k$-algebra we define
\begin{align*}
    (\varpi^\nu LN(R)) / L^+N(R) &\to L\GG_a(R) / L^+\GG_a(R) \\
    \varpi^\nu n \mod L^+N(R) &\mapsto h(\ad(\varpi^{\mu+\nu})(n)).
\end{align*}
To see that this is well-defined, suppose $\varpi^\nu n L^+N(R) = \varpi^\nu m L^+N(R)$. Then $n^{-1}m \in L^+N(R)$, but $\mu+\nu$ is dominant so $\ad(\varpi^{\mu+\nu})(n^{-1}m) \in L^+N(R)$, which maps to $L^+\GG_a(R)$ under the group homomorphism $h$. This is clearly functorial and extends to a morphism of presheaves, which we then sheafify. The statement on $k$-points follows noting that $\varpi^\nu$ normalizes $N(F)$.
\end{proof}

We want to turn the nontrivial additive character
    \[ \psi: F_0 \to F_0/\cO_0 \to \QQellbar^\times \]
into a character sheaf
on
    \[ \Gr_{\GG_a} := L\GG_a/L^+\GG_a \]
(whose $k$ points are exactly $F/\cO$) and pull it back along $h_\mu^\nu$. However, $\Gr_{\GG_a}$ is a group ind-scheme, and a geometric version of $\psi$ on $\Gr_{\GG_a}$ would have to be supported on every point of $\Gr_{\GG_a}$. While there probably exists a formalism that allows for such sheaves, we will avoid this and construct a family of compatible characters supported on finite pieces, which is enough for our purposes.

\begin{definition}
If $H$ is a smooth affine group scheme over $\cO$ and $s \in \ZZ$, we let $L^{\geq s}H$ denote the image of $L^+H$ under the isomorphism
    \[ LH \xra{\cdot \varpi^s} LH. \]
\end{definition}

For $s > 0$ it's clear that the natural embedding $L^+H \to LH$ factors through $L^{\geq -s}H$, so we can form the quotient
    \[ L^{\geq s}H/L^+H, \]
which is isomorphic to $L^sH$. In particular, $\displaystyle \Gr_{\GG_a} \simeq \varinjlim_s L^{\ge -s}\GG_a/ L^+\GG_a$.

\begin{lemma}
\label{lem:action_factoring_through_finite_part}
If $\lambda \in X_*(T)_+$ and and $\nu \in X_*(T)$, there is a factorization
    \[ \begin{tikzcd}
        \MV_{\lambda,\nu} \rar[dashed]{h_\mu^{\lambda,\nu}} \dar[hook] & L\GG_a^{\geq-s}/L^+\GG_a \dar[hook] \\
        S_\nu \rar[swap]{h_\mu^\nu} & L\GG_a/L^+\GG_a
    \end{tikzcd} \]
where $s > 0$ is some large enough positive integer.
\end{lemma}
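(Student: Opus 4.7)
The approach is to exploit the quasi-compactness of $\MV_{\lambda,\nu}$ together with the natural ind-scheme presentation of $L\GG_a/L^+\GG_a$.

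First I would recall that $L\GG_a$ is naturally an ind-scheme with ind-structure $L\GG_a = \varinjlim_s L^{\geq -s}\GG_a$. For each $s \geq 0$ there is an isomorphism $\cdot\varpi^s : L^{\geq -s}\GG_a \xra\sim L^+\GG_a$ which carries the subgroup $L^+\GG_a$ onto $L^{\geq s}\GG_a$, inducing an isomorphism
\[
    L^{\geq -s}\GG_a/L^+\GG_a \xra\sim L^+\GG_a/L^{\geq s}\GG_a \simeq L^s\GG_a,
\]
which is a perfect scheme of finite type over $k$. The transition maps $L^{\geq -s}\GG_a/L^+\GG_a \hookrightarrow L^{\geq -s-1}\GG_a/L^+\GG_a$ correspond under this identification to the evident closed immersions $L^s\GG_a \hookrightarrow L^{s+1}\GG_a$. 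Hence $L\GG_a/L^+\GG_a = \varinjlim_s L^{\geq -s}\GG_a/L^+\GG_a$ is an ind-(pfp perfect scheme) along closed immersions.

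Next, by \autoref{lem:MV_is_pfp}, $\MV_{\lambda,\nu}$ is a pfp perfect scheme and is therefore quasi-compact. Pulling back the closed immersion $L^{\geq -s}\GG_a/L^+\GG_a \hookrightarrow L\GG_a/L^+\GG_a$ along the restriction of $h_\mu^\nu$ to $\MV_{\lambda,\nu}$ produces a closed subscheme $Z_s \hookrightarrow \MV_{\lambda,\nu}$. By construction $Z_s \subset Z_{s+1}$ and $\bigcup_s Z_s = \MV_{\lambda,\nu}$, so quasi-compactness forces this ascending chain to stabilize: some $Z_s$ equals $\MV_{\lambda,\nu}$, which is exactly the asserted factorization through $L^{\geq -s}\GG_a/L^+\GG_a$.

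The main point requiring care is the verification that $L\GG_a/L^+\GG_a$ really is presented as a filtered colimit of pfp perfect schemes along closed immersions; this needs a small check against the étale sheafification defining the quotient, but follows cleanly from the explicit identification with $L^s\GG_a$ above. Once that is in hand, the argument is entirely formal: a morphism from a quasi-compact scheme to an ind-scheme presented as a filtered colimit along closed immersions factors through some finite stage.
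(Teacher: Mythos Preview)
Your proposal is correct and follows essentially the same approach as the paper: both argue that $\MV_{\lambda,\nu}$ is quasi-compact and that a map from a quasi-compact scheme to the ind-scheme $L\GG_a/L^+\GG_a = \varinjlim_s L^{\geq -s}\GG_a/L^+\GG_a$ must factor through a finite stage. The paper's proof is a one-liner invoking exactly this principle, while you supply more detail on the ind-presentation; one small quibble is that the phrase ``quasi-compactness forces this ascending chain to stabilize'' is not literally correct (quasi-compact spaces need not be Noetherian), but the intended conclusion follows cleanly either from the Noetherianness of the underlying space of $\MV_{\lambda,\nu}$ (cf.\ \autoref{lem:MV_is_pfp}) or, more directly, from the standard fact that $\Hom(X,\varinjlim_s Y_s)=\varinjlim_s\Hom(X,Y_s)$ for $X$ quasi-compact and $Y_s$ an ind-system along closed immersions.
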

\begin{proof}
Note $\MV_{\lambda,\nu}$ is a subscheme of $\Gr_{\leq\lambda}$, which is the perfection of a projective variety over $k$, by the results of \cite{WittProjectivity}, and is therefore quasi-compact over $k$. So the morphism to the ind-scheme
    \[ L\GG_a/L^+\GG_a = \varinjlim_s L^{\geq-s}\GG_a/L^+\GG_a \]
must factor through one of the $L\GG_a^{\geq-s}/L^+\GG_a$.
\end{proof}

\begin{lemma}\label{lem:GrGa-Shift}
The quotient $L^{\geq-s}\GG_a/L^+\GG_a$ is represented by a pfp perfect group scheme and its $k$-points are natukirally identified with $\varpi^{-s}\cO/\cO$.
\end{lemma}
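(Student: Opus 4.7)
The plan is to reduce everything to the truncated loop space $L^s\GG_a$, which was already identified in the excerpt as an avatar of the quotient. The key tool is that multiplication by $\varpi^s$ gives an automorphism of $L\GG_a$ (as presheaves on $\Aff_k^{\pf}$) that carries the pair $L^+\GG_a \subset L^{\geq-s}\GG_a$ isomorphically onto the pair $L^{\geq s}\GG_a \subset L^+\GG_a$. Since this automorphism preserves the subgroup inclusion, it descends to an isomorphism of presheaf quotients
\[
L^{\geq-s}\GG_a/L^+\GG_a \;\xrightarrow{\,\sim\,}\; L^+\GG_a/L^{\geq s}\GG_a,
\]
so it suffices to analyze the right-hand side.

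Next I would identify $L^+\GG_a/L^{\geq s}\GG_a$ with $L^s\GG_a$. For any perfect $k$-algebra $R$, unwinding the definitions gives $L^+\GG_a(R)=W_\cO(R)$ and $L^{\geq s}\GG_a(R)=\varpi^s W_\cO(R)$, so the naive quotient of presheaves is $W_\cO(R)/\varpi^s W_\cO(R) = W_{\cO,s}(R) = L^s\GG_a(R)$. Since $L^s\GG_a$ is already a sheaf, this presheaf identification is automatically the étale sheafification, so no further sheafification is needed.

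Having made this identification, the first claim follows directly from \autoref{def:standard_constructions}: $L^s\GG_a$ is a perfect $k$-scheme, and the underlying scheme $W_{\cO,s}\otimes_{W(\FF_q)}k$ is the perfection of a finite-dimensional affine space with the Witt vector addition as group law. Thus it is quasi-compact, quasi-separated and locally perfectly of finite type, hence pfp in the sense of \autoref{def:finite_type_perfect_scheme}, and it is a group scheme because the quotient of the two group subfunctors of $L\GG_a$ is naturally a group.

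Finally, on $k$-points, $L^{\geq-s}\GG_a(k)=\varpi^{-s}W_\cO(k)=\varpi^{-s}\cO$ and $L^+\GG_a(k)=\cO$, giving $\varpi^{-s}\cO/\cO$ directly. I expect no real obstacle here; the only subtlety worth flagging is verifying that the étale sheafification does not alter the presheaf quotient, which is handled automatically by the fact that the target $L^s\GG_a$ is already representable.
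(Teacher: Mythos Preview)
Your proposal is correct and takes essentially the same approach as the paper: both establish an isomorphism $L^{\geq -s}\GG_a/L^+\GG_a \xra\sim L^s\GG_a$ via multiplication by $\varpi^s$, then invoke that $L^s\GG_a$ is the perfection of a finite-type affine group scheme. The paper writes the map explicitly on Teichm\"uller expansions while you factor it as $L^{\geq -s}\GG_a/L^+\GG_a \xra\sim L^+\GG_a/L^{\geq s}\GG_a \xra\sim L^s\GG_a$, but the content is identical; your remark that the presheaf quotient already coincides with the sheaf quotient (because the target is representable and the map $W_\cO(R) \to W_{\cO,s}(R)$ is surjective) is a clean way to handle the sheafification step that the paper leaves implicit.
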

\begin{proof}
We exhibit an isomorphism $L^{\geq-s}\GG_a/L^+\GG_a \xra\sim L^s\GG_a$. If $R$ is a perfect $k$-algebra, we can define an isomorphism of group-valued presheaves
\begin{align*}
    L^{\geq-s}\GG_a(R)/L^+\GG_a(R) &\to L^s\GG_a(R) \\
    \sum_{i=-s}^{-1} [r_i]\varpi^i & \mapsto \sum_{i=0}^{s-1} [r_{i-s}] \varpi^i
\end{align*}
and then take the sheafification. We conclude by noting that $L^s\GG_a$ is the perfection of the finite type group scheme $L^s_p\GG_a$ whose $k$-points are $\cO/\varpi^s\cO$.
\end{proof}

\begin{proposition}[{{\cite[Section 5]{Lusztig2006}}}]
\label{cor:lustzig_character_sheaves}
There is a unique rank 1 $\QQellbar$-local system $\cL_{\psi,s}$ on $L^{\geq-s}\GG_a/L^+\GG_a$ such that
\begin{enumerate}
    \item $a^*\cL_{\psi,s} \cong \cL_{\psi,s} \boxtimes \cL_{\psi,s}$, where $a$ is the addition map on $L^{\geq-s}\GG_a/L^+\GG_a$, where $a$ is the additive structure of $L^{\ge -s}\GG_a/ L^+\GG_a$, and
    \item With respect to the relative Frobenius on $L^{\geq -s}\GG_a/L^+\GG_a$ (coming from $\GG_{a,\FF_q}$), the value of the trace of Frobenius at the stalk of $\cL_{\psi,s}$ at $g \in \varpi^{-s}\cO_0/\cO_0$ is $\psi(g)$.
\end{enumerate}
\end{proposition}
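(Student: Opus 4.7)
The strategy is to reduce to a finite-type setting where the classical Lang-isogeny construction of Artin--Schreier-type character sheaves applies. By \autoref{lem:GrGa-Shift}, there is an isomorphism $L^{\geq -s}\GG_a/L^+\GG_a \xra\sim L^s\GG_a$ of group objects, where $L^s\GG_a$ is the perfection of the connected commutative unipotent algebraic group $H := L^s_p\GG_a$ over $\FF_q$ (the truncated Witt-vector-like group for $\cO_0$), whose $\FF_q$-points are $\cO_0/\varpi^s \cO_0$. Under this identification, the $k$-points $\varpi^{-s}\cO_0/\cO_0$ correspond to $\cO_0/\varpi^s\cO_0$ via a shift by $\varpi^s$, and the character $\psi|_{\varpi^{-s}\cO_0/\cO_0}$ transfers to a character $\chi: H(\FF_q) \to \QQellbar^\times$. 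Because the category of $\ell$-adic sheaves is invariant under perfection (\cite[Appendix A]{Zhu14}), it suffices to construct and characterize $\cL_{\psi,s}$ on $H$ itself.

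For existence, I would apply the Lang isogeny construction. The Lang map $L_H: H \to H$, $x \mapsto \Frob_q(x) - x$, is a finite étale Galois cover of connected commutative group schemes with Galois group $H(\FF_q)$. Define $\cL_{\psi,s}$ to be the $\chi$-isotypic summand of $(L_H)_*\QQellbar$, which is a rank one $\QQellbar$-local system on $H$. Multiplicativity follows from the fact that $L_H$ is a group homomorphism and that the addition map on $H$ commutes with Frobenius: the pullback of the Lang torsor under $a$ is canonically isomorphic to the fiber product of two copies, and tracking the $H(\FF_q)$-action shows $a^*\cL_{\psi,s} \cong \cL_{\psi,s} \boxtimes \cL_{\psi,s}$. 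The trace condition is the standard calculation for Lang torsors: the stalk at $g \in H(\FF_q)$ receives a canonical Frobenius with trace $\chi(g) = \psi(g)$.

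For uniqueness, given two local systems $\cL, \cL'$ satisfying (1) and (2), the local system $\cM := \cL \otimes (\cL')^{\vee}$ is again a rank one multiplicative local system, and its trace of Frobenius at every $g \in H(\FF_q)$ equals $1$. By the classification of rank one multiplicative local systems on a connected commutative group scheme over $\FF_q$ (a consequence of Lang's theorem: every such local system arises from a character of the finite group $H(\FF_q)$ via the Lang isogeny, injectively), the triviality of the associated character forces $\cM$ to be constant, hence $\cL \cong \cL'$.

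The only subtlety is the passage through perfection and the identification of $L^{\geq -s}_p\GG_a/L^+_p\GG_a$ with a connected commutative algebraic group over $\FF_q$ of finite type; once this is arranged (as in \autoref{lem:GrGa-Shift}), the argument is essentially the standard construction recalled in \cite[Section 5]{Lusztig2006}, and the main "work" is bookkeeping to match the conductor of $\psi$ with the level $s$ of the truncation.
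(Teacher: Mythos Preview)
Your proposal is correct and in fact more detailed than what the paper provides: the paper does not give a proof at all, but simply cites \cite[Section 5]{Lusztig2006} (and \cite{deshpande2023character}) together with the remark that the \'etale site is insensitive to perfection, so Lusztig's finite-type construction transfers to the pfp perfect group scheme $L^{\geq -s}\GG_a/L^+\GG_a$ without modification. Your Lang-isogeny argument is exactly the construction underlying Lusztig's result, and your reduction via \autoref{lem:GrGa-Shift} and perfection-invariance is the same bridge the paper invokes.
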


Lusztig did not consider perfections of finite type group schemes in his work, but since the \'etale site is insensitive to perfection and $L^{\geq-s}\GG_a/L^+\GG_a$ is a pfp perfect group scheme, the theorem applies to $L^{\geq-s}\GG_a/L^+\GG_a$ without any further work. See \cite[Theorem 2.9]{deshpande2023character} for another account of this.

Lastly, if $t > s$ there is an inclusion
    \[ \iota: L^{\geq-s}\GG_a/L^+\GG_a \hra L^{\geq-t}\GG_a/L^+\GG_a \]
and $\iota^*\cL_{\psi,r} \simeq  \cL_{\psi,r}$. We will thus abusively denote $\cL_\psi$ as the character sheaf on any of the finite pieces $L^{\ge -s}\GG_a/ L^+\GG_a $.

\newpage

\section{Vanishing of cohomology for non-dominant \texorpdfstring{$\mu$}{mu}}
\label{sec:non_dominant_case}
In this section, we verify \autoref{conj:full_CS} when $\mu \in X_*(T)$ is not dominant.

By \autoref{l:schubert_action_factors} the $L^+G$-action on $\Gr_{\leq \lambda}$ factors through $L^hG$ for some large enough $h > 0$. Therefore, the $L^+N$-action on $\MV_{\lambda,\mu}$ factors through $L^hN$ as well. A direct computation shows that the map $h_\mu|_{L^+N}: L^+N \to L\GG_a/L^+\GG_a$ lands in $L^{\geq -s}\GG_a/L^+\GG_a$ for large enough $s$, and that it factors as
    \[ h_\mu|_{L^+N}: L^+N \thra L^hN \to L^{\geq -s}\GG_a/L^+\GG_a \hra L\GG_a/L^+\GG_a \]
for large enough $h$.

\begin{lemma}
\label{lem:morphism_k_points}
Suppose $X$ and $Y$ are pfp perfect $k$-schemes. If $f, g: X \to Y$ are two morphisms such that $f(k), g(k): X(k) \to Y(k)$ are equal, then $f = g$.
\end{lemma}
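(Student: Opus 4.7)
The plan is to identify $f$ and $g$ through the equalizer of morphisms of schemes, exploiting that the $k$-points of $X$ are dense. First I record two preliminary facts. Since perfect schemes over a perfect field are reduced, $X$ is reduced. Moreover, $X$ being pfp perfect means it is the perfection of a finite type $k$-scheme $X_0$; perfection preserves underlying topological spaces, and since $X_0$ is Jacobson and $k = \overline\FF_q$ is algebraically closed, the closed points of $X_0$ coincide with $X_0(k) = X(k)$ and form a dense subset. Consequently, $X(k)$ is dense in $X$, and any open subscheme of $X$ containing $X(k)$ must equal $X$.

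Next I form the equalizer $Z := X \times_{Y \times_k Y} Y$ with respect to $(f,g): X \to Y \times_k Y$ and the diagonal $\Delta_Y$. Since $Y$ is a scheme, $\Delta_Y$ is a locally closed immersion, factoring as a closed immersion $Y \hookrightarrow U$ into some open $U \subseteq Y \times_k Y$. Pulling back yields an open subscheme $X' := (f,g)^{-1}(U) \subseteq X$ together with a closed immersion $Z \hookrightarrow X'$. I then argue in two steps. First, $X' = X$: for each $x \in X(k)$ the hypothesis gives $f(x) = g(x)$, so $(f(x), g(x)) \in \Delta_Y(Y) \subseteq U$, whence $X' \supseteq X(k)$, and by the preceding remark $X' = X$. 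Second, $Z = X$: the closed immersion $Z \hookrightarrow X$ contains the dense subset $X(k)$ and hence is surjective on topological spaces; combined with reducedness of $X$ this forces $Z \hookrightarrow X$ to be an isomorphism, and the universal property of the equalizer yields $f = g$.

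The only subtle step is handling the fact that $Y$ is not assumed separated, so that $\Delta_Y$ is only locally closed rather than closed; this is what necessitates the two-stage passage $X(k) \subseteq X' = X$ before one can conclude $Z = X$. Everything else is routine density-and-reducedness bookkeeping.
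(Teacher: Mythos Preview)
Your proof is correct and takes a genuinely different route from the paper's. The paper deperfects: it invokes \cite[Proposition A.15, A.17]{Zhu14} to produce reduced finite-type models $X',Y'$ with $(X')^{\pf}=X$, $(Y')^{\pf}=Y$, and (after a Frobenius twist of the $k$-structure) morphisms $f',g':X'\to Y'$ whose perfections are $f,g$; since $k$-points are unchanged by perfection, one reduces to the classical fact that morphisms of reduced finite type schemes over an algebraically closed field are determined by their $k$-points. Your argument instead stays entirely at the level of perfect schemes, using directly that perfect $\FF_p$-schemes are reduced and that $X(k)$ coincides with the closed points of the Jacobson space $|X|$; the equalizer then does the work. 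Your approach is more self-contained in that it avoids the somewhat delicate deperfection of \emph{morphisms} in \cite[A.17]{Zhu14} (with its Frobenius twist), though you still implicitly rely on \cite[A.15]{Zhu14} when you assert that a pfp perfect scheme is globally the perfection of a finite type scheme---this does not follow immediately from the local definition of pfp, so a citation would be appropriate (alternatively, Jacobsonness and the identification of closed points with $k$-points can be checked \'etale-locally, bypassing the global model). Your careful handling of the non-separated case via the two-step argument $X(k)\subseteq X'=X$ is a nice touch that the paper's reduction to the classical case sidesteps.
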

\begin{proof}
In \cite[Proposition A.15]{Zhu14} Zhu defines models $X'$ and $Y'$ for $X$ and $Y$, respectively, so that $X',Y'$ are weakly normal (so in particular reduced) finitely presented $k$-schemes satisfying $(X')^{\pf} = X$ and $(Y')^{\pf} = Y$. Then, as explained in \cite[Proposition A.17]{Zhu14}, after possibly twisting the $k$-structure on $X'$ by a high enough power of the Frobenius on $k$ (i.e. replacing the structure map $X' \to \Spec k$ with $X' \xra{\sigma^m} X' \to k$ for some large enough $m$) there exists $k$-morphisms $f', g': X' \to Y'$ such that $(f')^{\pf} = f$ and $(g')^{\pf} = g$. Since $k$ is perfect, the projections $X(k) \to X'(k)$ and $Y(k) \to Y'(k)$ are bijective, and it is clear that $f'(k) = f(k) = g(k) = g'(k)$.

Now observe that $f'$ and $g'$ are morphisms of reduced finite type $k$-schemes over an algebraically closed field. Therefore, $f'(k) = g'(k)$ implies $f' = g'$, so $f = g$.
\end{proof}

\begin{proposition}
\label{p:finite_action_mv_cycles}
Choose $s$ such that $h_\mu|_{L^+N}$ and $h_\mu^{\lambda,\nu}$ both factor through $L^{\geq -s}\GG_a/L^+\GG_a \to L\GG_a/L^+\GG_a$. Then the following diagram commutes:
\[ \begin{tikzcd}
    L^+N  \times \MV_{\lambda,\nu} \rar{\act} \ar[d] & \MV_{\lambda,\nu} \dar[equals] \\
    L^hN \times \MV_{\lambda,\nu} \rar{\act} \dar[swap]{h_\mu \times h_\mu^{\lambda,\nu}} & \MV_{\lambda,\nu} \dar{h_\mu^{\lambda,\nu}} \\
    L^{\geq -s}\GG_a/L^+\GG_a \times L^{\geq -s}\GG_a/L^+\GG_a \rar{+} \ar[d]  & L^{\geq -s}\GG_a/L^+\GG_a  \ar[d] \\
    L\GG_a / L^+\GG_a \times L\GG_a / L^+\GG_a \rar{+} & L\GG_a/ L^+\GG_a
\end{tikzcd} \]
\end{proposition}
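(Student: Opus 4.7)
The plan is to treat the three stacked squares of the diagram separately, since the top and bottom squares are essentially formal and only the middle square carries content.

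The top square commutes by \autoref{l:schubert_action_factors}: it says precisely that the $L^+N$-action on $\MV_{\lambda,\nu}$, which is inherited from the $L^+G$-action on $\Gr_{\leq\lambda}$ (and which indeed preserves $\MV_{\lambda,\nu}$ since $LN$ preserves each $S_\nu$ by left multiplication), factors through $L^hN$ for $h$ large enough. The bottom square commutes because the inclusion $L^{\geq-s}\GG_a/L^+\GG_a \hra L\GG_a/L^+\GG_a$ is a homomorphism of commutative group-valued sheaves, so it intertwines the two addition maps.

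For the middle square, I would reduce to $k$-points via \autoref{lem:morphism_k_points}. The hypotheses apply: $L^hN$ is the perfection of a finite type affine $k$-scheme (since $N$ is smooth affine of finite type over $\cO$), $\MV_{\lambda,\nu}$ is pfp by \autoref{lem:MV_is_pfp}, and $L^{\geq-s}\GG_a/L^+\GG_a$ is pfp by \autoref{lem:GrGa-Shift}, so both the target and the source of the two morphisms to be compared are pfp perfect $k$-schemes. On $k$-points, pick $n_0 \in N(\cO) = L^+N(k)$ with image $\bar n_0 \in L^hN(k)$, and write a point of $\MV_{\lambda,\nu}(k) \subset S_\nu(k)$ as $m\varpi^\nu N(\cO)$ for some $m \in N(F)$. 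Applying the top-then-right composite and invoking \autoref{lemma:definition_of_h_map} gives
\[ h_\mu^{\lambda,\nu}\bigl(n_0 m \varpi^\nu N(\cO)\bigr) = h\bigl(\ad(\varpi^\mu)(n_0 m)\bigr), \]
while the left-then-bottom composite gives
\[ h_\mu(\bar n_0) + h_\mu^{\lambda,\nu}\bigl(m \varpi^\nu N(\cO)\bigr) = h\bigl(\ad(\varpi^\mu)(n_0)\bigr) + h\bigl(\ad(\varpi^\mu)(m)\bigr). \]
These agree because $h \circ \ad(\varpi^\mu)\colon LN \to L\GG_a/L^+\GG_a$ is a group homomorphism, being a composition of the inner automorphism $\ad(\varpi^\mu)$, the abelianization $LN \to LN/[LN,LN]$, an isomorphism to a product of copies of $L\GG_a$, the sum map, and the quotient by $L^+\GG_a$.

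I don't expect a serious obstacle; the only bookkeeping points to verify carefully are that the chosen $s$ and $h$ can be taken large enough so that all the asserted factorizations hold simultaneously (which is stated at the beginning of the proposition), and that each of the constituent maps composing $h$ really is a homomorphism, so that the key identity $h_\mu(n_0 m) = h_\mu(n_0) + h_\mu(m)$ holds functorially in a perfect $k$-algebra $R$ and thus at the level of $k$-points.
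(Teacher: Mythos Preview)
Your proof is correct and follows the same approach as the paper: the top and bottom squares are handled by construction, and the middle square is reduced to $k$-points via \autoref{lem:morphism_k_points}, where you have spelled out the straightforward diagram chase that the paper only alludes to.
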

\begin{proof}
The top and bottom diagrams commute by construction. Each of $L^hN$ and $L^{-s}\GG_a/L^+\GG_a$ and $\MV_{\lambda,\nu}$ is a pfp perfect $k$-scheme, so by \autoref{lem:morphism_k_points} it suffices to check that the middle diagram commutes on the level of $k$-points. This is a straightforward diagram chase.
\end{proof}

\begin{corollary}
\label{c:nondominant_vanishing}
If $\lambda \in X_*(T)$ is dominant and $\mu,\nu \in X_*(T)$ are such that $\mu$ is not dominant but $\mu+\nu$ is dominant, then
    \[ R\Gamma_c(\MV_{\lambda,\nu}, \cA_\lambda \otimes (h_\mu^{\lambda,\nu})^*\cL_\psi) = 0. \]
\end{corollary}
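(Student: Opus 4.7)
I would exhibit a nontrivial twisted equivariance of $\cF := \cA_\lambda \otimes (h_\mu^{\lambda,\nu})^*\cL_\psi$ under the $L^hN$-action on $\MV_{\lambda,\nu}$, and then apply a standard vanishing lemma for character-sheaf-equivariant cohomology.

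The first task is to show that the character sheaf $\chi := h_\mu^*\cL_\psi$ on $L^hN$ is nontrivial. Since $\mu$ is not dominant, I pick a simple root $\alpha \in \Delta$ with $\anbr{\alpha,\mu} < 0$. Unraveling definitions, the restriction of $h_\mu$ to the root subgroup $L^+N_\alpha \simeq L^+\GG_a$ is the map $u \mapsto \varpi^{\anbr{\alpha,\mu}}u$ landing in $L\GG_a/L^+\GG_a$, whose image contains the subgroup $\varpi^{-1}\cO_0/\cO_0$. The conductor assumption on $\psi$ then forces $\psi \circ h_\mu|_{L^+N_\alpha}$ to be nontrivial, so $\chi$ is nontrivial.

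Next, I would combine multiplicativity of $\cL_\psi$ (\autoref{cor:lustzig_character_sheaves}(1)) with \autoref{p:finite_action_mv_cycles} to obtain an isomorphism $\act^*(h_\mu^{\lambda,\nu})^*\cL_\psi \simeq \chi \boxtimes (h_\mu^{\lambda,\nu})^*\cL_\psi$ on $L^hN \times \MV_{\lambda,\nu}$. Since the $L^+G$-equivariance of $\cA_\lambda$ factors through $L^hG \supset L^hN$ by \autoref{l:schubert_action_factors}, we also have $\act^*\cA_\lambda \simeq \QQellbar \boxtimes \cA_\lambda$; tensoring gives the twisted equivariance $\act^*\cF \simeq \chi \boxtimes \cF$.

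For the final vanishing, the key trick is to consider the automorphism $m \colon L^hN \times \MV_{\lambda,\nu} \to L^hN \times \MV_{\lambda,\nu}$ sending $(g,x) \mapsto (g, g^{-1}x)$. It satisfies $\act \circ m = \pi_2$, so $m^*(\chi \boxtimes \cF) \simeq m^*\act^*\cF \simeq \pi_2^*\cF \simeq \QQellbar \boxtimes \cF$. Since $m$ is an isomorphism, applying $R\Gamma_c$ to both sides and invoking Künneth yields
\[ R\Gamma_c(L^hN,\chi) \otimes R\Gamma_c(\MV_{\lambda,\nu},\cF) \simeq R\Gamma_c(L^hN,\QQellbar) \otimes R\Gamma_c(\MV_{\lambda,\nu},\cF) \]
in $D^b(\QQellbar)$. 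The left side vanishes because $R\Gamma_c(H,\chi) = 0$ for a nontrivial character sheaf on a connected unipotent pfp perfect group $H$ (reducing via a filtration of $L^hN$ by $\GG_a$-quotients to the classical Artin--Schreier vanishing). Since $R\Gamma_c(L^hN,\QQellbar) \neq 0$, the Künneth formula over the field $\QQellbar$ forces $R\Gamma_c(\MV_{\lambda,\nu},\cF) = 0$, as claimed. The main subtlety is really just verifying that $L^hN$ is connected unipotent as a pfp perfect group so that the concluding vanishing applies; this is immediate because $L^hN$ is the perfection of an iterated extension of copies of $\GG_a$ over $\Spec k$.
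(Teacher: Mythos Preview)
Your proposal is correct and follows essentially the same approach as the paper: establish $(L^hN,\chi)$-equivariance of $\cF$ by combining \autoref{p:finite_action_mv_cycles} with the multiplicativity of $\cL_\psi$ and the $L^+G$-equivariance of $\cA_\lambda$, show $\chi$ is nontrivial via a simple root $\alpha$ with $\anbr{\alpha,\mu}<0$, and deduce vanishing. The only difference is packaging: the paper invokes \autoref{p:vanishing_cohomology_equivariant_sheaf} (citing \cite[Lemma~3.3]{Ngo98}) as a black box, whereas you spell out that lemma's proof via the shearing automorphism $(g,x)\mapsto(g,g^{-1}x)$ and K\"unneth.
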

\begin{proof}
By \autoref{p:finite_action_mv_cycles} and the fact that $\cA_\lambda$ is $L^+G$-equivariant,
\begin{align*}
    \act^*(\cA_\lambda \otimes (h_\mu^{\lambda,\nu})^*\cL_\psi) &= \act^*\cA_\lambda \otimes \act^*(h_\mu^{\lambda,\nu})^*\cL_\psi \\
    &= (\QQellbar \boxtimes \cA_\lambda) \otimes (h_\mu \times h_\mu^{\lambda,\nu})^*(+)^*\cL_\psi \\
    &= (\QQellbar \boxtimes \cA_\lambda) \otimes (h_\mu \times h_\mu^{\lambda,\nu})^*(\cL_\psi \boxtimes \cL_\psi) \\
    &= (\QQellbar \boxtimes \cA_\lambda) \otimes (h_\mu^*\cL_\psi \boxtimes (h_\mu^{\lambda,\nu})^*\cL_\psi) \\
    &= h_\mu^*\cL_\psi \boxtimes (\cA_\lambda \otimes (h_\mu^{\lambda,\nu})^*\cL_\psi),
\end{align*}
so $\cA_\lambda \otimes (h_\mu^{\lambda,\nu})^*\cL_\psi$ is $(L^hN, h_\mu^*\cL_\psi)$-equivariant.

If $\mu$ is not dominant, pick a simple root $\alpha$ such that $\anbr{\alpha,\mu} < 0$. The composition
    \[ L^+\GG_a \hra L\GG_a \xra{Lu_\alpha} LN \xra{\ad\varpi^\mu} LN \ra LN/[LN,LN] \xra{+} L\GG_a \]
is precisely multiplication by $\varpi^{\anbr{\alpha,\mu}}$, so $h_\mu|_{L^+N}$ is non-trivial. This implies that $h_\mu^*\cL_\psi$ is also nontrivial. To see why, note that the local system $h_\mu^*\cL_\psi$ is the character sheaf for the character
    \[ \pi_1^{\et}(L^hN) \to \pi_1^{\et}(L^{\geq-s}\GG_a/L^+\GG_a) \thra \varpi^{-s}\cO/\cO \to \QQellbar^\times \]
and the first map is surjective since the morphism $L^hN \to L^{\geq-s}\GG_a/L^+\GG_a$ has connected geometric fibers, so this character is nontrivial. We conclude by applying \autoref{p:vanishing_cohomology_equivariant_sheaf}.
\end{proof}

\begin{p}\label{p:vanishing_cohomology_equivariant_sheaf}
Suppose $Z$ is a pfp perfect group scheme over $k$ with an action
    \[ \act: G \times Z \ra Z \]
of a pfp perfect group scheme $G$ defined over $k$. If $\cL$ is a non-trivial rank 1 local system on $G$ and $\cF$ is a $(G, \cL)$-equivariant complex of sheaves on $Z$, i.e.
    \[ \act^*\cF \simeq \cL \boxtimes \cF \]
then
    \[ R\Gamma_c(Z, \cF) = 0. \]
\end{p}
\begin{proof}
The proof of \cite[Lemma 3.3]{Ngo98} goes through verbatim, for $G$ a connected commutative algebraic group replacing $\GG_a$, noting that the statement depends only on the \'etale topology, which is insensitive to perfection.
\end{proof}

\newpage

\section{Outline of main argument}
\label{sec:main_argument}

As explained in the introduction, we split cases of $\nu \in \Omega(\lambda)$ according to whether $\nu^+ = \lambda$ or $\nu^+ < \lambda$, where $\nu^+$ is the unique dominant representative of $\nu$ in $W \cdot \nu$. We address the first case in \autoref{dominant_case_equal_cocharacters}, \autoref{dominant_case_weyl_orbit}, and the second case in \autoref{sec:unequal_cocharacters}.

\subsection{Structure of \texorpdfstring{$\MV_{\lambda,w\lambda}$}{MV lambda w lambda}}
\label{structure_of_MV_Lambda_wLambda}
\begin{lemma}
\label{lem:weyl_orbit_intersection}
For all $\lambda \in X_*(T)_+$ and all $w \in W$,
    \[ \MV_{\lambda,w\lambda} = S_{w\lambda} \cap \Gr_\lambda = L^+N\varpi^{w\lambda}L^+G/L^+G = L^+N w \varpi^\lambda L^+G/L^+G. \]
\end{lemma}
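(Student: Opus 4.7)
The approach is to verify the three equalities in sequence.

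For the first equality $\MV_{\lambda,w\lambda} = \Gr_\lambda \cap S_{w\lambda}$, I use the Cartan decomposition $\Gr_{\le\lambda} = \bigsqcup_{\mu \in X_*(T)_+,\, \mu \leq \lambda} \Gr_\mu$ together with the nonemptiness criterion from \autoref{p:MV_nonempty}: an intersection $\Gr_\mu \cap S_{w\lambda}$ is nonempty iff $w\lambda \in \Omega(\mu)$, i.e.\ $w'(w\lambda) \leq \mu$ for all $w' \in W$. Taking $w' = w^{-1}$ gives $\lambda \leq \mu$, which combined with $\mu \leq \lambda$ forces $\mu = \lambda$; only the top stratum contributes. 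The third equality $L^+N \varpi^{w\lambda} L^+G/L^+G = L^+N w\varpi^\lambda L^+G/L^+G$ follows from the identity $\varpi^{w\lambda} = w\varpi^\lambda w^{-1}$ in $T(F)$ by absorbing $w^{-1} \in G(\cO) \subset L^+G$.

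The bulk of the work is the middle equality $\Gr_\lambda \cap S_{w\lambda} = L^+N \varpi^{w\lambda} L^+G/L^+G$. The containment $\supseteq$ is immediate: $L^+N \subset L^+G$ together with $\varpi^{w\lambda} \in \Gr_\lambda$ places the right-hand side inside $\Gr_\lambda$, and $L^+N \subset LN$ places it inside $S_{w\lambda}$. For the reverse, the plan is to interpret both sides as the Bia\l{}ynicki--Birula (BB) attracting cell of the $T$-fixed point $\varpi^{w\lambda}$ on $\Gr_\lambda$, under the action of a regular dominant cocharacter $\eta: \GG_m \to T$ by left multiplication. A direct root-subgroup calculation using $\eta(t) n_\alpha(u) \eta(t)^{-1} = n_\alpha(t^{\anbr{\alpha,\eta}} u)$ shows that as $t \to 0$ the positive loop group $L^+N$ is contracted to the identity, so the attracting set of $\varpi^\nu$ in $\Gr_G$ is exactly the semi-infinite orbit $S_\nu$. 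Restricting to the smooth $L^+G$-orbit $\Gr_\lambda$ (and using the compactification $\Gr_\lambda \subset \Gr_{\leq\lambda}$ to guarantee limits exist at smooth fixed points) yields the BB decomposition $\Gr_\lambda = \bigsqcup_{w' \in W/W_\lambda} (\Gr_\lambda \cap S_{w'\lambda})$, with each piece an affine cell. An orbit-stabilizer computation in $L^+N$, using that the stabilizer of $\varpi^{w\lambda}$ in the root subgroup $L^+N_\alpha$ equals $L^{\geq \max(0,\, \anbr{\alpha, w\lambda})} N_\alpha$, gives $\dim L^+N \cdot \varpi^{w\lambda} = \sum_{\alpha \in \Phi_+}\max(0, \anbr{\alpha, w\lambda}) = \anbr{\rho, \lambda + w\lambda}$, matching both the BB cell dimension and \autoref{p:MV_nonempty}. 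Since the $L^+N$-orbit is irreducible of the correct dimension and sits inside the irreducible BB affine cell, they coincide.

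The main concern will be justifying BB theory in the perfect-scheme setting, but this is not serious: the BB stratification and its dimensions are determined by the underlying topology and by tangent-space weights at each $T$-fixed point, both insensitive to perfection via the reduced finite-type models from \cite[Prop.~A.15, A.17]{Zhu14}. If a fully hands-on alternative is preferred, one can work directly on $k$-points (reducing via \autoref{lem:morphism_k_points}): decompose any $n \in N(F)$ as $n_+ n_-$ with $n_+ \in N(\cO)$ and $n_-$ a strictly-negative-Laurent-tail factor, and verify using the explicit description of Cartan types via valuations of entries that $n \varpi^{w\lambda} G(\cO) \in \Gr_\lambda$ forces each root coordinate of $n_-$ to have valuation $\geq \anbr{\alpha, w\lambda}$; this is exactly the condition $\varpi^{-w\lambda} n_- \varpi^{w\lambda} \in L^+G$, so $n_- \varpi^{w\lambda} L^+G = \varpi^{w\lambda} L^+G$ and $x = n_+ \varpi^{w\lambda} L^+G \in L^+N \cdot \varpi^{w\lambda} L^+G / L^+G$.
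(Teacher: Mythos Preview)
Your treatment of the first and third equalities matches the paper's. For the middle equality, the paper takes a shorter route: it reduces to $k$-points via \autoref{lem:morphism_k_points} and then directly invokes a classical computation of Satake, namely
\[
    N(F)\varpi^{\lambda}G(\cO) \cap G(\cO)\varpi^{\lambda}G(\cO) = N(\cO)\varpi^{\lambda}G(\cO),
\]
and for general $w$ rewrites $N(F)\varpi^{w\lambda}G(\cO) = w\,N_w(F)\,\varpi^{\lambda}G(\cO)$ with $N_w = wNw^{-1}$ and applies the same identity to the Borel $wBw^{-1}$. Your Bia\l{}ynicki--Birula approach is a genuinely different and more geometric route, and the identification of $S_\nu$ with the $\GG_m$-attractor and the dimension count are both correct.

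There is, however, a gap in your last step. From ``the $L^+N$-orbit is irreducible of the correct dimension and sits inside the irreducible BB affine cell'' you only get that the orbit is \emph{dense}, not that it equals the cell. One clean fix: since the orbit is locally closed of full dimension it is open in the cell; its complement is then closed and $\GG_m$-stable, and in a BB cell every nonempty $\GG_m$-stable closed subset contains the unique fixed point $\varpi^{w\lambda}$ (as every point flows to it), which already lies in the orbit---contradiction. Your alternative ``hands-on'' sketch is essentially an attempt to reprove Satake's identity, but the decomposition $n = n_+ n_-$ with a ``strictly-negative-Laurent-tail factor'' is not well-defined for $N(F)$ in mixed characteristic (there is no canonical negative loop group, as the paper notes elsewhere), so that paragraph would need substantial reworking; the paper avoids this entirely by citing Satake.
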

\begin{proof}
By \autoref{p:MV_nonempty},
    \[ S_{w\lambda} \cap \Gr_{\le \lambda'} \not = \varnothing \text{ if and only if } w\lambda \in \Omega(\lambda'). \]
If $\lambda' < \lambda$ we cannot have $w\lambda \in \Omega(\lambda')$. So since
\[ \MV_{\lambda,w\lambda} = \bigcup_{\lambda' \leq \lambda} S_{w\lambda} \cap \Gr_{\lambda'} \]  we see that $\MV_{\lambda,w\lambda} = S_{w\lambda} \cap \Gr_\lambda$.

For the second equality note that all of the schemes in question are pfp perfect schemes, so by \autoref{lem:morphism_k_points} it suffices to check the equality on $k$-points. This boils down to an old result of Satake: by the Remark before Section 8.3 in \cite{Satake}\footnote{Satake's paper assumes $F$ is a finite extension of $\QQ_p$, but the same proof works when $F$ is finite and totally ramified over $F_0 = W(k)$, where $k$ is an algebraically closed field of characteristic $p$.},
\begin{equation}
\label{eq:satake_computation}
    N(F)\varpi^{\lambda}G(\cO) \cap G(\cO)\varpi^{\lambda}G(\cO) = N(\cO)\varpi^{\lambda}G(\cO).
\end{equation}
For nontrivial $w \in W$, note that
    \[ N(F) \varpi^{w\lambda} G(\cO) = w N_w(F) \varpi^{\lambda} G(\cO), \]
where $N_w = wNw^{-1}$, which is the unipotent radical of the Borel subgroup $wBw^{-1}$. We thus apply \autoref{eq:satake_computation} with $N_w$ replacing $N$ to obtain
\begin{align*}
    N(F) \varpi^{w\lambda} G(\cO) \cap G(\cO) \varpi^\lambda G(\cO) &= w(N_w \varpi^{\lambda} G(\cO) \cap G(\cO) \varpi^\lambda G(\cO)) \\
    &= w(N_w(\cO) \varpi^\lambda G(\cO)) \\
    &= N(\cO) \varpi^{w\lambda} G(\cO). \qedhere
\end{align*}
\end{proof}

Recall from \cite{Zhu14} that there is an isomorphism
\[
    \Gr_\lambda \xra\sim L^+G/(L^+G \cap \ad(\varpi^\lambda)L^+G) \quad\quad g\varpi^{\lambda} \mapsto g
\]
and a reduction map
    \[ \mathring\phi: \Gr_\lambda \xra\sim L^+G/(L^+G \cap \ad(\varpi^\lambda)L^+G) \to (\bar G/ \bar P_\lambda)^{\pf}. \]
Since by \autoref{lem:weyl_orbit_intersection} we know that $\MV_{\lambda,w\lambda} = L^+N w \varpi^\lambda L^+G/L^+G$, it follows that the reduction map factors as
\[
\begin{tikzcd}
    \MV_{\lambda,w\lambda} \rar[hook]  \ar[dr, phantom, "\lrcorner" ] \dar[swap]{\mathring\phi|_{\MV_{\lambda,w\lambda}}}  & \Gr_\lambda \dar{\mathring\phi} \\
    (\bar N w \bar P_\lambda/\bar P_\lambda)^{\pf} \rar[hook] & (\bar G / \bar P_\lambda)^{\pf}
\end{tikzcd}
\]

Moreover, it is clear from the definition that $\MV_{\lambda,w\lambda} \to (\bar N w \bar P_\lambda/\bar P_\lambda)^{\pf}$ is surjective.

The orbit-stabilizer theorem gives us that
    \[ L^+N/(L^+N \cap \ad(\varpi^{w\lambda})L^+N) \xra\sim \MV_{\lambda,w\lambda} \]
so by factoring into positive root subgroups we find that (see \cite[Lem. 7.4]{Ngo98} for an analogous statement in equal characteristic).
\begin{align*}
    L^+N/(L^+N \cap \ad(\varpi^{w\lambda})L^+N) &\simeq \prod_{\alpha \in \Phi^+} L^+N_\alpha/\varpi^{\max(\anbr{w^{-1}\alpha,\lambda},0)}L^+N_\alpha \\
    & = \prod_{\substack{\alpha \in \Phi^+, \\ \anbr{w^{-1}\alpha,\lambda} > 0}} L^+N_\alpha/\varpi^{\anbr{w^{-1}\alpha,\lambda}} L^+N_\alpha
\end{align*}
Similarly,
    \[ \bar N/(\bar N \cap \ad(w)\bar P_\lambda) \xra\sim \bar N w \bar P_\lambda / \bar P_\lambda\]
and again factoring into positive root subgroups we find that
\[
    \bar N / (\bar N \cap \ad(w) \bar P_\lambda) \simeq \prod_{\substack{\alpha \in \Phi^+, \\ \anbr{w^{-1}\alpha,\lambda} > 0}} \bar N_\alpha
\]

\begin{lemma}
\label{lem:mv_weyl_reduction_commutes}
The following diagram commutes:
\[
\begin{tikzcd}
    \prod_{\substack{\alpha \in \Phi^+, \\ \anbr{w^{-1}\alpha,\lambda} > 0}} L^+N_\alpha/\varpi^{\anbr{w^{-1}\alpha,\lambda}} L^+N_\alpha \rar{\sim} \dar & \MV_{\lambda,w\lambda} \dar{\mathring\phi} \dar \\
    \prod_{\substack{\alpha \in \Phi^+, \\ \anbr{w^{-1}\alpha,\lambda} > 0}} \bar N_\alpha^{\pf} \rar{\sim} &  (\bar N w \bar P_\lambda/\bar P_\lambda)^{\pf}
\end{tikzcd}
\]
where the left vertical arrow is reduction mod $\varpi$ in each factor.
\end{lemma}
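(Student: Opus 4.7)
The plan is to reduce to checking commutativity on $k$-points via \autoref{lem:morphism_k_points}, and then to chase the definitions. All four corners of the diagram are pfp perfect $k$-schemes: the top-right is pfp by \autoref{lem:MV_is_pfp}, the top-left is a finite product of truncated loop groups $L^{\anbr{w^{-1}\alpha,\lambda}}N_\alpha$ (each the perfection of an affine space of finite type), and the bottom row consists of perfections of finite-type $k$-varieties. So it suffices to verify commutativity on $k$-points.

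For the $k$-point check, I would chase a tuple $(x_\alpha)_\alpha$ with $x_\alpha \in \cO/\varpi^{\anbr{w^{-1}\alpha,\lambda}}\cO$ through both routes. Fix lifts $\tilde x_\alpha \in \cO$ and an ordering of the relevant positive roots, and set $n := \prod_\alpha n_\alpha(\tilde x_\alpha) \in L^+N(k)$. Under the top horizontal isomorphism from \autoref{lem:weyl_orbit_intersection}, this tuple maps to the coset $n w \varpi^\lambda L^+G \in \MV_{\lambda,w\lambda}(k)$; here I am using $\varpi^{w\lambda} = w\varpi^\lambda w^{-1}$ together with $w^{-1} \in G(\cO)$ to pass between the two presentations $L^+N\varpi^{w\lambda}L^+G$ and $L^+N w \varpi^\lambda L^+G$ of $\MV_{\lambda,w\lambda}$ used in \autoref{lem:weyl_orbit_intersection}. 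Applying $\mathring\phi$, which is induced by $g\varpi^\lambda L^+G \mapsto \bar g \,\bar P_\lambda$ and is well-defined because $nw \in L^+G(k)$, then sends this coset to $\overline{nw}\,\bar P_\lambda = \bar n\, w\, \bar P_\lambda$, where $\bar n = \prod_\alpha n_\alpha(\bar{\tilde x}_\alpha) \in \bar N(k)$ by multiplicativity of reduction.

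Going around the other way, the left vertical arrow reduces each factor $\tilde x_\alpha$ modulo $\varpi$, and the bottom horizontal isomorphism sends the resulting tuple to $\prod_\alpha n_\alpha(\bar{\tilde x}_\alpha) \cdot w \, \bar P_\lambda = \bar n \, w\, \bar P_\lambda$, matching the first path. The main (mild) obstacle is setting up the identifications consistently, in particular ensuring that the product decompositions on the top and bottom rows use the same ordering of positive roots and the same Weyl lift $w \in G(\cO)$ fixed in \autoref{sec:Notation}, and carefully tracking the switch between $\varpi^{w\lambda}$ and $w\varpi^\lambda w^{-1}$. Once these conventions are aligned, the commutativity amounts to the statement that reduction modulo $\varpi$ is a group homomorphism compatible with the factorization into root subgroups.
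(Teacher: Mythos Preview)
The paper's own proof is simply ``Omitted.'' Your approach via \autoref{lem:morphism_k_points} and a $k$-point chase is correct and is exactly the sort of routine verification the authors presumably had in mind; there is nothing further to compare against.
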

\begin{proof}
Omitted.
\end{proof}

From this, we can draw various conclusions about the behavior of the reduction map.

\begin{lemma}
If $\lambda$ is minuscule, then the reduction map
    \[ \MV_{\lambda,w\lambda} \to (\bar N w \bar P_\lambda/\bar P_\lambda)^{\pf} \]
is an isomorphism.
\end{lemma}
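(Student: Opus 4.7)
The plan is to reduce the claim to the commutative diagram of \autoref{lem:mv_weyl_reduction_commutes} and then exploit the minuscule hypothesis to show that each factor in the product decomposition is unaffected by the reduction mod $\varpi$.

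First I would invoke \autoref{lem:mv_weyl_reduction_commutes}, which identifies the reduction map with the vertical arrow
\[
    \prod_{\substack{\alpha \in \Phi^+ \\ \anbr{w^{-1}\alpha,\lambda} > 0}} L^+N_\alpha/\varpi^{\anbr{w^{-1}\alpha,\lambda}} L^+N_\alpha \longrightarrow \prod_{\substack{\alpha \in \Phi^+ \\ \anbr{w^{-1}\alpha,\lambda} > 0}} \bar N_\alpha^{\pf},
\]
which is reduction mod $\varpi$ in each factor. It therefore suffices to show each factor map is an isomorphism.

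Next I would apply the minuscule hypothesis via \autoref{alternative_characterization_quasi_minuscule}, which forces $\anbr{\alpha,\lambda} \in \{0,\pm 1\}$ for all roots $\alpha$. Hence for the roots $\alpha$ appearing in the product, the exponent $\anbr{w^{-1}\alpha,\lambda}$ is exactly $1$, so each factor on the left becomes $L^+N_\alpha/\varpi L^+N_\alpha$.

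Finally, fixing an isomorphism $N_\alpha \simeq \GG_a$, I need to identify $L^+\GG_a/\varpi L^+\GG_a$ with $\GG_a^{\pf}$. This follows from the definitions: on a perfect $k$-algebra $R$, the quotient $W_\cO(R)/\varpi W_\cO(R)$ agrees with the truncated Witt vectors $W_{\cO,1}(R) = L^1\GG_a(R) = R$, identifying the quotient with the perfection of $\GG_a$, and the reduction map with the canonical projection. I expect no real obstacle here; the main thing to be slightly careful about is that \'etale sheafification is compatible with the presheaf-level identification $W_\cO(R)/\varpi W_\cO(R) \simeq R$, which is automatic because the quotient is already representable. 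Assembling the product of these isomorphisms then yields the claim.
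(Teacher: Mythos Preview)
Your proof is correct and follows exactly the same route as the paper: invoke \autoref{lem:mv_weyl_reduction_commutes} and use the minuscule condition to force every exponent $\anbr{w^{-1}\alpha,\lambda}$ in the product to equal $1$, so that each factor $L^+N_\alpha/\varpi L^+N_\alpha \to \bar N_\alpha^{\pf}$ is an isomorphism. The paper's own proof is a single sentence to this effect; your version is just more explicit about the final identification $L^+\GG_a/\varpi L^+\GG_a \simeq \GG_a^{\pf}$.
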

\begin{proof}
Since $\lambda$ is minuscule, it pairs to $1$ with any positive root, so apply \autoref{lem:mv_weyl_reduction_commutes}.
\end{proof}

\begin{lemma}
\label{lem:quasi_minuscule_semiinfinite_orbit_bundle}
If $\lambda$ is quasi-minuscule, then the reduction map
    \[ \MV_{\lambda,w\lambda} \to (\bar N w \bar P_\lambda/P_\lambda)^{\pf} \]
is:
\begin{enumerate}
    \item an isomorphism if $w\lambda^\vee \in \Phi_-$, and
    \item isomorphic to the projection
    \[
        (\bar N w \bar P_\lambda / \bar P_\lambda)^{\pf} \times \GG_a^{\pf} \to (\bar N w \bar P_\lambda / \bar P_\lambda)^{\pf}
    \]
    if $w\lambda^\vee \in \Phi_+$.
\end{enumerate}
\end{lemma}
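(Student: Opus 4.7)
The plan is to use the product decomposition from \autoref{lem:mv_weyl_reduction_commutes} and then identify each factor explicitly using the quasi-minuscule structure of $\lambda$. By \autoref{alternative_characterization_quasi_minuscule}(2), the unique coroot $\lambda^\vee$ satisfying $\anbr{\lambda^\vee,\lambda} \geq 2$ in fact pairs with $\lambda$ to give exactly $2$ (since $\lambda$ is the coroot of the root $\lambda^\vee$, by the standard structure theory of quasi-minuscule cocharacters), and every other coroot pairs with $\lambda$ to give an element of $\set{-1,0,1}$. Hence, for each positive root $\alpha$ indexing the product, the exponent $\anbr{w^{-1}\alpha,\lambda}$ is either $1$ or $2$, with $2$ attained precisely when $w^{-1}\alpha = \lambda^\vee$, i.e.\ $\alpha = w\lambda^\vee$. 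Such an exceptional factor appears if and only if $w\lambda^\vee \in \Phi_+$, so the dichotomy in the lemma statement corresponds exactly to the absence or presence of an exceptional factor.

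In case (1), $w\lambda^\vee \in \Phi_-$ so no exceptional factor occurs: every factor is $L^+N_\alpha/\varpi L^+N_\alpha \cong \bar N_\alpha^{\pf}$ and reduction mod $\varpi$ is the identity on each component, giving the desired isomorphism. In case (2), the exceptional factor at $\alpha_0 := w\lambda^\vee$ is $L^+N_{\alpha_0}/\varpi^2 L^+N_{\alpha_0}$, and the reduction onto $\bar N_{\alpha_0}^{\pf} \cong \GG_a^{\pf}$ has kernel $\varpi L^+N_{\alpha_0}/\varpi^2 L^+N_{\alpha_0} \cong \GG_a^{\pf}$. So this reduction is a $\GG_a^{\pf}$-torsor over $\GG_a^{\pf}$; since $H^1_{\et}(\Spec R,\GG_a) = 0$ for any ring $R$ and the \'etale site is insensitive to perfection, such a torsor is trivial as a scheme. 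Picking a section identifies it with the projection $\GG_a^{\pf} \times \GG_a^{\pf} \to \GG_a^{\pf}$, and combined with the identities on the remaining factors this exhibits the whole reduction as isomorphic to the projection $(\bar N w \bar P_\lambda/\bar P_\lambda)^{\pf} \times \GG_a^{\pf} \to (\bar N w \bar P_\lambda/\bar P_\lambda)^{\pf}$.

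The main obstacle is the splitting step in case (2): $L^2\GG_a$ is not isomorphic to $\GG_a \times \GG_a$ \emph{as a group} (the Witt-vector addition is nontrivially twisted), so one must assert only a scheme-theoretic splitting of the reduction, which is however all that the statement requires. Apart from this subtlety, the proof is a direct combinatorial analysis of the factors appearing in \autoref{lem:mv_weyl_reduction_commutes} under the quasi-minuscule hypothesis.
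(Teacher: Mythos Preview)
Your argument is correct and follows the same route as the paper: both invoke \autoref{lem:mv_weyl_reduction_commutes} together with the quasi-minuscule pairing data to isolate the single exceptional factor $L^+N_{w\lambda^\vee}/\varpi^2 L^+N_{w\lambda^\vee}$ when $w\lambda^\vee\in\Phi_+$. The only difference is that where you split this factor via the vanishing of $H^1_{\et}(-,\GG_a)$, the paper tacitly uses the more direct observation that $L^2\GG_a$ is already $(\AA^2)^{\pf}$ as a scheme with the reduction map being projection onto the first Witt component.
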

\begin{proof}
Since $\lambda$ is quasi-minuscule, we know that $\anbr{\alpha,\lambda} \in \set{0,1}$ for all $\alpha \in \Phi^+ \setminus \set{\lambda^\vee}$ and $\anbr{\lambda^\vee,\lambda} = 2$. So we conclude by noting that $N_\alpha \simeq \GG_a$ and applying \autoref{lem:mv_weyl_reduction_commutes}.
\end{proof}

Note that if $\lambda$ is quasi-minuscule
\[
    \mathring\phi: \Gr_\lambda \to (\bar G / \bar P_\lambda)^{\pf}
\]
is an $(\AA^1)^{\pf}$-bundle. Even though $\MV_{\lambda,w\lambda}$ is only a section of this bundle restricted to $(\bar N w \bar P_\lambda/\bar P_\lambda)^{\pf}$ when $w\lambda^\vee \in \Phi_-$, we show that the whole restricted bundle is trivial. This will be used later on in \autoref{sec:zero_orbit} when we analyze $\MV_{\lambda,0}$ for $\lambda$ quasi-minuscule.

Before showing this, we construct an appropriate parahoric subgroup. If $\lambda \in X_*(T)_+$, recall that we had a parabolic subgroup $P_\lambda$ generated by $T$ and the root subgroups $N_\alpha$ for $\alpha$ satisfying $\anbr{\alpha,\lambda} \leq 0$.

\begin{definition}
\label{def:p_lambda}
Define the parahoric subgroup
\[
    \cP_\lambda(\cO) = \left\langle T(\cO), N_\alpha(\varpi^a \cO) \text{ where } a = \begin{cases} 0 & \anbr{\alpha,\lambda} \leq 0 \\ 1 & \anbr{\alpha,\lambda} > 0 \end{cases}\right\rangle \subset G(\cO).
\]
This uniquely determines a parahoric $\cO$-group scheme $\cP_\lambda$.
\end{definition}

Reduction mod $\varpi$ gives a surjective map $\cP_\lambda(\cO) \to P_\lambda(k)$\footnote{In fact, $\cP_\lambda(\cO)$ is equal to the preimage of $P_\lambda(k)$ under the reduction $G(\cO) \to G(k)$.}.

Let $\cL_w := \mathring\phi^{-1}((\bar N w \bar P_\lambda/\bar P_\lambda)^{\pf})$.

\begin{lemma}
\label{identification_of_Lw}
\begin{enumerate}
    \item If $w\lambda^\vee \in \Phi_-$, there is a commutative diagram
    \[
    \begin{tikzcd}
        \varpi L^+N_{w\lambda^\vee}/\varpi^2 L^+N_{w\lambda^\vee} \times \prod_{ \substack{\alpha \in \Phi^+ \\
        \anbr{w^{-1}\alpha, \lambda}>0}} L^+N_\alpha/\varpi L^+N_\alpha  \dar["p_2"]  \rar{\sim} & \cL_w  \dar{\mathring\phi} \dar  \\
        \prod_{\substack{\alpha \in \Phi^+, \\ \anbr{w^{-1}\alpha,\lambda} > 0}} \bar N_\alpha^{\pf} \rar{\sim} &  (\bar N w \bar P_\lambda/\bar P_\lambda)^{\pf}
    \end{tikzcd}
    \]
    where $p_2$ is the projection map to the second factor, making $\cL_w \to (\bar N w \bar P_\lambda/\bar P_\lambda)^{\pf}$ an $(\AA^1)^{\pf}$-bundle.
    \item If $w \lambda^\vee \in \Phi_+$, then $\cL_w = \MV_{\lambda,w\lambda}$.
\end{enumerate}
\end{lemma}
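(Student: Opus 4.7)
The plan is to extend the orbit-stabilizer analysis of Lemma \ref{lem:mv_weyl_reduction_commutes} from $\MV_{\lambda,w\lambda}$ to the full $(\AA^1)^{\pf}$-bundle $\cL_w$, with the extra dimension accounted for by the fiber direction $N_{\lambda^\vee}(\varpi\cO)/N_{\lambda^\vee}(\varpi^2\cO)$ of $\mathring\phi$ (the geometric manifestation of the quasi-minuscule structure). First I would identify $\cL_w$ as the preimage of $(\bar N w \bar P_\lambda/\bar P_\lambda)^{\pf}$ under the reduction $L^+G/K \to L^+G/\cP_\lambda(\cO)$, where $K := L^+G \cap \ad(\varpi^\lambda)L^+G$, yielding $\cL_w = L^+N w \cP_\lambda(\cO) \varpi^\lambda L^+G/L^+G$. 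A direct comparison of the root subgroups of $\cP_\lambda(\cO)$ and $K$ shows they differ only in the $\lambda^\vee$ direction in the quasi-minuscule case, giving the factorization $\cP_\lambda(\cO) = N_{\lambda^\vee}(\varpi\cO) \cdot K$. Since $K\varpi^\lambda L^+G = \varpi^\lambda L^+G$, conjugating the $N_{\lambda^\vee}$-factor across $w$ rewrites this as
\[
\cL_w = L^+N \cdot N_{w\lambda^\vee}(\varpi\cO) \cdot w\varpi^\lambda L^+G/L^+G.
\]

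For part (2), when $w\lambda^\vee \in \Phi_+$, the subgroup $N_{w\lambda^\vee}(\varpi\cO)$ is already contained in $L^+N$, so the above collapses to $L^+N w\varpi^\lambda L^+G/L^+G$, which equals $\MV_{\lambda,w\lambda}$ by Lemma \ref{lem:weyl_orbit_intersection}. This is the easy case.

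For part (1), when $w\lambda^\vee \in \Phi_-$, the root subgroup $N_{w\lambda^\vee}$ is disjoint from $N$, so multiplication gives an isomorphism $L^+N \times N_{w\lambda^\vee}(\varpi\cO) \xra\sim L^+N \cdot N_{w\lambda^\vee}(\varpi\cO)$ onto a subscheme of the big cell of $L^+G$. Orbit-stabilizer then presents $\cL_w$ as $(L^+N \cdot N_{w\lambda^\vee}(\varpi\cO))/\Stab_{w\varpi^\lambda}$. Using the conjugation formula $\ad(w\varpi^\lambda)(n_{w^{-1}\delta}(x)) = n_\delta(\varpi^{\anbr{w^{-1}\delta,\lambda}} x)$, the intersection $\ad(w\varpi^\lambda)L^+G \cap L^+G$ decomposes coordinate-wise into root subgroups as $\prod_\delta N_\delta(\varpi^{\max(0,\anbr{w^{-1}\delta,\lambda})}\cO)$; intersecting with $L^+N \cdot N_{w\lambda^\vee}(\varpi\cO)$ gives
\[
\Stab \simeq \prod_{\alpha \in \Phi^+} \varpi^{\max(0,\anbr{w^{-1}\alpha,\lambda})} L^+N_\alpha \;\times\; \varpi^2 L^+N_{w\lambda^\vee}
\]
(the last factor uses $\anbr{\lambda^\vee,\lambda} = 2$). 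Quotienting and specializing to the quasi-minuscule case with $w\lambda^\vee \in \Phi_-$---where $\anbr{w^{-1}\alpha,\lambda} \in \{0,1\}$ for all $\alpha \in \Phi_+$, since the value $2$ would force $\alpha = w\lambda^\vee \in \Phi_-$---gives the claimed product decomposition. Commutativity of the diagram is then automatic: reduction mod $\varpi$ on each root-subgroup factor kills the $\varpi L^+N_{w\lambda^\vee}/\varpi^2 L^+N_{w\lambda^\vee}$ factor and projects onto $\prod \bar N_\alpha^{\pf} \simeq (\bar N w \bar P_\lambda/\bar P_\lambda)^{\pf}$ by Lemma \ref{lem:mv_weyl_reduction_commutes}.

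The hard part will be establishing the product decomposition of $\Stab$ in part (1): checking that it factors compatibly with the root parametrization of $L^+N \cdot N_{w\lambda^\vee}(\varpi\cO)$ rests on the unique factorization in the big cell $L^+N^- \times L^+T \times L^+N \hookrightarrow L^+G$ and on the fact that conjugation by $w\varpi^\lambda$ respects the root-space filtration.
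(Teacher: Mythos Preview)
Your proposal is correct and follows essentially the same approach as the paper. Both identify $\cL_w$ as $L^+N w \cP_\lambda / \cH_\lambda$ (with $\cH_\lambda = L^+G \cap \ad(\varpi^\lambda)L^+G$), use the factorization $\cP_\lambda = (\varpi L^+N_{\lambda^\vee}) \cdot \cH_\lambda$ to rewrite this as $L^+N \cdot (\varpi L^+N_{w\lambda^\vee}) \cdot w\cH_\lambda/\cH_\lambda$, and then compute via orbit--stabilizer, decomposing the stabilizer root-by-root. Your treatment of part~(2) is slightly more direct than the paper's (which defers to \autoref{lem:quasi_minuscule_semiinfinite_orbit_bundle}): observing that $N_{w\lambda^\vee}(\varpi\cO) \subset L^+N$ when $w\lambda^\vee \in \Phi_+$ immediately collapses $\cL_w$ to $L^+N w\varpi^\lambda L^+G/L^+G = \MV_{\lambda,w\lambda}$.
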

\begin{proof}
Part (2) is proven in \autoref{lem:quasi_minuscule_semiinfinite_orbit_bundle}.
For (1), we have the following pullback diagram
\[
\begin{tikzcd}
    L^+Nw \cP_\lambda/(L^+G \cap \ad(\varpi^\lambda)L^+G) \rar \ar[d] & L^+G/(L^+G \cap \ad(\varpi^\lambda) L^+G) \simeq \Gr_\lambda \ar[d] \\
    (\bar N w \bar P_\lambda/\bar P_\lambda)^{\pf} \rar[hook]  & (\bar G/\bar P_\lambda)^{\pf}
\end{tikzcd}
\]
where $\cP_\lambda$ is the parahoric defined in \autoref{def:p_lambda}. Let $\cH_\lambda := L^+G \cap \ad(\varpi^\lambda)L^+G$. By the Iwahori decomposition for $\cP_\lambda$ and $\cH_\lambda$ (spelled out in more detail below in \autoref{cohomology_on_quasi_minuscule_weyl_orbit_intersection}),
\[
    (\varpi L^+N_{\lambda^\vee}) \cdot \cH_\lambda = \cP_\lambda,
\]
so we can rewrite the top left quotient as
\[
    L^+N w (\varpi L^+N_{\lambda^\vee})\cH_\lambda/ \cH_\lambda = L^+N(\varpi L^+N_{w\lambda^\vee})w\cH_\lambda/ \cH_\lambda
\]
We can define an action of $L^+N \times (\varpi L^+N_{w\lambda^\vee})$ on $L^+G/\cH_\lambda$ using the formula
    \[ (m,n) \cdot g\cH_\lambda = mng\cH_\lambda. \]
The stabilizer of $w\cH_\lambda$ is given by pairs $(m,n)$ such that $mn \in \ad(w)\cH_\lambda$, which translates to
\[
    \ad(\varpi^{-w\lambda})(mn) \in L^+G.
\]
So by decomposing into root spaces, we find that
\begin{align*}
    \cL_w &\simeq \smbr{\prod_{\alpha \in \Phi^+} L^+N_\alpha/\varpi^{\max \smbr{ \anbr{w^{-1}\alpha, \lambda},0}} L^+N_\alpha} \times \varpi L^+N_{w\lambda^\vee}/\varpi^{\max \smbr{ 1, \anbr{w \lambda , w\lambda^\vee}}} L^+N_{w\lambda^\vee} \\
    &= \smbr{\prod_{ \substack{\alpha \in \Phi^+ \\
    \anbr{w^{-1}\alpha, \lambda}>0}} L^+N_\alpha/\varpi L^+N_\alpha} \times \varpi L^+N_{w\lambda^\vee}/\varpi^2 L^+N_{w\lambda^\vee}
\end{align*}
The desired diagram follows.
\end{proof}

\subsection{The case of \texorpdfstring{$\nu=\lambda$}{nu is lambda}}
\label{dominant_case_equal_cocharacters}
Suppose $\mu \in X_*(T)_+$ is dominant. In this section we treat the case where $\nu = \lambda$. Since the highest weight representation $V^{\lambda+\mu}$ appears with multiplicity one in $V^\lambda \otimes V^\mu$, \autoref{conj:full_CS} translates to:

\begin{proposition}
\label{p:dominant_equal_case_main_theorem}
If $\lambda,\mu \in X_*(T)_+$, then
\[
    R\Gamma_c(\MV_{\lambda,\lambda}, \cA_\lambda \otimes (h_\mu^{\lambda,\lambda})^*(\cL_\psi)) =  R\Gamma_c(\MV_{\lambda,\lambda}, \cA_\lambda) = \QQellbar[-2\anbr{\rho,\lambda}]\smbr{-\anbr{\rho,\lambda}}
\]
\end{proposition}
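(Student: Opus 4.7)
The plan is to exploit the fact that $\MV_{\lambda,\lambda}$ lies inside the open Schubert cell $\Gr_\lambda$, reducing both the IC sheaf and the pulled-back character sheaf to something explicit on an affine space.

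First I would invoke \autoref{lem:weyl_orbit_intersection} with $w = 1$ to get
\[
    \MV_{\lambda,\lambda} = S_\lambda \cap \Gr_\lambda = L^+N \varpi^\lambda L^+G/L^+G,
\]
so $\MV_{\lambda,\lambda}$ is contained in the open Schubert cell $\Gr_\lambda$. Since $\cA_\lambda$ is the intermediate extension from $\Gr_\lambda$, its restriction to $\MV_{\lambda,\lambda}$ is simply the shifted constant sheaf $\QQellbar[\anbr{2\rho,\lambda}](\anbr{\rho,\lambda})$.

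Next I would show that the pullback of the character sheaf is trivial. By \autoref{lemma:definition_of_h_map}, on $k$-points $h_\mu^{\lambda,\lambda}$ sends $n\varpi^\lambda$ to $h(\ad(\varpi^\mu)(n))$ where $n \in N(\cO)$. Since $\mu$ is dominant, $\ad(\varpi^\mu)$ preserves $L^+N$, so $h(\ad(\varpi^\mu)(n)) \in L^+\GG_a$ and hence maps to $0$ in $L\GG_a/L^+\GG_a$. By \autoref{lem:morphism_k_points} applied to the map $h_\mu^{\lambda,\lambda}\colon \MV_{\lambda,\lambda}\to L^{\ge-s}\GG_a/L^+\GG_a$ and the constant map at $0$ (both of which target a pfp perfect scheme by \autoref{lem:GrGa-Shift}), these morphisms agree, so $(h_\mu^{\lambda,\lambda})^*\cL_\psi$ is the constant sheaf $\QQellbar$. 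This establishes the first equality.

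For the second equality, I would identify $\MV_{\lambda,\lambda}$ with an affine space via the orbit-stabilizer isomorphism discussed in \autoref{structure_of_MV_Lambda_wLambda} (with $w=1$):
\[
    \MV_{\lambda,\lambda} \simeq \prod_{\substack{\alpha \in \Phi^+ \\ \anbr{\alpha,\lambda}>0}} L^+N_\alpha/\varpi^{\anbr{\alpha,\lambda}} L^+N_\alpha,
\]
which is the perfection of an affine space of dimension $\sum_{\alpha \in \Phi^+}\anbr{\alpha,\lambda} = \anbr{2\rho,\lambda}$ (using dominance of $\lambda$ to drop the positive-part condition harmlessly). Then a standard cohomology computation gives
\[
    R\Gamma_c(\AA^{\anbr{2\rho,\lambda},\pf}_{\ol k},\QQellbar) = \QQellbar[-2\anbr{2\rho,\lambda}](-\anbr{2\rho,\lambda}),
\]
and twisting/shifting by the constant sheaf $\QQellbar[\anbr{2\rho,\lambda}](\anbr{\rho,\lambda})$ yields $\QQellbar[-2\anbr{\rho,\lambda}](-\anbr{\rho,\lambda})$, using $\anbr{2\rho,\lambda} = 2\anbr{\rho,\lambda}$.

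There is no serious obstacle here: the whole argument is a clean special case where $\MV_{\lambda,\lambda}$ avoids every lower stratum (so the IC sheaf trivializes) and where dominance of $\mu$ forces the Whittaker character to vanish (so the twist disappears). The only thing to be careful about is checking that $h_\mu^{\lambda,\lambda}$ really is the zero morphism of schemes, not just on $k$-points --- this is where \autoref{lem:morphism_k_points} is essential, and the quasi-compactness needed for the target to lie in a finite-dimensional $L^{\ge -s}\GG_a/L^+\GG_a$ is provided by \autoref{lem:action_factoring_through_finite_part}.
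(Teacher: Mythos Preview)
Your proof is correct. The argument for the first equality --- checking on $k$-points that $h_\mu^{\lambda,\lambda}$ is trivial using dominance of $\mu$, then invoking \autoref{lem:morphism_k_points} --- is exactly what the paper does.

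For the second equality you take a different route from the paper. The paper appeals to the general results \cite[Propositions 2.7--2.9]{Zhu14}: $R\Gamma_c(\MV_{\lambda,\nu},\cA_\lambda)$ is concentrated in degree $\anbr{2\rho,\nu}$, with basis indexed by irreducible components of $\MV_{\lambda,\nu}$, whose number equals the dimension of the $\nu$-weight space in $V^\lambda$ (which is $1$ when $\nu=\lambda$). Your argument instead works directly: you restrict $\cA_\lambda$ to the open cell where it is the shifted constant sheaf, identify $\MV_{\lambda,\lambda}$ with a perfected affine space of dimension $\anbr{2\rho,\lambda}$ via the root-group decomposition, and compute $R\Gamma_c$ of the constant sheaf by hand. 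Your approach is more elementary and self-contained for this particular case, avoiding the heavier MV-cycle machinery; the paper's approach has the advantage of situating the computation inside the general framework that is needed anyway for other values of $\nu$.
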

\begin{proof}
We first show that $h_\mu^{\lambda,\lambda}$ is trivial. Since $\MV_{\lambda,\lambda}$ and $L^{\geq-s}/L^+\GG_a$ are both pfp perfect $k$-schemes, by \autoref{lem:morphism_k_points} and \autoref{lem:weyl_orbit_intersection} it suffices to check that
    \[ h_\mu^{\lambda,\lambda}(k): N(\cO)\varpi^{\lambda}G(\cO)/G(\cO) \to \varpi^{-s}\cO/\cO \]
sends every element to $0$. But this is clear from the definition of $h$ (see \autoref{lemma:definition_of_h_map}). Therefore
    \[ (h_\mu^{\lambda,\lambda})^*\cL_\psi = \QQellbar, \]
so the first equality holds. By \cite[Prop 2.7]{Zhu14}, $R\Gamma_c(\MV_{\lambda,\lambda},\cA_\lambda)$ is concentrated in degree $2\anbr{\rho,\lambda}$. The number of irreducible components of $\MV_{\lambda, \lambda}$ is equal to the dimension of the weight space $\lambda$ in the highest weight representation $V_\lambda$, \cite[Prop. 2.8]{Zhu14}. These irreducible components form a basis of the cohomology, \cite[Prop 2.9]{Zhu14}. Finally, observe that the the weight space of $\lambda$ in $V^\lambda$ is 1-dimensional.
\end{proof}

\subsection{The case of \texorpdfstring{$\nu=w\lambda$}{nu is wlambda}}
\label{dominant_case_weyl_orbit}
Now suppose $\lambda$ is dominant, $\mu$ is dominant, and $\nu = w\lambda$ for some $w \in W$. We prove \autoref{conj:full_CS} for minuscule and quasi-minuscule $\lambda$.

Recall from \autoref{def:minuscule_quasiminuscule} that $\lambda$ is minuscule if it is minimal for the Bruhat ordering on $X_*(T)_+ \setminus \set{0}$ and $\Omega(\lambda) = W \cdot \lambda$.

\begin{proposition}
\label{cohomology_on_minuscule_weyl_orbit_intersection}
If $\lambda \in X_*(T)_+$ is minuscule, then for all $w \in W$ and all $\mu \in X_*(T)_+$ such that $\mu + w\lambda \in X_*(T)_+$,
\[
    R\Gamma_c(\MV_{\lambda,w\lambda}, \cA_\lambda \otimes (h_\mu^{\lambda,w\lambda})^*(\cL_\psi)) =  R\Gamma_c(\MV_{\lambda,w\lambda}, \cA_\lambda) = \QQellbar[-\anbr{2\rho,w\lambda}](-\anbr{\rho, \lambda})
\]
\end{proposition}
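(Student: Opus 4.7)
The plan is to prove the two equalities in turn. For the first, $R\Gamma_c(\MV_{\lambda,w\lambda}, \cA_\lambda \otimes (h_\mu^{\lambda,w\lambda})^*\cL_\psi) = R\Gamma_c(\MV_{\lambda,w\lambda}, \cA_\lambda)$, it suffices to show that the pullback $(h_\mu^{\lambda,w\lambda})^*\cL_\psi$ is the trivial local system, and for this it is enough to show $h_\mu^{\lambda,w\lambda}$ factors through the zero section of $L^{\geq -s}\GG_a/L^+\GG_a$. By \autoref{lem:morphism_k_points} I only need to check this on $k$-points. By \autoref{lem:weyl_orbit_intersection}, every $k$-point of $\MV_{\lambda,w\lambda}$ has a representative of the form $n\varpi^{w\lambda}G(\cO)/G(\cO)$ with $n \in N(\cO)$, and from the formula in \autoref{lemma:definition_of_h_map} the map sends this to $h(\ad(\varpi^\mu)(n))$. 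Since $\mu$ is dominant, $\ad(\varpi^\mu)$ scales the $\alpha$-root subgroup by $\varpi^{\anbr{\alpha,\mu}}$ with $\anbr{\alpha,\mu} \geq 0$ for each $\alpha \in \Phi^+$, so $\ad(\varpi^\mu)$ preserves $L^+N$. Hence $h(\ad(\varpi^\mu)(n)) \in L^+\GG_a$, which is zero in $L\GG_a/L^+\GG_a$. This is essentially the same argument used in \autoref{p:dominant_equal_case_main_theorem}.

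For the second equality I exploit two features of the minuscule case. First, $\lambda$ minuscule means $\lambda$ is minimal in $X_*(T)_+ \setminus \set{0}$, so no strata $\Gr_{\lambda'}$ with $\lambda' < \lambda$ appear; thus $\Gr_{\leq\lambda} = \Gr_\lambda$ is smooth and $\cA_\lambda$ is simply $\QQellbar[\anbr{2\rho,\lambda}](\anbr{\rho,\lambda})$. Second, by \autoref{lem:mv_weyl_reduction_commutes} together with the minuscule condition $\anbr{w^{-1}\alpha,\lambda} \in \set{0,\pm 1}$ for every root $\alpha$, each factor $L^+N_\alpha/\varpi^{\anbr{w^{-1}\alpha,\lambda}} L^+N_\alpha$ in the product description of $\MV_{\lambda,w\lambda}$ is $(\AA^1)^{\pf}$. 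Hence $\MV_{\lambda,w\lambda} \cong (\AA^d)^{\pf}$, and by \autoref{p:MV_nonempty} the dimension is $d = \anbr{\rho,\lambda+w\lambda}$.

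The computation is then immediate from $R\Gamma_c((\AA^d)^{\pf},\QQellbar) = \QQellbar[-2d](-d)$: applying the shift and Tate twist coming from $\cA_\lambda$ gives
\[
    R\Gamma_c(\MV_{\lambda,w\lambda}, \cA_\lambda) = \QQellbar[\anbr{2\rho,\lambda} - 2\anbr{\rho,\lambda+w\lambda}](\anbr{\rho,\lambda} - \anbr{\rho,\lambda+w\lambda}) = \QQellbar[-\anbr{2\rho,w\lambda}](-\anbr{\rho,w\lambda}).
\]
I do not foresee a serious obstacle: the geometric structure of $\MV_{\lambda,w\lambda}$ in the minuscule case is already pinned down by \autoref{lem:mv_weyl_reduction_commutes}, the smoothness of $\Gr_\lambda$ for minuscule $\lambda$ is standard, and the character-sheaf triviality is the same calculation as in the $\nu = \lambda$ case. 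The only subtle point is remembering that ``minuscule'' forces $\Gr_{\leq\lambda}$ to coincide with the open cell so that $\cA_\lambda$ is literally constant up to a shift and twist — without that, the sheaf-theoretic part of the argument would require actual intersection-cohomology input rather than a reduction to $R\Gamma_c$ of affine space.
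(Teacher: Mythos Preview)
Your proof is correct, and in fact for the first equality it is more direct than the paper's. You invoke \autoref{lem:weyl_orbit_intersection} to get a representative $n\varpi^{w\lambda}$ with $n \in N(\cO)$ immediately, then observe that dominance of $\mu$ forces $\ad(\varpi^\mu)(n) \in N(\cO)$, so $h_\mu^{\lambda,w\lambda}$ is trivial on $k$-points. The paper instead writes a $k$-point as $nwp\varpi^\lambda$ with $n \in N(\cO)$ and $p \in \cP_\lambda(\cO)$ via the reduction isomorphism $\Gr_\lambda \xra\sim (\bar G/\bar P_\lambda)^{\pf}$, and then uses the minuscule hypothesis to check $\ad(\varpi^{-\lambda})\cP_\lambda(\cO) \subset G(\cO)$, so that $p$ can be absorbed into $G(\cO)$. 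The paper's detour through the parahoric is not needed here, but it is exactly the template for the quasi-minuscule case (\autoref{cohomology_on_quasi_minuscule_weyl_orbit_intersection}), where one must isolate the $N_{\lambda^\vee}(\varpi x)$ factor of $p$ and argue separately according to the sign of $w\lambda^\vee$; your shortcut via \autoref{lem:weyl_orbit_intersection} does not visibly extend to that setting.

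For the second equality the paper simply cites \cite[Proposition 2.7--2.9]{Zhu14}, whereas you unpack the geometry: $\Gr_{\leq\lambda} = \Gr_\lambda$ so $\cA_\lambda$ is a shifted constant sheaf, and \autoref{lem:mv_weyl_reduction_commutes} collapses $\MV_{\lambda,w\lambda}$ to perfected affine space of dimension $\anbr{\rho,\lambda+w\lambda}$. Your computation yields the Tate twist $(-\anbr{\rho,w\lambda})$, which is what the argument actually produces (and agrees with the quasi-minuscule statement); the $(-\anbr{\rho,\lambda})$ in the displayed statement appears to be a typo.
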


\begin{proof}
    The second equality follows from \cite[Proposition 2.7-2.9]{Zhu14}. The first equality will follow if we can show that $h_\mu^{\lambda,w\lambda}: \MV_{\lambda,w\lambda} \to L^{\geq-s}\GG_a/L^+\GG_a$ factors through the identity section $\Spec k \to L^{\geq-s}\GG_a/L^+\GG_a$.

    Since $\lambda$ is minuscule, by \cite[Corollary 1.24]{Zhu14} the reduction map gives an isomorphism
        \[ \Gr_{\le \lambda} = \Gr_\lambda \xra\sim (\bar G/\bar P_\lambda)^{\pf} \]
    which restricts to an isomorphism
        \[ S_{w\lambda} \cap \Gr_\lambda \xra\sim (\bar N w \bar P_\lambda/\bar P_\lambda)^{\pf} \]
    So any $k$-point in $S_{w\lambda} \cap \Gr_\lambda$ can be written in the form
    \[
        nwp \varpi^\lambda G(\cO) \text{ for } n \in N(\cO), p \in \cP_\lambda(\cO).
    \]
    By definition of $\cP_\lambda(\cO)$ and since $\lambda$ is minuscule, one checks that $\ad(\varpi^{-\lambda})\cP_\lambda(\cO) \subseteq G(\cO)$, so
        \[ nwp\varpi^\lambda G(\cO) = nw\varpi^\lambda G(\cO). \]
    Also, as $\ad(w) \varpi^\lambda = \varpi^{w \lambda}$, we may re-express this as
    \[
        nw\varpi^{\lambda} G(\cO) = n\varpi^{w\lambda} G(\cO)
    \]
    So since $n \in N(\cO)$, $h^{\lambda, w\lambda}_\mu$ maps $k$-points to the identity section. By \autoref{lem:morphism_k_points}, it factors through the identity section.
\end{proof}

Next we treat the case where $\lambda$ is quasi-minuscule. Recall from \autoref{def:minuscule_quasiminuscule} that $\lambda$ is quasi-minuscule if it is minimal for the Bruhat ordering on $X_*(T)_+ \setminus \set{0}$ and $\Omega(\lambda) = W\lambda \cup \set{0}$.

\begin{proposition}
\label{cohomology_on_quasi_minuscule_weyl_orbit_intersection}
If $\lambda \in X_*(T)_+$ is quasi-minuscule, then for all $w \in W$ and all $\mu \in X_*(T)_+$ such that $\mu + w\lambda \in X_*(T)_+$,
\[
    R\Gamma_c(\MV_{\lambda,w\lambda}, \cA_\lambda \otimes (h_\mu^{\lambda,w\lambda})^* \cL_\psi) =  R\Gamma_c(\MV_{\lambda,w\lambda}, \cA_\lambda) = \QQellbar[-\anbr{2\rho,w\lambda}](-\anbr{\rho, w\lambda})
\]
\end{proposition}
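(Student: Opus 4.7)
My plan is to follow the same two-step strategy used in the minuscule case \autoref{cohomology_on_minuscule_weyl_orbit_intersection}. First, I would show that $h_\mu^{\lambda,w\lambda}$ factors through the identity section of $L^{\geq -s}\GG_a/L^+\GG_a$, so that $(h_\mu^{\lambda,w\lambda})^*\cL_\psi$ is trivial and the first equality follows. Second, I would deduce the explicit formula $\QQellbar[-\anbr{2\rho,w\lambda}](-\anbr{\rho,w\lambda})$ from Zhu's cohomological calculations \cite[Propositions 2.7-2.9]{Zhu14}, exactly as in the minuscule case: since $w\lambda$ is an extremal weight of $V^\lambda$, its weight-space multiplicity is one, so the cohomology is concentrated in a single degree, and the degree/Tate-twist bookkeeping uses $\dim \MV_{\lambda,w\lambda} = \anbr{\rho,\lambda+w\lambda}$ together with the normalization of $\cA_\lambda$.

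To carry out the first step, I would reduce to $k$-points via \autoref{lem:morphism_k_points}, which applies since both $\MV_{\lambda,w\lambda}$ (by \autoref{lem:MV_is_pfp}) and $L^{\geq-s}\GG_a/L^+\GG_a$ (by \autoref{lem:GrGa-Shift}) are pfp perfect $k$-schemes. By \autoref{lem:weyl_orbit_intersection}, every $k$-point of $\MV_{\lambda,w\lambda}$ can be written in the form $nw\varpi^\lambda G(\cO) = n\varpi^{w\lambda}G(\cO)$ for some $n \in N(\cO) = L^+N(k)$, using that $w \in G(\cO)$. By \autoref{lemma:definition_of_h_map}, the value of $h_\mu^{\lambda,w\lambda}$ at this point is $h(\ad(\varpi^{\mu+w\lambda})(n))$. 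Since $\mu + w\lambda$ is dominant by hypothesis, $\ad(\varpi^{\mu+w\lambda})$ scales each positive root subgroup $L^+N_\alpha$ by $\varpi^{\anbr{\alpha,\mu+w\lambda}}$ with non-negative exponent, so it preserves $L^+N$; then $h$ maps $L^+N$ into $L^+\GG_a$, which is zero in $L\GG_a/L^+\GG_a$.

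The subtlety relative to the minuscule case, which I expect to be the main obstacle worth discussing, is that the reduction map $\MV_{\lambda,w\lambda} \to (\bar N w \bar P_\lambda / \bar P_\lambda)^{\pf}$ is no longer always an isomorphism in the quasi-minuscule setting: by \autoref{lem:quasi_minuscule_semiinfinite_orbit_bundle}, when $w\lambda^\vee \in \Phi_+$ it acquires an extra $\GG_a$-factor, and correspondingly the containment $\ad(\varpi^{-\lambda})\cP_\lambda(\cO) \subseteq G(\cO)$ invoked in the minuscule proof fails because of the unique coroot $\lambda^\vee$ with $\anbr{\lambda^\vee,\lambda}=2$. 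One might therefore worry that this extra factor produces non-trivial contributions to $h_\mu^{\lambda,w\lambda}$. The resolution is simply to avoid the parahoric reduction altogether and parametrize $k$-points directly via $L^+N$ from \autoref{lem:weyl_orbit_intersection}, which is valid for arbitrary dominant $\lambda$ and collapses both cases uniformly: every $k$-point already has a representative $n \in L^+N(k)$, and the key triviality $h(L^+N) \subseteq L^+\GG_a$ then does all the work.
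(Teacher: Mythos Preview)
Your approach is correct and in fact more direct than the paper's. The paper does not invoke \autoref{lem:weyl_orbit_intersection} here but instead writes a $k$-point as $nwp\varpi^\lambda G(\cO)$ with $n \in N(\cO)$ and $p \in \cP_\lambda(\cO)$, uses the Iwahori decomposition of $\cP_\lambda$ to factor $p = N_{\lambda^\vee}(\varpi x)\wt p$ with $\ad(\varpi^{-\lambda})\wt p \in G(\cO)$, and then splits into the cases $w\lambda^\vee \in \Phi_+$ and $w\lambda^\vee \in \Phi_-$; in the latter it must further appeal to $G(\cO) \cap N(F)N^-(F) = N(\cO)N^-(\cO)$ to force $x = 0$. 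Your route bypasses this case analysis entirely: once \autoref{lem:weyl_orbit_intersection} supplies a representative in $N(\cO)$, triviality of $h_\mu^{\lambda,w\lambda}$ on $k$-points is immediate, uniformly in $w$ --- and indeed the paper itself uses exactly this shortcut for $w = 1$ in \autoref{p:dominant_equal_case_main_theorem}. The parahoric form $nN_{w\lambda^\vee}(\varpi x)\varpi^{w\lambda}$ does resurface in \autoref{prop:hmu_on_pieces}, so the paper's computation is not wasted globally, but it is unnecessary for the present statement.

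One small correction: from the $k$-points description in \autoref{lemma:definition_of_h_map}, an element written as $n\varpi^{w\lambda}$ with $n \in N(F)$ is sent to $h(\ad(\varpi^\mu)(n))$, not to $h(\ad(\varpi^{\mu+w\lambda})(n))$. This does not affect your conclusion, since $\mu$ itself is dominant by hypothesis and $\ad(\varpi^\mu)$ therefore preserves $N(\cO)$; the relevant dominance is that of $\mu$, not of $\mu + w\lambda$.
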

\begin{proof}
As before, the second equality follows from \cite[Proposition 2.7-2.9]{Zhu14}. For the first equality we again show that $h_\mu^{\lambda,w\lambda}$ factors through the identity section. As in \autoref{cohomology_on_minuscule_weyl_orbit_intersection}, any $k$-point of $S_{w\lambda} \cap \Gr_\lambda$ can be written in the form $nwp \varpi^\lambda G(\cO)$ for $n \in N(\cO)$ and $p \in \cP_\lambda(\cO)$.

We claim that we may write
    \[ p = N_{\lambda^\vee}(\varpi x)\wt{p} \]
for some $x \in \cO$ and where $\wt{p}$ is such that $\ad(\varpi^{-\lambda})(\wt{p}) \in G(\cO)$. The
Iwahori decomposition says
    \[ \cP_\lambda(\cO) = (N(\cO) \cap \cP_\lambda(\cO))(\Norm_{G(F)}(T(F)) \cap \cP_\lambda(\cO))(N^-(\cO) \cap \cP_\lambda(\cO)). \]
Moreover, there is a decomposition
\[
    N(\cO) \cap \cP_\lambda(\cO) \xra\sim N_{\lambda^\vee}(\varpi\cO) \times \prod_{\alpha \in \Phi_+ \setminus \set{\lambda^\vee}} N_\alpha(\varpi^{c_\alpha}\cO), \quad c_\alpha = \begin{cases} 0 & \anbr{\alpha,\lambda} \leq 0 \\ 1 & \anbr{\alpha,\lambda} > 0 \end{cases}.
\]
We single out the first factor on the right side of the above isomorphism and show that the other factors in both decompositions are preserved by $\ad (\varpi^{-\lambda})$.

Since $\lambda$ is dominant, $\ad(\varpi^{-\lambda})N^-(\cO) \subset G(\cO)$. Furthermore one can show using Bruhat--Tits theory that
    \[ \ad(\varpi^{-\lambda})(\Norm_{G(F)}(T(F)) \cap \cP_\lambda(\cO)) \subset G(\cO). \]
Since $\lambda$ is quasi-minuscule, $\anbr{\alpha,\lambda} = 1$ for all $\alpha \in \Phi_+ \setminus \set{\lambda^\vee}$, so $\ad(\varpi^{-\lambda})N_\alpha(\varpi^{c_\alpha}\cO) \subset G(\cO)$ for all $\alpha \in \Phi_+ \setminus \set{\lambda^\vee}$. So $p = N_{\lambda^\vee}(\varpi x)\wt{p}$ with $\ad(\varpi^{-\lambda})  \wt{p} \in G(\cO)$ as desired. We can also assume that $x \not \in \varpi\cO \setminus \set{0}$; if $x = \varpi y$ for some nonzero $y \in \cO$, then
    \[ \ad(\varpi^{-\lambda})(N_{\lambda^\vee}(\varpi x)) = \ad(\varpi^{-\lambda})(N_{\lambda^\vee}(\varpi^2 y)) = N_{\lambda^\vee}(x) \in G(\cO). \]
So in conclusion, we can write
\[
    nwp\varpi^\lambda G(\cO) = n N_{w\lambda^\vee} (\varpi x) \varpi^{w\lambda }G(\cO)
\]
for some $x \in (\cO \setminus \varpi \cO) \cup \set{0}$. If $w\lambda^\vee \in \Phi_+$, then $nN_{w\lambda^\vee}(\varpi x) \in N(\cO)$, so
\[
    h_\mu^{\lambda,w\lambda}(nN_{w\lambda^\vee}(\varpi x) \varpi^{w\lambda} G(\cO)) = h_\mu(nN_{w\lambda^\vee}(\varpi x)) = 0.
\]
If $w \lambda^\vee \in \Phi_-$, then we will show that $x = 0$. Since our $k$-point lives in $S_{w\lambda}$, we can write
\begin{equation}\label{eq:1}
    m \varpi^{w\lambda} g = nN_{w \lambda^\vee} (\varpi x) \varpi^{w\lambda} \quad m \in N(F), g \in G(\cO),
\end{equation}
and
\[
    g = \ad (\varpi^{- w \lambda}) (m^{-1}n) N_{w \lambda^\vee} (w^{1- \anbr{w \lambda, w\lambda ^\vee}} x) = \ad (\varpi^{- w \lambda}) (m^{-1}n) N_{w \lambda^\vee}(\varpi^{-1}x).
\]
since $\anbr{w \lambda , w\lambda^\vee} = \anbr{\lambda,\lambda^\vee} = 2$. Note that\footnote{To see this, one can reduce to the case of $G = \GL_n$ by picking a faithful embedding $G \hra \GL_n$, then prove it directly for $\GL_n$ by induction on the indices of the rows and column in the $n \times n$ matrix.}
    \[ G(\cO) \cap N(F)N^-(F) = N(\cO)N^-(\cO). \]
Moreover, $N(\cO) \cap N^-(\cO) = \id$ so by uniqueness of the decomposition we see that $N_{w\lambda^\vee}(\varpi^{-1}x) \in N^-(\cO)$. But $x \not \in \varpi\cO$ so $x = 0$.
\end{proof}

\subsection{The case of \texorpdfstring{$\nu <\lambda$}{nu lt lambda}}
\label{sec:unequal_cocharacters}

We now assume $\lambda, \nu \in X_*(T)_+$ are dominant cocharacters satisfying $\lambda \neq \nu$. \autoref{p:MV_nonempty}, implies that $\MV_{\lambda, \nu}$ is only nonempty if $\nu < \lambda$, so we will assume this. Our goal is to show:

\begin{proposition}
\label{p:nu_le_lambda}
\[
    R\Gamma_c(\MV_{\lambda,\nu}, \cA_\lambda \otimes (h_0^{\lambda,\nu})^* \cL_\psi) = 0.
\]
\end{proposition}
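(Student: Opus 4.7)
The plan is to reduce this vanishing to the (quasi-)minuscule case, which has already been handled in \autoref{cohomology_on_minuscule_weyl_orbit_intersection} and \autoref{cohomology_on_quasi_minuscule_weyl_orbit_intersection} (with the zero orbit deferred to \autoref{sec:zero_orbit}), together with the non-dominant vanishing from \autoref{c:nondominant_vanishing}. The reduction proceeds via the convolution Grassmannian, following the outline in the introduction.

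Concretely, first fix a decomposition $\lambda = \lambda_1 + \cdots + \lambda_n$ into (quasi-)minuscule dominant cocharacters. By geometric Satake on $\Gr_G$ (established for the Witt vector affine Grassmannian in \cite{Zhu14}), $\cA_\lambda$ is a direct summand of $\cA_{\lambda_1} \star \cdots \star \cA_{\lambda_n} = m_!(\cA_{\lambda_1} \wt\boxtimes \cdots \wt\boxtimes \cA_{\lambda_n})$, where $m \colon \Gr_{\le \lambda_\bullet} \to \Gr_{\le \lambda}$ is the convolution morphism on the twisted product. By proper base change it suffices to prove the analogous vanishing after pulling everything back along $m$. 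The preimage stratifies as
\[
    m^{-1}(\MV_{\lambda,\nu}) = \bigsqcup_{\substack{\nu_\bullet \in X_*(T)^n \\ \nu_1+\cdots+\nu_n = \nu}} \MV_{\lambda_1,\nu_1} \wt\times \cdots \wt\times \MV_{\lambda_n,\nu_n},
\]
and along each stratum the pulled-back map $h_0^{\lambda,\nu} \circ m$ decomposes (via the additivity of $h$ and unraveling the conjugations $\ad(\varpi^{\mu_i})$ internal to the convolution) as a sum over $i$ of maps $h_{\mu_i}^{\lambda_i,\nu_i}$ with $\mu_i := \nu_{i+1} + \cdots + \nu_n$.

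Next I would invoke the auxiliary torsor construction of \autoref{auxiliary_torsors} to exhibit each twisted-product stratum as a trivial bundle over an honest direct product, over which the character sheaf decomposes as an external product. Since $\cL_\psi$ is multiplicative (\autoref{cor:lustzig_character_sheaves}), the cohomology on each stratum computes as
\[
    \bigotimes_{i=1}^n R\Gamma_c \bigl( \MV_{\lambda_i,\nu_i},\, \cA_{\lambda_i} \otimes (h_{\mu_i}^{\lambda_i,\nu_i})^* \cL_\psi \bigr).
\]
Each factor is controlled by a previous result: if some $\mu_i$ is non-dominant while $\mu_i + \nu_i$ remains dominant, the factor vanishes by \autoref{c:nondominant_vanishing}; otherwise $\lambda_i$ is (quasi-)minuscule and either $\nu_i \in W \cdot \lambda_i$ (handled by \autoref{cohomology_on_minuscule_weyl_orbit_intersection} or \autoref{cohomology_on_quasi_minuscule_weyl_orbit_intersection}) or $\nu_i = 0$ with $\lambda_i$ quasi-minuscule (handled by \autoref{sec:zero_orbit}), in which cases the cohomology is at most a single $\QQellbar$-line.

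The main obstacle is the combinatorial bookkeeping needed to show that the sum over admissible tuples $\nu_\bullet$ vanishes. A surviving contribution requires every partial sum $\mu_i$ to be dominant, every $\nu_i \in \Omega(\lambda_i)$, and $\sum \nu_i = \nu$. The strict inequality $\nu < \lambda$ forbids the trivial tuple $\nu_i = \lambda_i$, so at least one factor must be a nontrivial Weyl translate or zero; the task is to show this forces some $\mu_i$ to leave the dominant cone (triggering \autoref{c:nondominant_vanishing}), and to separately handle the quasi-minuscule zero-orbit contributions via \autoref{sec:zero_orbit}. The underlying technical difficulty — and the reason this does not follow from a direct product splitting as in equal characteristic \cite{NP00} — is the absence of a Birkhoff decomposition, which is precisely what necessitates the auxiliary torsor detour in the first place.
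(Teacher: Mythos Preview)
Your overall architecture---decompose into (quasi-)minuscule pieces, stratify the convolution preimage by tuples $\nu_\bullet$, split each twisted-product stratum via auxiliary torsors, and evaluate each tensor factor using the (quasi-)minuscule and non-dominant results---matches the paper. But the final combinatorial step has a genuine gap.

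You assert that ``the sum over admissible tuples $\nu_\bullet$ vanishes'' and propose to show that $\nu < \lambda$ forces some partial sum $\mu_i$ out of the dominant cone. This is false. There are many tuples $\nu_\bullet$ with $|\nu_\bullet| = \nu$, $\nu_i \in \Omega(\lambda_i)$, and all partial sums dominant; whenever some $\nu_i = 0$ for quasi-minuscule $\lambda_i$, \autoref{cohomology:zero_orbit} contributes a nonzero $\QQellbar^{|\Delta_{\lambda_i^\vee}^{\mu_{i-1}}|}$, and nothing makes these cancel. The total convolution cohomology
\[
    R\Gamma_c\bigl(\MV_{|\lambda_\bullet|,\nu},\, (\cA_{\lambda_1} \star \cdots \star \cA_{\lambda_n}) \otimes (h_0^{|\lambda_\bullet|,\nu})^*\cL_\psi\bigr)
\]
is generically nonzero. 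The paper's argument is instead a dimension count: the surviving tuples are exactly the dominant Littelmann $\lambda_\bullet$-paths from $0$ to $\nu$ (\autoref{cor:number_of_paths}), and the path model gives at least $\dim W_{\lambda_\bullet}^\nu$ of them. On the other hand, the convolution decomposes as $\bigoplus_{\xi \leq |\lambda_\bullet|} \cA_\xi \otimes W_{\lambda_\bullet}^\xi$, and the single summand $\xi = \nu$ already contributes $W_{\lambda_\bullet}^\nu$ in degree $2\anbr{\rho,\nu}$ by \autoref{p:dominant_equal_case_main_theorem}. Matching dimensions forces every other summand---in particular $\xi = \lambda$, where $W_{\lambda_\bullet}^\lambda \neq 0$ by \autoref{lem:minuscule_convolution}---to vanish.

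Two smaller corrections. Your shift $\mu_i := \nu_{i+1} + \cdots + \nu_n$ is backwards; unraveling the convolution (see \autoref{lem:splitting_h_map_under_resolution}) shows the $i$-th factor receives $h_{\mu_{i-1}}$ with $\mu_{i-1} = \nu_1 + \cdots + \nu_{i-1}$. And you do not need $|\lambda_\bullet| = \lambda$ exactly; \autoref{lem:minuscule_convolution} only supplies $\lambda \leq |\lambda_\bullet|$ with $W_{\lambda_\bullet}^\lambda \neq 0$, which is all the dimension argument requires.
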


This is the most technically involved step. We give the proof here, with some ingredients proven in \autoref{sec:break_down_of_cohomology} and \autoref{sec:zero_orbit}. We first reduce the problem to studying the geometry of $\MV_{\lambda,\nu}$ for $\lambda \in M$, \autoref{def:minuscule_quasiminuscule}, by using Zhu's geometric version of the PRV (Parthasarathy--Ranga Rao--Varadarajan) conjecture.

\begin{lemma}[{\cite[Lemma 2.16]{Zhu14}}]
\label{lem:minuscule_convolution}
Given $\lambda \in X_*(T)_{+}$ there exists a sequence of cocharacters $\lambda_\bullet = (\lambda_1,\dots,\lambda_n)$ such that
\begin{enumerate}
    \item $\lambda_i \in M$ for $i = 1,\dots,n$,
    \item $\lambda \leq |\lambda_\bullet|$, and
    \item $W_{\lambda_\bullet}^\lambda \neq 0$ in the decomposition
    \[
        \cA_{\lambda_1} \star \cdots \star \cA_{\lambda_n} = \bigoplus_{\substack{\xi \in X_*(T)_+, \\ \xi \leq |\lambda_\bullet|}} \cA_\xi \otimes W_{\lambda_\bullet}^\xi.
    \]
in the Satake category $\smbr{P_{L^+G}(\Gr_G), \star}$ with its usual convolution structure. Here, the dimension of $W^\xi_{\lambda_\bullet}$ is equal to the multiplicity of $\cA_\xi$ in the convolution.
\end{enumerate}
\end{lemma}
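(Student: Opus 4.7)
The plan is to translate the statement into a question about representations of the dual group via the geometric Satake equivalence, and then invoke the PRV conjecture.

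First, I would apply the geometric Satake equivalence from \cite{Zhu14}, which identifies $(P_{L^+G}(\Gr_G), \star)$ with $(\Rep(\hat G), \otimes)$ as symmetric monoidal categories via $\cA_\mu \leftrightarrow V^\mu$. Under this equivalence the decomposition in item (3) corresponds to the decomposition of $V^{\lambda_1} \otimes \cdots \otimes V^{\lambda_n}$ into irreducibles, with $W_{\lambda_\bullet}^\xi \simeq \Hom_{\hat G}(V^\xi, V^{\lambda_1} \otimes \cdots \otimes V^{\lambda_n})$. The lemma thus becomes a purely representation-theoretic statement: find $\lambda_1, \ldots, \lambda_n \in M$ with $\lambda \le |\lambda_\bullet|$ such that $V^\lambda$ is a direct summand of $V^{\lambda_1} \otimes \cdots \otimes V^{\lambda_n}$.

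Next, I would construct a candidate sequence combinatorially. For each simple root $\alpha_i$ one can find some $\mu_i \in M$ with $\anbr{\alpha_i, \mu_i} > 0$: collectively the elements of $M$ pair positively with every simple root, as one checks type-by-type from the classification of minuscule and quasi-minuscule coweights. Writing $\lambda = \sum_i c_i \omega_i^\vee$ and taking enough copies of each associated $\mu_i$, one can arrange that $|\lambda_\bullet| - \lambda \in \NN \Phi_+^\vee$, establishing condition (2) together with (1).

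Finally, to verify that $V^\lambda$ is a summand of $V^{\lambda_1} \otimes \cdots \otimes V^{\lambda_n}$ (condition (3)), I would invoke the PRV conjecture (theorem of Kumar and Mathieu): if there exist Weyl elements $w_i$ with $w_1 \lambda_1 + \cdots + w_n \lambda_n = \lambda$, then $V^\lambda$ is a direct summand of the tensor product. Such $w_i$ can be extracted greedily from the relation $\lambda \le |\lambda_\bullet|$ together with the dominance of $\lambda$, possibly after enlarging the sequence $\lambda_\bullet$ by further elements of $M$. The main obstacle is precisely this last step: producing the Weyl elements compatibly with the combinatorial constraint. A cleaner geometric alternative, likely closer to Zhu's original argument, is to apply the decomposition theorem to the proper convolution map $m \colon \Gr_{\le \lambda_1} \wt{\times} \cdots \wt{\times} \Gr_{\le \lambda_n} \to \Gr_G$: since this map is semismall, its image contains $\Gr_\lambda$ whenever $\lambda \le |\lambda_\bullet|$, and a dimension count for the fiber over a general point of $\Gr_\lambda$ forces $\cA_\lambda$ to appear as a summand of $m_*(\cA_{\lambda_1} \wt{\boxtimes} \cdots \wt{\boxtimes} \cA_{\lambda_n})$ with positive multiplicity, yielding $W_{\lambda_\bullet}^\lambda \neq 0$.
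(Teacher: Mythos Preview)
The paper does not give its own proof of this lemma: it is stated with a citation to \cite[Lemma 2.16]{Zhu14} and immediately used. The surrounding text calls it ``Zhu's geometric version of the PRV conjecture,'' so your instinct to reduce to PRV via geometric Satake is exactly the intended mechanism.

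That said, your argument has a genuine gap in the combinatorial step. The construction in your second paragraph produces $\lambda_\bullet$ with $|\lambda_\bullet| \ge \lambda$, but PRV requires the stronger condition that $\lambda = \sum_i w_i\lambda_i$ for some $w_i \in W$. You acknowledge this as ``the main obstacle'' and propose to extract the $w_i$ ``greedily'' from the inequality $\lambda \le |\lambda_\bullet|$; this is not a proof, and in fact the inequality alone does not guarantee such $w_i$ exist. The correct combinatorial input is that every dominant coweight can be written directly as a sum of elements of $\bigcup_{\mu \in M} W\mu$: the minuscule coweights represent every nonzero class in $X_*(T)/\ZZ\Phi^\vee$, and any dominant element of the coroot lattice is an $\NN$-combination of simple coroots, each of which lies in the Weyl orbit of the quasi-minuscule coweight. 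One should build $\lambda_\bullet$ from such an expression, after which PRV applies immediately.

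Your proposed geometric alternative does not sidestep this. Semismallness of $m$ plus $\Gr_\lambda \subset \mathrm{im}(m)$ does not force $\cA_\lambda$ to appear: you need $\Gr_\lambda$ to be a \emph{relevant} stratum, i.e.\ the generic fiber over $\Gr_\lambda$ must have dimension exactly $\anbr{\rho, |\lambda_\bullet| - \lambda}$. That relevance is equivalent to $W_{\lambda_\bullet}^\lambda \neq 0$, so invoking it without an independent dimension computation is circular. The dimension computation, in turn, again comes down to exhibiting a point in the fiber, which is precisely the expression $\lambda = \sum_i w_i\lambda_i$.
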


\begin{proof}[Proof of \autoref{p:nu_le_lambda}]
Fix $\lambda \in X_*(T)_+$. By \autoref{lem:minuscule_convolution} there exists a sequence $\lambda_\bullet = (\lambda_1,\dots,\lambda_n)$ such that
\begin{align*}
    & R\Gamma_c(\MV_{|\lambda_\bullet|,\nu}, (\cA_{\lambda_1} \star \cdots \star \cA_{\lambda_n}) \otimes (h_0^{|\lambda_\bullet|,\nu})^*(\cL_\psi)) \\
    &= \bigoplus_{\substack{\xi \in X_*(T)_+, \\ \xi \leq |\lambda_\bullet|}} R\Gamma_c(\MV_{\xi,\nu}, \cA_\xi \otimes (h_0^{\xi,\nu})^*(\cL_\psi)) \otimes W_{\lambda_\bullet}^\xi
\end{align*}

So if we can show that the direct factor inclusion (for $\xi = \nu$, noting that by assumption $\nu < \lambda \leq |\lambda_\bullet|$))
\[
    R\Gamma_c(\MV_{\nu,\nu}, \cA_\nu \otimes (h_0^{\nu,\nu})^*\cL_\psi) \otimes W_{\lambda_\bullet}^\nu \to R\Gamma_c(\MV_{|\lambda_\bullet|,\nu}, (\cA_{\lambda_1} \star \cdots \star \cA_{\lambda_n}) \otimes (h_0^{|\lambda_\bullet|,\nu})^*\cL_\psi)
\]
is a quasi-isomorphism, then it follows from the above direct sum decomposition that (with $\xi = \lambda$)
    \[ R\Gamma_c(\MV_{\lambda,\nu}, \cA_\lambda \otimes (h_0^{\lambda,\nu})^*\cL_\psi) \simeq 0 \]
as desired. By \autoref{p:dominant_equal_case_main_theorem}
\[
    R\Gamma_c(\MV_{\nu,\nu}, \cA_\nu \otimes (h_0^{\nu,\nu})^*\cL_\psi) \otimes W_{\lambda_\bullet}^\nu \simeq W_{\lambda_\bullet}^\nu[-2\anbr{\rho,\nu}] \smbr{-\anbr{\rho,\nu}}
\]
so it suffices to show that
\begin{equation}
\label{eq:count_whole_cohomology}
    \dim H^i(\MV_{|\lambda_\bullet|,\nu}, (\cA_{\lambda_1} \star \cdots \star \cA_{\lambda_n}) \otimes (h_0^{|\lambda_\bullet|,\nu})^*(\cL_\psi)) = \begin{cases}
        0 & i \neq 2\anbr{\rho,\nu} \\ \dim W_{\lambda_\bullet}^\nu & i = 2\anbr{\rho,\nu}
    \end{cases}
\end{equation}

\begin{definition}
If $\nu_\bullet = (\nu_1,\dots,\nu_n) \in X_*(T)$ satisfies $\nu = |\nu_\bullet|$, then we let
    \[ \mu_0 = 0 \text{ and } \mu_i := \nu_1 + \cdots + \nu_i \text{ for } i = 1,\dots,n \]
\end{definition}

Then by \autoref{p:break_down_of_cohomology} (the main result of the next section) we have a decomposition
\[
    R\Gamma_c(\MV_{|\lambda_\bullet|,\nu}, (\cA_{\lambda_1} \star \cdots \star \cA_{\lambda_n}) \otimes (h_0^{|\lambda_\bullet|,\nu})^*(\cL_\psi))
    = \bigoplus_{|\nu_\bullet|=\nu} \bigotimes_{i=1}^n  R\Gamma_c(\MV_{\lambda_i, \nu_i}, \cA_{\lambda_i} \otimes (h^{\lambda_i,\nu_i}_{\mu_{i-1}})^* \cL_\psi)
\]
So it remains to compute the right hand side of the above equality. We may make the following two assumptions on $\nu_\bullet$.
\begin{itemize}
    \item \underline{Every $\mu_i$ is dominant}. If not, then some $\mu_{i-1}$ is non-dominant. In this case
        \[ R\Gamma_c(\MV_{\lambda_i,\nu_i},\cA_{\lambda_i} \otimes (h_{\mu_{i-1}}^{\lambda_i,\nu_i})^* \cL_\psi) = 0 \]
    by \autoref{c:nondominant_vanishing}, so the whole tensor product vanishes as well.
    \item \underline{Either $\nu_i = w\lambda_i$ for some $w \in W$, or $\nu_i = 0$}. Recall that if $\lambda_i \in M$ then $\MV_{\lambda_i,\nu}$ is nonempty if and only if $\nu \in W \cdot \lambda_i \cup \set{0}$ (noting that $\nu$ can only equal $0$ if $\lambda_i$ is quasi-minuscule).
\end{itemize}

By \autoref{cohomology_on_quasi_minuscule_weyl_orbit_intersection} and \autoref{cohomology:zero_orbit} each tensor product
\[
    \bigotimes_{i=1}^n R\Gamma_c(\MV_{\lambda_i,\nu_i}, \cA_{\lambda_i} \otimes (h_{\mu_{i-1}}^{\lambda_i,\nu_i})^*\cL_\psi)
\]
is concentrated in degree
    \[ \sum_{i=1}^n 2\anbr{\rho,\nu_i} = 2\anbr{\rho,\nu} \]
and the dimension of the tensor product is equal to (see \autoref{def:notations_for_section_zero_orbit})
    \[ \prod_{i : \nu_i = 0} |\Delta_{\lambda^\vee}^{\mu_{i-1}}|. \]
We deduce from \autoref{cor:number_of_paths}
\[
    H^i_c(\MV_{\lambda_\bullet,\nu_\bullet}, \cA_{\lambda_\bullet} \otimes h^*\cL_\psi) =
    \begin{cases}
         0 &  i \not= 2\anbr{\rho, \nu} \\
         |\crbr{\tx{dominant $\lambda_\bullet$-paths for $\nu_\bullet$} }| & i = 2 \anbr{\rho, \nu}
    \end{cases}
\]
See \autoref{paths} for a recollection of dominant paths.
Finally, in \cite[Proposition 9.4, Lemme 9.5]{Ngo98} it is shown that
\[
    \dim W^{\nu}_{\lambda_\bullet} \ge  |\crbr{\text{dominant $\lambda_\bullet$ paths from $0$ to $\nu$} }|.
\]
which, for dimension reasons, verifies \autoref{eq:count_whole_cohomology}.
\end{proof}

\subsection{Recollection on Littelmann paths}
\label{paths} In this section we recall the basics of Littelmann paths, see \cite[Section 9]{NP00} or \cite{lit94}.

\begin{de}
\label{def:paths_combinatorics}
For any sequence $\lambda_\bullet = (\lambda_1,\dots,\lambda_n) \subset M$ and any $\nu_\bullet = (\nu_1,\dots,\nu_n) \subset X_*(T)$, we call the following combinatorial data a \textit{$\lambda_\bullet$-path (for $\nu_\bullet$)}:
\begin{itemize}
    \item A sequence of vertices $\mu_1,\dots,\mu_n \in X_*(T)$ such that for all $i = 1, \ldots, n$ we have $\nu_i = \mu_i - \mu_{i-1} \in \Omega(\lambda_i)$ (where $\mu_0 = 0$).
    \item For $i = 1,\dots,n$, maps
    \[
        p_i:[0,1] \ra X_*(T) \otimes_\ZZ \RR
    \]
    satisfying the following properties:
    \begin{enumerate}
        \item if $\mu_{i-1} \neq \mu_i$,
        \[
            p_i(t) = (1-t) \mu_{i-1} + t \mu_i.
        \]
        \item if $\mu_{i-1} = \mu_i$ then
        \[
        p_i(t) =
        \begin{cases}
            \mu_{i-1} - t \alpha^\vee_i & 0 \le t\le 1/2 \\
            \mu_{i-1}+(t-1) \alpha^\vee_i & 1/2 \le t \le 1
        \end{cases}
        \]
        where $\alpha^\vee_i \in \Delta^\vee_{\lambda_i}$, i.e. $\alpha_i^\vee$ is simple coroot $W$-conjugate to $\lambda_i$.
    \end{enumerate}
\end{itemize}
If $|\nu_\bullet| = \nu$ we say that the path is a \textit{$\lambda_\bullet$-path from $0$ to $\nu$}.
\end{de}

\begin{de}
\label{def:dominant_path}
    A $\lambda_\bullet$-path is \textit{dominant} if the image of $p_i$ lies in $(X_*(T) \otimes \RR)_+$ (the dominant Weyl chamber) for $i=1,\dots, n$.
\end{de}

\begin{lem}
Fix a $\lambda_\bullet$-path. The image of $p_i$ lies in $(X_*(T) \otimes \RR)_+$ if and only if
\begin{itemize}
    \item each $\mu_i$ is dominant, and
    \item $\anbr{\alpha_i,\mu_{i-1}} \ge 1$.
\end{itemize}
\end{lem}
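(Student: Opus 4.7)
The plan is to split on the two cases of \autoref{def:paths_combinatorics} and exploit the convexity of the dominant chamber $(X_*(T)\otimes\RR)_+$. In both cases the image of $p_i$ is a straight line segment in $X_*(T)\otimes\RR$, so the image lies in the dominant chamber if and only if the two endpoints of that segment do; dominance of all interior points then follows by convexity.

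In the first case ($\mu_{i-1}\neq\mu_i$), the map $p_i$ is the linear segment from $\mu_{i-1}$ to $\mu_i$, so the image lies in $(X_*(T)\otimes\RR)_+$ exactly when both $\mu_{i-1}$ and $\mu_i$ are dominant. Note that $\alpha_i$ is not even defined here, so the second bullet of the statement should be read as vacuous for such $i$.

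In the second case ($\mu_{i-1}=\mu_i$), the image of $p_i$ is the segment joining $\mu_{i-1}$ and $\mu_{i-1}-\tfrac{1}{2}\alpha_i^\vee$, traversed down and back. Dominance of $\mu_{i-1}$ accounts for one endpoint. For the other endpoint I would pair against each simple root $\beta$: when $\beta\neq\alpha_i$ the off-diagonal Cartan integer $\anbr{\beta,\alpha_i^\vee}$ is $\leq 0$, so $\anbr{\beta,\mu_{i-1}-\tfrac{1}{2}\alpha_i^\vee}\geq\anbr{\beta,\mu_{i-1}}\geq 0$ automatically once $\mu_{i-1}$ is dominant; when $\beta=\alpha_i$, the relation $\anbr{\alpha_i,\alpha_i^\vee}=2$ gives $\anbr{\alpha_i,\mu_{i-1}-\tfrac{1}{2}\alpha_i^\vee}=\anbr{\alpha_i,\mu_{i-1}}-1$, so dominance at $\beta=\alpha_i$ is equivalent to $\anbr{\alpha_i,\mu_{i-1}}\geq 1$. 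Combining the two cases yields the claimed characterization.

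There is no real obstacle: the statement reduces to convexity together with the standard nonpositivity of Cartan integers between distinct simple roots and simple coroots. The only care needed is bookkeeping about when $\alpha_i$ is defined and interpreting the second bullet as vacuous for indices with $\mu_{i-1}\neq\mu_i$.
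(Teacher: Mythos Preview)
Your argument is correct and follows essentially the same route as the paper: reduce to the endpoints by convexity of the dominant chamber, then use the nonpositivity of off-diagonal Cartan integers to see that only the pairing with $\alpha_i$ gives a nontrivial constraint. If anything, your write-up is slightly more careful than the paper's in explicitly separating the $\mu_{i-1}\neq\mu_i$ case and noting that the second bullet is vacuous there.
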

\begin{proof}
The condition that each $\mu_i$ is dominant is clear from the definition. Since the dominant Weyl chamber is convex, the condition on the image is thus equivalent to
\[
    \anbr{\beta, \mu_{i-1} - \alpha_i^\vee/2} \ge 0 \quad \beta \in \Delta
\]
This is equivalent to
\[
    \anbr{\beta, \mu_{i-1}} \geq \frac{1}{2}\anbr{\beta, \alpha_i^\vee} \quad \beta \in \Delta
\]
If $\beta = \alpha_i$, this translates to
\[
    \anbr{\alpha_i, \mu_{i-1}} \ge 1.
\]
If $\beta \not= \alpha_i$ then $\anbr{\beta, \alpha_i^\vee} \le 0$ by considering the Cartan matrix, so the condition is vacuous.
\end{proof}

\begin{definition}
\label{def:notations_for_section_zero_orbit}
\[
    \Delta_{\lambda^\vee} := \crbr{\alpha \in \Delta : \alpha = w\lambda^\vee \text{ for some } w \in W}
\]
denote the set of simple roots Weyl-conjugate to $\lambda^\vee$. If $\mu \in X_*(T)$ we let
\[
    \Delta^\mu_{\lambda^\vee} := \crbr{\alpha \in \Delta_{\lambda^\vee} : \anbr{\alpha, \mu} \geq 1}.
\]
\end{definition}

\begin{corollary}
\label{cor:number_of_paths}
The number of dominant $\lambda_\bullet$-paths for $\nu_\bullet$ is
    \[ \prod_{i : \nu_i = 0} |\Delta_{\lambda_i^\vee}^{\mu_{i-1}}|. \]
\end{corollary}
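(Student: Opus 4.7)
The plan is a direct counting argument based on the preceding lemma. By \autoref{def:paths_combinatorics}, a $\lambda_\bullet$-path for a fixed $\nu_\bullet$ is determined entirely by a choice, for each index $i$ with $\nu_i = 0$ (equivalently $\mu_{i-1} = \mu_i$), of a simple coroot $\alpha_i^\vee \in \Delta^\vee_{\lambda_i}$; for indices with $\nu_i \neq 0$, the piece $p_i$ is the uniquely determined straight-line segment from $\mu_{i-1}$ to $\mu_i$, and no choice is made. In particular, the choices made at distinct indices are independent.

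Applying the preceding lemma piece by piece, dominance of the whole path is equivalent to the conjunction of two conditions: every $\mu_i$ is dominant, and for each $i$ with $\nu_i = 0$, the chosen $\alpha_i^\vee$ satisfies $\anbr{\alpha_i, \mu_{i-1}} \geq 1$. The first condition depends only on $\nu_\bullet$ and matches the standing hypothesis imposed in the proof of \autoref{p:nu_le_lambda}. Under that hypothesis, the count of dominant $\lambda_\bullet$-paths for $\nu_\bullet$ equals
\[
    \prod_{i : \nu_i = 0} \big|\{\alpha_i^\vee \in \Delta^\vee_{\lambda_i} : \anbr{\alpha_i, \mu_{i-1}} \geq 1\}\big|.
\]

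It then only remains to identify this product with $\prod_{i : \nu_i = 0} |\Delta_{\lambda_i^\vee}^{\mu_{i-1}}|$, which is purely a notational translation. Whenever $\nu_i = 0$, the constraint $0 \in \Omega(\lambda_i)$ together with $\lambda_i \in M$ forces $\lambda_i$ to be quasi-minuscule, so by \autoref{alternative_characterization_quasi_minuscule} $\lambda_i$ is a coroot and $\lambda_i^\vee$ is the dual root. The Weyl-equivariant bijection $\alpha \leftrightarrow \alpha^\vee$ between simple roots and simple coroots then carries $\Delta^\vee_{\lambda_i}$ (simple coroots $W$-conjugate to $\lambda_i$) bijectively onto $\Delta_{\lambda_i^\vee}$ (simple roots $W$-conjugate to $\lambda_i^\vee$), and preserves the pairing with the coweight $\mu_{i-1}$. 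This identifies the two index sets factor-by-factor and gives the stated formula. There is no substantive obstacle here; essentially all the work is already in the preceding lemma, and this step is pure bookkeeping.
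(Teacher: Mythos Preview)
Your proposal is correct and is exactly the intended deduction: the paper gives no explicit proof of this corollary, treating it as immediate from the preceding lemma, and your argument spells out precisely that implication together with the necessary notational translation between $\Delta^\vee_{\lambda_i}$ and $\Delta_{\lambda_i^\vee}$. Your observation that the formula is only literally correct under the standing assumption that all $\mu_i$ are dominant (otherwise both sides should be read as zero, though the product need not be) is accurate and matches how the corollary is actually used in the proof of \autoref{p:nu_le_lambda}.
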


\newpage
\section{Breaking down the convolution}
\label{sec:break_down_of_cohomology}

The goal of this section is to prove the following statement.

\begin{proposition}
\label{p:break_down_of_cohomology} Let $\lambda_\bullet = (\lambda_1,\ldots, \lambda_n)$ be a sequence of quasi-minuscule cocharacters.
If $\nu_\bullet = (\nu_1,\dots,\nu_n)$ is any tuple of cocharacters, then
\begin{align*}
    R\Gamma_c(\MV_{|\lambda_\bullet|,\nu}, (\cA_{\lambda_1} \star \cdots \star \cA_{\lambda_n}) \otimes (h_0^{|\lambda_\bullet|,\nu})^*(\cL_\psi))
    & \simeq \bigoplus_{|\nu_\bullet|=\nu} \bigotimes_{i=1}^n R\Gamma_c(\MV_{\lambda_i, \nu_i}, \cA_{\lambda_i} \otimes (h_{\mu_{i-1}}^{\lambda_i, \nu_i})^* \cL_\psi)
\end{align*}
where $\mu_i = \sum_{k=1}^i \nu_k$.
\end{proposition}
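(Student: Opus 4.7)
The strategy is to pass to the convolution Grassmannian, decompose its intersection with the preimage of $S_\nu$ into twisted products indexed by tuples $\nu_\bullet$ with $|\nu_\bullet|=\nu$, match the IC sheaf and character sheaf with external twisted tensor products on each stratum, and then trivialize the twist via an auxiliary torsor in order to apply K\"unneth.

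First, I would recall the convolution diagram
\[
    \Gr_{\le\lambda_1} \times \cdots \times \Gr_{\le\lambda_n} \xla{q} L^+G \times \cdots \times L^+G \xra{p} \Gr_{\le\lambda_\bullet}^{\tilde\times} \xra{m} \Gr_{\le|\lambda_\bullet|}
\]
with $m$ proper, and the definition $\cA_{\lambda_1} \star \cdots \star \cA_{\lambda_n} = m_!(\cA_{\lambda_1}\tilde\boxtimes \cdots \tilde\boxtimes \cA_{\lambda_n})$. Using the Iwasawa decomposition successively on each factor, the preimage $m^{-1}(\MV_{|\lambda_\bullet|,\nu})$ decomposes as a disjoint union of locally closed twisted products $\MV^{\tilde\times}_{\lambda_\bullet,\nu_\bullet}$ indexed by sequences $\nu_\bullet$ with $|\nu_\bullet|=\nu$. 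Proper base change for $m$ then gives
\[
    R\Gamma_c\bigl(\MV_{|\lambda_\bullet|,\nu},\, (\cA_{\lambda_1}\star\cdots\star\cA_{\lambda_n}) \otimes (h_0^{|\lambda_\bullet|,\nu})^*\cL_\psi\bigr) \simeq \bigoplus_{|\nu_\bullet|=\nu} R\Gamma_c\bigl(\MV^{\tilde\times}_{\lambda_\bullet,\nu_\bullet},\, \cA_{\lambda_1}\tilde\boxtimes\cdots\tilde\boxtimes\cA_{\lambda_n} \otimes m^*(h_0^{|\lambda_\bullet|,\nu})^*\cL_\psi\bigr).
\]

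Next I would identify the pullback of the character sheaf along $m$. Writing a $k$-point of the $\nu_\bullet$-stratum as $(n_1\varpi^{\nu_1},\ldots,n_n\varpi^{\nu_n})$ with $n_i \in N(F)$, the telescoping identity
\[
    n_1\varpi^{\nu_1} n_2 \varpi^{\nu_2}\cdots n_n\varpi^{\nu_n} = n_1 \cdot \ad(\varpi^{\mu_1})(n_2) \cdots \ad(\varpi^{\mu_{n-1}})(n_n) \cdot \varpi^\nu
\]
together with the definition of $h_\mu$ in \autoref{eq:the_h_map} shows that $m^*(h_0^{|\lambda_\bullet|,\nu})^*\cL_\psi$ is (twist-)externally the tensor product of the local systems $(h_{\mu_{i-1}}^{\lambda_i,\nu_i})^*\cL_\psi$ on each factor $\MV_{\lambda_i,\nu_i}$. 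To turn this into a genuine external product (and simultaneously to turn $\cA_{\lambda_1}\tilde\boxtimes\cdots\tilde\boxtimes \cA_{\lambda_n}$ into $\cA_{\lambda_1}\boxtimes\cdots\boxtimes\cA_{\lambda_n}$) I would invoke the auxiliary-torsor construction referenced in the introduction (the section \textit{auxiliary\_torsors}): there is a fiber bundle $P \to \MV^{\tilde\times}_{\lambda_\bullet,\nu_\bullet}$ whose total space admits an isomorphism to $P\times_{\MV^{\tilde\times}_{\lambda_\bullet,\nu_\bullet}}\prod_i \MV_{\lambda_i,\nu_i}$ compatibly with all the equivariance data. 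Smooth base change along this torsor, plus the fact that a local system is determined by its pullback to a torsor, upgrades the twisted-tensor equality to a genuine external tensor product of constructible sheaves on $\prod_i \MV_{\lambda_i,\nu_i}$.

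Finally, once everything lives on the honest product, the K\"unneth formula gives
\[
    R\Gamma_c\Bigl(\prod_i \MV_{\lambda_i,\nu_i},\, \boxtimes_i\bigl(\cA_{\lambda_i}\otimes (h_{\mu_{i-1}}^{\lambda_i,\nu_i})^*\cL_\psi\bigr)\Bigr) \simeq \bigotimes_i R\Gamma_c\bigl(\MV_{\lambda_i,\nu_i},\,\cA_{\lambda_i}\otimes (h_{\mu_{i-1}}^{\lambda_i,\nu_i})^*\cL_\psi\bigr),
\]
which combined with the direct-sum decomposition over $\nu_\bullet$ yields the statement.

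The main obstacle is the step where one must replace the twisted product $\MV^{\tilde\times}_{\lambda_\bullet,\nu_\bullet}$ with the honest product. In equal characteristic \cite{NP00} this is handled by a splitting of the twisted product coming from the negative loop group / Birkhoff decomposition, which is unavailable here; one must instead carry out the torsor construction described in \autoref{auxiliary_torsors} and check carefully that both the IC sheaf and the character sheaf become genuine external products after pulling back. The other bookkeeping (proper base change, verifying the telescoping formula for $h$, and K\"unneth) is routine by comparison.
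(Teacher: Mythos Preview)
Your overall architecture matches the paper's: projection formula to pass to the convolution space, stratify $m^{-1}(\MV_{|\lambda_\bullet|,\nu})$ by the $\MV_{\lambda_\bullet,\nu_\bullet}$, untwist each piece via the auxiliary $L^rN$-torsors of \autoref{auxiliary_torsors}, and finish with K\"unneth. The telescoping computation for $h$ and the torsor argument are exactly what the paper does in \autoref{lem:splitting_resolution} and \autoref{lem:splitting_h_map_under_resolution}.

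There is, however, a genuine gap at the step where you pass from the stratification to the direct sum. You write that ``proper base change for $m$ then gives'' the decomposition
\[
    R\Gamma_c\bigl(\MV_{|\lambda_\bullet|,\nu},\dots\bigr) \simeq \bigoplus_{|\nu_\bullet|=\nu} R\Gamma_c\bigl(\MV^{\tilde\times}_{\lambda_\bullet,\nu_\bullet},\dots\bigr),
\]
but proper base change only buys you $R\Gamma_c$ on $m^{-1}(\MV_{|\lambda_\bullet|,\nu})$. The pieces $\MV_{\lambda_\bullet,\nu_\bullet}$ are merely a locally closed stratification of this preimage, not a scheme-theoretic coproduct, so a priori you only get a filtration (the spectral sequence of \autoref{p:spectral_sequence_for_stratification}), not a direct sum. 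The paper closes this gap by invoking \autoref{p:collapse_of_ss}: one first establishes, via the torsor/K\"unneth argument, that each stratum's cohomology is the tensor product $\bigotimes_i R\Gamma_c(\MV_{\lambda_i,\nu_i},\dots)$, and then appeals to the (quasi-)minuscule computations (\autoref{cohomology_on_minuscule_weyl_orbit_intersection}, \autoref{cohomology_on_quasi_minuscule_weyl_orbit_intersection}, \autoref{cohomology:zero_orbit}) to see that each such tensor product sits in the single degree $2\anbr{\rho,\nu}$. This parity/degree concentration forces all differentials in the stratification spectral sequence to vanish. Without this input the direct-sum splitting is unjustified, and your ``routine bookkeeping'' remark undersells the point: the logical order is \emph{first} untwist and compute each stratum, \emph{then} use that computation to split the filtration.
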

\begin{proof}
The proof is contained in \autoref{section:proof_of_breakdown}.
\end{proof}

First we recall the convolution structure on $\Gr_G$ and its relevant subspaces.

\subsection{Convolution Grassmannian}

\label{convolution_products}

\begin{definition}\label{def:twisted_product}
If $K$ is an affine group scheme and $E \ra Y$ is a $K$-torsor, then for any $K$-space $X$ (i.e., a scheme $X$ with a left action of $K$) we can form the \textit{twisted product} $(E \ra Y) \tilde\times X$ (usually just denoted $Y \tilde\times X$ for simplicity) as the contracted product
    \[ E \times^K X \to Y \]
which is an \'etale-locally trivial fiber bundle over $Y$ whose fibers are isomorphic to $X$.
\end{definition}

For us, the most important instance of this construction is the convolution Grassmannian. Let $H$ denote a smooth affine group scheme over $\cO$, and consider the $L^+H$-torsor $LH \ra \Gr_H$. Then $X = \Gr_H$ is an $L^+H$-space, so we can construct the twisted product $\Gr_H \tilde\times \Gr_H$. Moreover, since $L^+H$ acts on $LH$ from the left, $\Gr_H \tilde\times \Gr_H$ becomes an $L^+H$-space and we can contract again to get
    \[ \Gr_H \tilde\times \Gr_H \tilde\times \Gr_H := \Gr_H \tilde\times (\Gr_H \tilde\times \Gr_H). \]
Repeating this, we can construct the $n$-fold twisted product $\Gr_H \tilde\times \cdots \tilde\times \Gr_H$, which has a moduli description
    \[ (\Gr_H \tilde\times \cdots \tilde\times \Gr_H)(R) = \set{ \cE_n \dra ^{\beta_n} \cdots \dra^{\beta_2} \cE_1 \dra^{\beta_1} \cE_0} \]
where $\cE_i$ are $H$-torsors on the disk $\Spec W_\cO(R)$, $\beta_i$ are isomorphisms on the punctured disk $\Spec W_\cO(R)[1/\varpi]$, and $\cE_0$ is the trivial $H$-torsor.

\begin{definition}
\label{def:multiplication_map}
Let
\begin{align*}
    m_j : \Gr_H \tilde{\times} \cdots \tilde{\times} \Gr_H &\to \Gr_H \\
    (\cE_n \dra^{\beta_n} \cdots \dra^{\beta_2} \cE_1 \dra^{\beta_1} \cE_0) &\mapsto (\cE_j \dra^{\beta_1 \circ \cdots \circ \beta_j} \cE_0)
\end{align*}
to be the $j$th multiplication map. We denote $m := m_n$ and refer to it as the \textit{convolution map}.
\end{definition}

Put together, these maps induce an isomorphism between the twisted product and the usual product:
    \[ (m_1,\dots,m_n): \Gr_H \tilde\times \cdots \tilde\times \Gr_H \xra\sim \Gr_H^n. \]

Now let $H = G$. Given $\lambda_\bullet = (\lambda_1,\dots,\lambda_n)$ in $X_*(T)_+ \setminus \set{0}$, the convolution map restricts to a morphism
    \[ m: \Gr_{\leq\lambda_1} \tilde\times \cdots \tilde\times \Gr_{\leq\lambda_n} \to \Gr_{\leq|\lambda_\bullet|} \]
If we restrict the $L^+G$-torsor $LG \to \Gr_G$ to $\varpi^\nu LN$, then the restricted map defines an $L^+N$-torsor $\varpi^\nu LN \to S_\nu$, which fits into the diagram
\[
    \begin{tikzcd}
        \varpi^\nu LN \rar \dar[swap,hook]{L^+N} & LG \dar{L^+G} \\
        S_\nu = \varpi^\nu \Gr_N \rar[hook] & \Gr_G
    \end{tikzcd}
\]
Moreover, since $\varpi^\nu LN = LN \varpi^\nu$ and thus admits a left action of $L^+N$, we can make the following construction.

\begin{definition}
\label{def:Snu-twisted-product}
For $\nu_\bullet = (\nu_1,\dots,\nu_n)$ any tuple in $X_*(T)$ the twisted product
    \[ S_{\nu_\bullet}: = S_{\nu_1} \tilde\times \cdots \tilde\times S_{\nu_m} \]
is given with respect to the bundles $LN \ra \varpi^{\nu_i} \Gr_N$ and the left $L^+N$-spaces $\varpi^{\nu_{i+1}} \Gr_N$.
\end{definition}

If $(\nu_1,\dots,\nu_n)$ is a tuple as in \autoref{def:Snu-twisted-product}, let
    \[ \mu_i := \nu_1 + \cdots + \nu_i \]
for all $i = 1,\dots,n$. The convolution map induces an isomorphism (c.f. \cite[Equation 2.2.2]{Zhu14})
\begin{align*}
\label{eq:splitting_of_S_parts}
    m: S_{\nu_\bullet} &\xra\sim S_{\mu_1} \times \cdots \times S_{\mu_n} \\
    (\varpi^{\nu_1} x_1 , \ldots, \varpi^{\nu_n}x_n) &\mapsto (\varpi^{\mu_1} x_1, \varpi^{\mu_2} (\varpi^{-\nu_2} x_1 \varpi^{\nu_2}) x_2, \ldots)
\end{align*}
Note further that each $\Gr_{\leq\lambda}$ is preserved by the right action of $L^+N$, and therefore the $L^+N$-torsors $\varpi^\nu LN \to S_\nu$ restrict to $L^+N$-torsors $\varpi^\nu LN \times_{S_\nu} \MV_{\lambda,\nu} \to \MV_{\lambda,\nu}$. As such, we can form the twisted product of these spaces as well. So if $\nu_1,\dots,\nu_n$ is a tuple of cocharacters, then we let
    \[ \MV_{\lambda_\bullet, \nu_\bullet} = \MV_{\lambda_1,\nu_1} \tilde \times \cdots \tilde\times \MV_{\lambda_n,\nu_n} \]
and we note that $m$ also induces an isomorphism
    \[ m: \MV_{\lambda_\bullet, \nu_\bullet} \xra\sim S_{\nu_\bullet} \cap \Gr_{\leq\lambda_\bullet}. \]
We summarize all this in a diagram:
\[
    \begin{tikzcd}
        \MV_{\lambda_\bullet,\nu_\bullet} \rar{\sim} & S_{\nu_\bullet} \cap \Gr_{\le \mu _\bullet } \rar[hook] \dar[hook] \ar[dr, phantom, "\lrcorner"] & S_{\nu_\bullet}  \rar["\simeq"] \ar[d, hook] & S_{\sigma_1} \times S_{\sigma_2} \times \cdots \times S_{\sigma_n} \ar[d, hook] \\
        & \dar[swap]{m} \Gr_{\le \mu_\bullet} \rar[hook] & \dar[swap]{m} \Gr_G \tilde\times \cdots \tilde\times \Gr_G \rar & \Gr_G \times \cdots \times \Gr_G \simeq \Gr_G^n \dlar{\pr_n} \\
        & \Gr_{\le |\mu_\bullet|} \rar[hook] & \Gr_G
    \end{tikzcd}
\]

\subsection{Twisted products of sheaves}
\label{sec:twisted-products}

We briefly recall the basic properties of equivariant sheaf theory following \cite{BernsteinLunts} for readers unfamiliar.
Suppose $H$ is an affine group scheme. Suppose $E \to Y$ is an $H$-torsor and $X$ is a left $H$-space. By inverting the right action on $E$, one obtains a left action of $H$ on $E \times X$. By assumption $H$ acts on $E$ freely, so $H$ acts on $E \times X$ freely as well. The quotient by this action is, by definition, the contracted product $Y \tilde\times X$.

Then general descent theory (for example, see \cite{BernsteinLunts}) says that there is an equivalence of categories
    \[ D^b_H(E \times X) \xra\sim D^b(Y \tilde\times X) \]
between $H$-equivariant objects in the bounded derived category of sheaves on $E \times X$, and sheaves on the quotient $Y \tilde\times X$.

Often we will be in the situation where $E$ and $Y$ both admit \textit{left} $H$-actions as well, in such that a way that $E \to Y$ is $H$-equivariant. If $\cF \in D^b_H(Y)$ and $\cG \in D^b_H(X)$, then we can form $\cF \boxtimes \cG \in D^b_H(Y \times X)$, which pulls back to
    \[ p^*(\cF \boxtimes \cG) \in D^b_{H \times H}(E \times X) \]
under the map $p: E \times X \to Y \times X$. Note $p^*(\cF \boxtimes \cG)$ is an $H \times H$-equivariant sheaf; the first copy of $H$ acts on $E$ on the left, and the second copy of $H$ acts as described above. If we descend $p^*(\cF \boxtimes \cG)$ along the quotient map $q: E \times X \to Y \tilde\times X$, we will call the resulting sheaf $\cF \tilde\boxtimes \cG$, and note that it actually lives in $D^b_H(Y \tilde\times X)$, where $H$ acts (from the left) on $Y \tilde\times X$ via the left action on $E$. By construction it satisfies
    \[ p^*(\cF \boxtimes \cG) \simeq q^*(\cF \tilde\boxtimes \cG). \]

Moreover, given a sequence of $H$-torsors $Y_1,\dots,Y_n$ such that each $Y_i$ is also a left $H$-space, we can iterate this construction. If $\cF_i \in D^b_H(Y_i)$, then iterating this construction yields
    \[ \cF_1 \tilde\boxtimes \cdots \tilde\boxtimes \cF_n \in D^b_H(Y_1 \tilde\times \cdots \tilde\times Y_n). \]

\subsection{Auxiliary torsors}
\label{auxiliary_torsors}
First, recall that
    \[ \cA_{\lambda_1} \star \cdots \star \cA_{\lambda_m} = m_!\cA_{\lambda_\bullet}, \]
where the convolution map $m$ is defined in \autoref{def:multiplication_map} and where
    \[ \cA_{\lambda_\bullet} := \cA_{\lambda_1} \tilde\boxtimes \cdots \tilde\boxtimes \cA_{\lambda_m}. \]
There is a finite\footnote{This is because $\set{\nu : \MV_{\lambda,\nu} \neq \varnothing} = \Omega(\lambda)$ is finite.} stratification
    \[ m^{-1}(\MV_{|\lambda_\bullet|,\nu}) = \bigcup_{|\nu_\bullet|=\nu} \MV_{\lambda_\bullet,\nu_\bullet} \]
We will first study the cohomology of each piece $\MV_{\lambda_\bullet,\nu_\bullet}$ individually, then glue back together the resulting cohomologies along the stratification.

In the equal characteristic setting treated in \cite{NP00}, there is a splitting
    \[ \MV_{\lambda_\bullet,\nu_\bullet} := \MV_{\lambda_1,\nu_1} \times \cdots \times \MV_{\lambda_n,\nu_n}. \]
However, it is not clear whether this is true in mixed characteristic (see also \cite[Remark 2.6(ii)]{Zhu14}). Instead we follow (and elaborate on) the approach taken in \cite[Corollary 2.17]{Zhu14}, which involves collapsing the $L^+N$-torsor over $\MV_{\lambda,\nu}$ to an $L^rN$-torsor for sufficiently large $r$ and then showing that the twisted sheaves split after pullback along the $L^rN$-torsors. Moreover, these torsors have good cohomological properties since $L^r N$ is isomorphic to the perfection of $\AA_k^r$.

\begin{definition}
\label{def:torsors_over_MV_cycles}
Let $r \in \NN_{\geq 0} \cup \set{\infty}$. We can form $L^rN$-torsors over $S_\nu$ and $\MV_{\lambda,\nu}$ using the following pullback diagram:
\[ \begin{tikzcd}
    \MV_{\lambda,\nu}^{(r)} \rar \dar{p} & S_\nu^{(r)} := \varpi^\nu LN \times^{L^+N} L^rN \dar \\
    \MV_{\lambda,\nu} \rar[hook] & S_\nu
\end{tikzcd} \]
We adopt the convention $L^{\infty}N := L^+N$. Note that $S_\nu^{(0)} = S_\nu$ and $S_\nu^{(\infty)} = \varpi^\nu LN$.
\end{definition}

We already know there is an action of $L^+N$ on $\MV_{\lambda,\nu}$. The left action of $L^+N$ on $\varpi^\nu LN$ and the trivial action on $L^rN$ induces an action of $L^+N$ on $\varpi^\nu LN \times L^rN$. Since the left and right $L^+N$-actions on $\varpi^\nu LN$ commute, this descends to an action of $L^+N$ on the presheaf quotient of $\varpi^\nu LN \times L^rN$ by the left and right $L^+N$-actions. Sheafification commutes with finite products, so this gives us an action of $L^+N$ on $\varpi^\nu LN \times^{L^+N} L^rN$.

It is moreover clear from the definitions that the maps
    \[ \MV_{\lambda,\nu} \to S_\nu \text{ and } \varpi^\nu LN \times^{L^+N} L^rN \to S_\nu \]
are both $L^+N$-equivariant, so in conclusion we obtain a left action of $L^+N$ on the fiber product $\MV_{\lambda,\nu}^{(r)}$.

\begin{lemma}
\label{lem:action_on_torsor_factors}
For $r \geq 0$, the left action of $L^+N$ on $\MV_{\lambda,\nu}^{(r)}$ factors through $L^{r'}N$ for some $r' > 0$.
\end{lemma}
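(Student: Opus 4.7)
The plan is to reduce the question to checking an equality of morphisms of pfp perfect $k$-schemes on $k$-points via \autoref{lem:morphism_k_points}, then exploit quasi-compactness of $\MV_{\lambda,\nu}^{(r)}$ to bound representatives uniformly. First, $\MV_{\lambda,\nu}^{(r)}$ is itself a pfp perfect scheme, being an $L^rN$-torsor over the pfp perfect scheme $\MV_{\lambda,\nu}$ (\autoref{lem:MV_is_pfp}). Setting $K^{r'} := \ker(L^+N \thra L^{r'}N)$, the action factors through $L^{r'}N$ if and only if the composition
\[
    K^{r'} \times \MV_{\lambda,\nu}^{(r)} \to L^+N \times \MV_{\lambda,\nu}^{(r)} \xra{\act} \MV_{\lambda,\nu}^{(r)}
\]
agrees with projection to the second factor. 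By \autoref{lem:morphism_k_points}, it suffices to find $r'$ such that $K^{r'}(k)$ acts trivially on $\MV_{\lambda,\nu}^{(r)}(k)$.

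Next I unpack this on $k$-points. A class in $\MV_{\lambda,\nu}^{(r)}(k)$ is $[\varpi^\nu n]$ with $\varpi^\nu n \in \varpi^\nu LN(k)$ projecting into $\MV_{\lambda,\nu}(k)$, taken modulo right multiplication by $K^r(k)$. Since $\varpi^\nu$ normalizes $LN$, the action yields $l \cdot [\varpi^\nu n] = [\varpi^\nu (\varpi^{-\nu} l \varpi^\nu) n]$, which equals $[\varpi^\nu n]$ precisely when (i) $l$ acts trivially on the underlying point of $\MV_{\lambda,\nu}(k)$, and (ii) $n^{-1}(\varpi^{-\nu} l \varpi^\nu) n \in K^r(k)$. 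Condition (i) is controlled by \autoref{l:schubert_action_factors}: the $L^+G$-action on $\Gr_{\leq\lambda}$ factors through $L^hG$ for some $h$, so $l \in K^h$ suffices.

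For (ii) I exploit quasi-compactness of $\MV_{\lambda,\nu}^{(r)}$. Using the decomposition $LN = \bigcup_s L^{\geq -s} N$ coming from the product-of-root-subgroups description of $N$, the tautological map $\MV_{\lambda,\nu}^{(r)} \to \varpi^\nu LN/K^r$ factors through a bounded subscheme, yielding a uniform $s = s(\lambda,\nu,r)$ such that every class admits a representative $n$ whose root-subgroup coordinates have valuation $\geq -s$. Conjugation by $\varpi^{-\nu}$ shifts depth in each root subgroup by at most $\max_{\alpha \in \Phi_+}\anbr{\alpha,\nu}$, and subsequent conjugation by such a bounded $n$ reduces depth by a further constant $c(s)$ determined by the commutator structure of $N$. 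Taking $r' > h + r + c(s) + \max_\alpha \anbr{\alpha,\nu}$ ensures (ii) for all $l \in K^{r'}(k)$ and all admissible representatives $n$, completing the proof.

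The main obstacle is extracting the uniform bound $s$: one must show that the scheme map $\MV_{\lambda,\nu}^{(r)} \to \varpi^\nu LN/K^r$ to an ind-scheme factors through a bounded piece, which rests on quasi-compactness of $\MV_{\lambda,\nu}^{(r)}$ together with the explicit ind-scheme structure on $LN$ coming from its root-subgroup decomposition. Once this uniformity is in hand, the remainder is bookkeeping with filtrations on $LN$ by root-subgroup depth.
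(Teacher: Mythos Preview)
Your reduction via \autoref{lem:morphism_k_points} is not valid as stated: that lemma requires both source and target to be pfp perfect $k$-schemes, but $K^{r'} = \ker(L^+N \thra L^{r'}N)$ is an infinite-dimensional pro-(perfect scheme), so $K^{r'} \times \MV_{\lambda,\nu}^{(r)}$ is not pfp. Hence agreement of the two maps on $k$-points does not a priori force agreement scheme-theoretically. One could try to repair this by first establishing, via a separate argument about morphisms from inverse limits of pfp perfect affine schemes to pfp perfect schemes, that the action of $L^+N$ already factors through \emph{some} $L^hN$; only then would it be legitimate to compare the two resulting maps $L^hN \times \MV_{\lambda,\nu}^{(r)} \rightrightarrows \MV_{\lambda,\nu}^{(r)}$ on $k$-points and descend to a smaller $L^{r'}N$. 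But you have not supplied such an argument, and it is essentially the content of the lemma.

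The paper's proof avoids this by working directly with $R$-points for arbitrary perfect $k$-algebras $R$ (after \'etale localization). Its treatment of your condition~(ii) is also structurally different and cleaner: rather than bounding the representative $n$ uniformly via quasi-compactness of $\MV_{\lambda,\nu}^{(r)}$ and then tracking depth loss under conjugation by such $n$ through commutator formulas, the paper exploits membership in $\Gr_{\leq\lambda}$ to write $\varpi^\nu n = x \varpi^{\lambda'} g'$ with $x, g' \in L^+G(R)$ and $\lambda' \leq \lambda$. The element $g = n^{-1}(\varpi^{-\nu} h \varpi^\nu) n$ then rewrites as $\ad((g')^{-1})\ad(\varpi^{-\lambda'})\ad(x^{-1})(h)$; conjugation by $x, g' \in L^+G(R)$ preserves the normal subgroups $L^+G^{(m)}$, while $\ad(\varpi^{-\lambda'})$ drops depth by an amount bounded uniformly over the finitely many $\lambda' \leq \lambda$ (\autoref{lem:conjugating_coharacter_congruence_subgroup}). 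This sidesteps both the need for your uniform bound $s$ on $n$ and the unspecified constant $c(s)$.
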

\begin{proof}
For the first factor, the left action of $L^+G$ on $\Gr_{\leq \lambda}$ factors through $L^{r'}G$ for some $r > 0$, so the left $L^+N$-action on $\MV_{\lambda,\nu}$ factors through $L^{r'}N$ as well.

For the second factor, if $R$ is a perfect $k$-algebra, then after replacing $R$ by an \'etale cover, an $R$-point of $\varpi^\nu LN \times^{L^+N} L^rN$ is the equivalence class of a pair $(\varpi^\nu n, L^+N^{(r)}(R))$ for some $n \in LN(R)$. Fix such a pair, and assume that its image in $S_\nu$ lands in $\MV_{\lambda,\nu}$, since this then defines an element of $\MV_{\lambda,\nu}^{(r)}$. We want to show that there exists some large enough $r'' > r'$ such that if $h \in L^+N^{(r'')}(R)$ then
    \[ (h\varpi^\nu n, L^+N^{(r)}(R)) \sim (\varpi^\nu n, L^+N^{(r)}(R))\]
Up to replacing $R$ by a further \'etale cover, an $R$-point of $\MV_{\lambda,\nu}$ can be written as an $L^+G$-coset in $LG(R)/L^+G(R)$. Since $\varpi^\nu n L^+G(R) \in \MV_{\lambda,\nu}(R)$,  if $h \in L^+N^{(r')}(R)$ then $h$ fixes $\varpi^ \nu n L^+G(R) \in \MV_{\lambda,\nu}(R)$, so  \[ h\varpi^\nu n = \varpi^\nu n g \]  for some $g \in L^+G(R)$. In fact $g \in LN(R)$, since $g = \ad(n^{-1})\ad(\varpi^{-\nu})(h)$, so $g \in L^+N(R) = LN(R) \cap L^+G(R)$. Then
     \[ (h\varpi^\nu n, L^+N^{(r)}(R)) = (\varpi^\nu n g, L^+N^{(r)}(R)) \sim (\varpi^\nu n, gL^+N^{(r)}(R)), \]
so we are done if $g \in L^+N^{(r)}(R)$. Note that it is immediate from the definitions that $L^+N^{(r)} = L^+G^{(r)} \cap L^+N$, so we are done if we can show that $g \in L^+G^{(r)}(R)$.

Since $\varpi^\nu n L^+G(R) \in \Gr_{\leq\lambda}(R)$, there exists some $x,g' \in L^+G(R)$ and some dominant $\lambda' \leq \lambda$ such that $\varpi^\nu n = x\varpi^{\lambda'}g'$.
Thus
    \[ g = \ad((g')^{-1}) \ad(\varpi^{-\lambda'}) \ad(x^{-1})(h) \]
Since $L^+G^{(r'')}(R) \subset L^+G(R)$ is normal, $\ad(x^{-1})(h) \in L^+G^{(r'')}(R)$. By \autoref{lem:conjugating_coharacter_congruence_subgroup} below,
    \[ \ad(\varpi^{-\lambda'})(L^+G^{(r'')}) \subset L^+G^{(r)} \]
for large enough $r''$, which we also choose so that $r'' > r'$. Finally, conjugation by $(g')^{-1}$ sends the normal subgroup $L^+G^{(r)}(R) \subset L^+G(R)$ to itself, so we get that $g \in L^+G^{(r)}(R)$, as desired.
\end{proof}

\begin{lemma}
\label{lem:conjugating_coharacter_congruence_subgroup}
For any cocharacter $\nu$ and any $r \in \NN$, there exists $s \in \NN$ such that
     \[ \ad(\varpi^\nu)(L^+G^{(s)}) \subseteq L^+G^{(r)}. \]
\end{lemma}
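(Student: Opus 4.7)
My approach is to reduce to the case $G = GL(V)$ via a closed embedding, and then do an explicit computation using the weight decomposition of $V$ under $T$.

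Since $G$ is a split reductive group scheme over $\cO$, it admits a faithful representation $\rho: G \hra GL(V)$ defined over $\cO$, which is a closed immersion of $\cO$-group schemes. Because $L^sG \to L^sGL(V)$ remains a closed immersion at every finite level, it follows directly from the definitions that
\[
    L^+G^{(s)} = L^+G \cap L^+GL(V)^{(s)}
\]
as subfunctors of $LG$. Since the conjugation action of $\varpi^\nu$ on $LG$ is the restriction of the conjugation action on $LGL(V)$, it suffices to prove the claim for $GL(V)$ in place of $G$; once we show $\ad(\varpi^\nu)(L^+GL(V)^{(s)}) \subseteq L^+GL(V)^{(r)}$, intersecting with $L^+G$ gives the desired containment.

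For $GL(V)$, I would decompose $V = \bigoplus_{\chi} V_\chi$ into $T$-weight spaces and choose a basis $e_1,\dots,e_n$ of $V$ adapted to this decomposition, with $e_i \in V_{\chi_i}$. For any perfect $k$-algebra $R$ and any $g \in L^+GL(V)^{(s)}(R)$, we may write $g = 1 + h$ where $h \in \varpi^s M_n(W_\cO(R))$, and
\[
    \varpi^\nu g \varpi^{-\nu} - 1 = \varpi^\nu h \varpi^{-\nu} = \sum_{i,j} \varpi^{\anbr{\chi_i - \chi_j,\nu}} h_{ij} E_{ij},
\]
where $E_{ij}$ are the matrix units. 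Since each $h_{ij} \in \varpi^s W_\cO(R)$, each entry of $\varpi^\nu g \varpi^{-\nu} - 1$ lies in $\varpi^{s + \anbr{\chi_i - \chi_j,\nu}} W_\cO(R)$.

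It therefore suffices to take
\[
    s := r + \max_{\chi,\chi' \in \Omega(V)} |\anbr{\chi - \chi',\nu}|,
\]
where $\Omega(V)$ denotes the (finite) set of weights of $V$, to guarantee that $\varpi^\nu g \varpi^{-\nu} \in L^+GL(V)^{(r)}(R)$. I do not anticipate real obstacles here; the only point requiring a small amount of care is the set-theoretic identification $L^+G^{(s)} = L^+G \cap L^+GL(V)^{(s)}$, which is essentially a definition chase, and the verification that the functorial matrix computation above is valid over arbitrary $W_\cO(R)$, which it is since the whole argument is purely formal and happens inside the finitely-generated free $W_\cO(R)$-module $M_n(W_\cO(R))$.
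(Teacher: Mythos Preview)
Your proof is correct but takes a genuinely different route from the paper. The paper argues intrinsically: it invokes the product decomposition
\[
L^+G^{(s)}(R) = L^+T^{(s)}(R) \times \prod_{\alpha \in \Phi} N_\alpha(\varpi^s W_\cO(R)),
\]
observes that $\ad(\varpi^\nu)$ fixes the torus factor and sends $N_\alpha(\varpi^s W_\cO(R))$ to $N_\alpha(\varpi^{s+\anbr{\alpha,\nu}} W_\cO(R))$, and then takes any $s \geq \max_{\alpha \in \Phi}\{r - \anbr{\alpha,\nu}\}$. Your approach is extrinsic: you embed $G$ into $GL(V)$, reduce via the identification $L^+G^{(s)} = L^+G \cap L^+GL(V)^{(s)}$, and do an explicit matrix computation using the weight decomposition of $V$. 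Your bound $s = r + \max_{\chi,\chi'}|\anbr{\chi-\chi',\nu}|$ is essentially the same as the paper's once one notes that the roots of $G$ occur among the differences $\chi - \chi'$ of weights of any faithful representation. The paper's argument is slightly cleaner and stays inside $G$, but it leans on the product decomposition of the congruence subgroup, which is standard but not entirely free; your argument trades that input for the more elementary fact that congruence kernels are compatible with closed embeddings, which is arguably more self-contained.
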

\begin{proof}
For any perfect $k$-algebra $R$, there is a decomposition
    \[ L^+G^{(s)}(R) = L^+T^{(s)}(R) \times \prod_{\alpha \in \Phi} N_\alpha(\varpi^s W_\cO(R)), \]
so
     \[ \ad(\varpi^\nu)(L^+G^{(s)}(R)) = L^+T^{(s)}(R) \times \prod_{\alpha \in \Phi} N_\alpha(\varpi^{s+\anbr{\alpha,\nu}} W_\cO(R)), \]
so take any $s \geq \max_{\alpha \in \Phi}\set{r - \anbr{\alpha,\nu}}$.
\end{proof}

Now pick $\nu_\bullet=(\nu_1,\dots,\nu_n)$ such that $|\nu_\bullet| = \nu_1 + \cdots + \nu_n = \nu$.  By \autoref{lem:action_on_torsor_factors} we can choose integers $r_1,\dots,r_n \geq 0$ such that $r_n = 0$ and such that the action of $L^+N$ on $\prod_{k=i}^n \MV_{\lambda_k,\nu_k}^{(r_k)}$ factors through $L^{r_{i-1}}N$ for $i = 2,\dots,n$.

\begin{example}
\hfill \\
    \begin{tabular}{ccccc}
    $L^+N$ & acts on & $\MV_{\lambda_m,\nu_m}$ & via & $L^+N \thra L^{r_{m-1}}N$, \\
    $L^+N$ & acts on & $\MV_{\lambda_{m-1},\nu_{m-1}}^{(r_{m-1})} \times \MV_{\lambda_m,\nu_m}$ & via & $L^+N \thra L^{r_{m-2}}N$,
\end{tabular}
and so on.
\end{example}

\begin{lemma}\label{lem:splitting_resolution}
There are two $\prod_{i=1}^n L^{r_i} N$-torsors $p_\bullet = \prod_{i=1}^n p_{r_i}$ and $q_\bullet$:
    \[
    \begin{tikzcd}
       &   \prod_{i=1}^n \MV_{\lambda_i, \nu_i}^{(r_i)} \ar[dr, "q_\bullet"]  \ar[dl, "p_\bullet", swap ] \\
     \prod_{i=1}^n \MV_{\lambda_i,\nu_i}  && \MV_{\lambda_\bullet, \nu_\bullet}
    \end{tikzcd}
     \]
such that
    \[ q_\bullet^* \cA_{\lambda_\bullet} \cong p_1^* \cA_{\lambda_1} \boxtimes \cdots \boxtimes p_m^* \cA_{\lambda_n}. \]
\end{lemma}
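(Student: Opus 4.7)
The plan is to realize both $\prod_i \MV_{\lambda_i,\nu_i}^{(r_i)}$ and $\MV_{\lambda_\bullet,\nu_\bullet}$ as quotients of a common iterated $L^+N$-torsor $Y$ in compatible ways, so that $\cA_{\lambda_\bullet}$ and $\boxtimes_i p_i^*\cA_{\lambda_i}$ both arise from the same equivariant sheaf on $Y$ by descent. Let $\wt{\MV}_{\lambda_i,\nu_i} := \varpi^{\nu_i}LN \times_{S_{\nu_i}} \MV_{\lambda_i,\nu_i}$ be the right $L^+N$-torsor over $\MV_{\lambda_i,\nu_i}$, and set $Y := \wt{\MV}_{\lambda_1,\nu_1} \times \cdots \times \wt{\MV}_{\lambda_{n-1},\nu_{n-1}} \times \MV_{\lambda_n,\nu_n}$. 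By definition $\MV_{\lambda_\bullet,\nu_\bullet} = Y/(L^+N)^{n-1}$ under the twisted action, where the $i$-th copy of $L^+N$ acts by right multiplication on $\wt{\MV}_{\lambda_i,\nu_i}$ and by left multiplication on the $(i{+}1)$-th factor.

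To construct $q_\bullet$: verify that after quotienting $Y$ by the simple right action of $\prod_i L^+N^{(r_i)}$ (each copy acting only on its own $\wt{\MV}_{\lambda_i,\nu_i}$) to obtain $\prod_i \MV_{\lambda_i,\nu_i}^{(r_i)}$, the twisted $(L^+N)^{n-1}$-action descends and factors through $\prod_{i=1}^{n-1} L^{r_i}N$. Two ingredients enter: (i) the right component of the $i$-th twisted copy on $\MV_{\lambda_i,\nu_i}^{(r_i)}$ factors through $L^{r_i}N$ by construction of the torsor; and (ii) the left component on $\MV_{\lambda_{i+1},\nu_{i+1}}^{(r_{i+1})}$ factors through $L^{r_i}N$ by the defining hypothesis on the $r_i$'s from \autoref{lem:action_on_torsor_factors}. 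Consequently the subgroup $L^+N^{(r_i)} = \ker(L^+N \twoheadrightarrow L^{r_i}N)$ acts trivially on each component, so the descended action is by $\prod_i L^{r_i}N$ (with $L^{r_n}N$ trivial since $r_n = 0$), free because of the right torsor component. To identify the quotient canonically with $\MV_{\lambda_\bullet,\nu_\bullet}$: an iterative argument starting from the $n$-th factor shows that after quotienting by the simple $L^+N^{(r_i)}$-action on $\wt{\MV}_{\lambda_i,\nu_i}$, the twisted $L^+N^{(r_i)}$-subgroup-action becomes trivial on the partial quotient (its right component on $\MV_{\lambda_i,\nu_i}^{(r_i)}$ has already been quotiented, and its left component on the $(i{+}1)$-th factor trivializes by ingredient (ii)); so the simple and twisted subgroup-quotients of $Y$ both produce the same space $\prod_i \MV_{\lambda_i,\nu_i}^{(r_i)}$, and modding further by the residual $\prod_i L^{r_i}N$ yields $\MV_{\lambda_\bullet,\nu_\bullet}$. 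Take $q_\bullet$ to be this quotient map.

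For the sheaf identification, by the construction of the twisted external product recalled in \autoref{sec:twisted-products}, the pullback of $\cA_{\lambda_\bullet}$ along the $(L^+N)^{n-1}$-torsor $Y \to \MV_{\lambda_\bullet,\nu_\bullet}$ is canonically $\pi_1^*\cA_{\lambda_1} \boxtimes \cdots \boxtimes \pi_{n-1}^*\cA_{\lambda_{n-1}} \boxtimes \cA_{\lambda_n}$ (where $\pi_i\colon \wt{\MV}_{\lambda_i,\nu_i} \to \MV_{\lambda_i,\nu_i}$); the descent relies on the $L^+G$-equivariance of each $\cA_{\lambda_i}$. The pullback of $\boxtimes_i p_i^*\cA_{\lambda_i}$ along the simple quotient $Y \to \prod_i \MV_{\lambda_i,\nu_i}^{(r_i)}$ is manifestly the same sheaf on $Y$. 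Since $Y \to \prod_i \MV_{\lambda_i,\nu_i}^{(r_i)}$ is a torsor for a smooth group scheme (hence faithfully flat), faithfully flat descent produces the canonical isomorphism $q_\bullet^*\cA_{\lambda_\bullet} \cong \boxtimes_i p_i^*\cA_{\lambda_i}$. The hardest step is the iterative identification in the second paragraph, matching the simple and twisted structures on $Y$ stagewise; this is precisely the compatibility that the choice of $r_\bullet$ in \autoref{lem:action_on_torsor_factors} was designed to enable.
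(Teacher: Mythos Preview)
Your proposal is correct and follows essentially the same approach as the paper. Both arguments introduce the common $(L^+N)^{n-1}$-torsor $Y = \wt{\MV}_{\lambda_1,\nu_1} \times \cdots \times \wt{\MV}_{\lambda_{n-1},\nu_{n-1}} \times \MV_{\lambda_n,\nu_n}$ over $\MV_{\lambda_\bullet,\nu_\bullet}$, factor through the intermediate $\prod_i \MV_{\lambda_i,\nu_i}^{(r_i)}$ using the hypothesis on the $r_i$'s, and identify the two pullbacks of the IC sheaves by descent; the only difference is organizational, the paper running the induction one twisted factor at a time (starting with the $m=2$ diagram and replacing the last factor at each step) while you set up all factors at once and then unwind the simple versus twisted quotients iteratively from the top index downward.
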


\begin{proof}
The torsor $p_\bullet$ is just the product of each individual $L^{r_i}N$-torsor $p_{r_i}: \MV_{\lambda_i,\nu_i}^{(r_i)} \to \MV_{\lambda_i,\nu_i}$. If $m = 1$ then both torsors are just the identity map, so suppose $m > 1$. Since the $L^+N$-action on $\MV_{\lambda_m,\nu_m}$ factors through $L^{r_{m-1}}N$, we can form the diagram
\[
\begin{tikzcd}
    & LG \times \Gr_G \ar[rd] \ar[ld] \\
    \Gr_G \times \Gr_G & \MV_{\lambda_{m-1},\nu_{m-1}}^{(\infty)} \times \MV_{\lambda_m,\nu_m} \dar[swap]{p_\infty} \arrow[rdd,"q", bend left=10] \ar[ddl,"p",swap, bend right=10] \ar[u,hook] & \Gr_G \tilde\times \Gr_G \\
    & \MV_{\lambda_{m-1}, \nu_{m-1}}^{(r_{m-1})} \times \MV_{\lambda_m,\nu_m} \arrow[rd,swap,"q_{r_{m-1}}"] \dlar{p_{r_{m-1}} \times \id} \\
    \MV_{\lambda_{m-1},\nu_{m-1}} \times \MV_{\lambda_m,\nu_m} \ar[uu,hook] &&  \MV_{\lambda_{m-1},\nu_{m-1}} \tilde\times \MV_{\lambda_m,\nu_m} \ar[uu,hook]
\end{tikzcd}
\]
in which $q$ is an $L^+N$-torsor and $q_{r_{m-1}}$ is an $L^{r_{m-1}}N$-torsor. The morphism $p_\infty$ is just the change of fibres along the map $L^+N \to L^rN$ in the first slot and the identity in the second. Descent along $q$ gives us a uniquely defined ``external twisted product'' $\cA_{\lambda_{m-1}} \tilde\boxtimes \cA_{\lambda_m}$ on $\MV_{\lambda_{m-1},\nu_{m-1}} \tilde\times \MV_{\lambda_m,\nu_m}$ satisfying
    \[ p^*(\cA_{\lambda_{m-1}} \boxtimes \cA_{\lambda_m}) \cong q^*(\cA_{\lambda_{m-1}} \tilde\boxtimes \cA_{\lambda_m}), \]
noting that $p^*(\cA_{\lambda_{m-1}} \boxtimes \cA_{\lambda_m})$ is $L^+N$-equivariant\footnote{$\cA_{\lambda_{m-1}} \tilde\boxtimes \cA_{\lambda_m}$ is the pullback of the same object defined using the $L^+G$-torsor $LG \times \Gr_G \to \Gr_G \tilde\boxtimes \Gr_G$ along the right vertical arrow in the diagram above.}. Similarly, there is a uniquely defined external twisted product $\cL$ on $\MV_{\lambda_{m-1},\nu_{m-1}} \tilde\times \MV_{\lambda_m,\nu_m}$ satisfying
    \[ p_{r_{m-1}}^*(\cA_{\lambda_{m-1}} \boxtimes \cA_{\lambda_m}) \cong q_{r_{m-1}}^*\cL \]
But pulling back by $p_\infty$ gives $q^*\cL \cong p^*(\cA_{\lambda_{m-1}} \boxtimes \cA_{\lambda_m})$ so we must have $\cL \cong \cA_{\lambda_{m-1}} \tilde\boxtimes \cA_{\lambda_m}$ by uniqueness.

If $m > 2$, one repeats this process inductively. For example, in the next step replace $\MV_{\lambda_m,\nu_m}$ with $\MV_{\lambda_{m-1},\nu_{m-1}}^{(r_{m-1})} \times \MV_{\lambda_m,\nu_m}$, run the same argument, and chase the relevant diagram.
\end{proof}

\begin{lemma}
\label{lem:splitting_h_map_under_resolution}
Fix a sequence $\nu_\bullet = (\nu_1,\dots,\nu_m)$ of cocharacters with $|\nu_\bullet|=\nu$ and let $\mu_i = \nu_1 + \cdots + \nu_i$ for $i = 1,\dots,n$. Note $\mu_n = \nu$. The following diagram commutes:
\[
    \begin{tikzcd}
        \prod_{i=1}^n \MV_{\lambda_i,\nu_i}^{(r_i)} \rar{q_\bullet} \dar[swap]{p_\bullet}
        & \wt\prod_{i=1}^n \MV_{\lambda_i,\nu_i} \dar{m} \\
        \prod_{i=1}^n \MV_{\lambda_i,\nu_i} \dar[swap]{\prod_{i=1}^n h_{\mu_{i-1}}^{\lambda_i,\nu_i}} & \MV_{|\lambda_\bullet|,\nu} \dar{h_0^{|\lambda_\bullet|,\nu}} \\
        \prod_{i=1}^n L\GG_a/L^+\GG_a \rar{+} & L\GG_a/L^+\GG_a
    \end{tikzcd}
\]
As a direct consequence,
\[
    q_\bullet^* m^* (h_0^{|\lambda_\bullet|,\nu})^* \cL_\psi
    \simeq p_1^*(h_0^{\lambda_1,\nu_1})^*\cL_\psi \boxtimes  p_2^*(h_{\mu_1}^{\lambda_2,\nu_2})^*\cL_\psi \boxtimes \cdots \boxtimes p_n^*(h_{\mu_{n-1}}^{\lambda_n,\nu_n})^*  \cL_\psi.
\]
\end{lemma}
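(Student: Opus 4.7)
The plan is to reduce the commutativity of the outer rectangle to an identity inside $LN$ verified on $k$-points, and then derive the sheaf-level consequence formally. Each scheme in the diagram is pfp perfect (each $\MV_{\lambda_i,\nu_i}^{(r_i)}$ is pfp as an $L^{r_i}N$-torsor over a pfp base by \autoref{lem:MV_is_pfp}, and $\MV_{\lambda_\bullet,\nu_\bullet}$, $\MV_{|\lambda_\bullet|,\nu}$ are pfp), while the common target $L\GG_a/L^+\GG_a = \varinjlim_s L^{\geq-s}\GG_a/L^+\GG_a$ is ind-pfp. Any morphism from a pfp scheme to this ind-scheme factors through some $L^{\geq-s}\GG_a/L^+\GG_a$ by quasi-compactness, exactly as in \autoref{lem:action_factoring_through_finite_part}, so \autoref{lem:morphism_k_points} allows us to check equality on $k$-valued points.

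Pick a $k$-point of $\prod_{i=1}^n \MV_{\lambda_i,\nu_i}^{(r_i)}$ whose image under $p_\bullet$ is the tuple $(n_1\varpi^{\nu_1} G(\cO),\ldots,n_n\varpi^{\nu_n} G(\cO))$, each $n_i \in N(F)$. Going along the bottom edge yields
\[
    \sum_{i=1}^n h_{\mu_{i-1}}^{\lambda_i,\nu_i}(n_i \varpi^{\nu_i} G(\cO)) = \sum_{i=1}^n h(\ad(\varpi^{\mu_{i-1}})(n_i))
\]
by \autoref{lemma:definition_of_h_map}. Going along the top edge, $q_\bullet$ collapses the $L^{r_i}N$-torsor data to land in the twisted product $\MV_{\lambda_\bullet,\nu_\bullet}$, and the convolution map $m$ sends the resulting twisted class to the coset $n_1\varpi^{\nu_1}n_2\varpi^{\nu_2}\cdots n_n\varpi^{\nu_n} G(\cO) \in \MV_{|\lambda_\bullet|,\nu}(k)$, which is well-defined modulo $G(\cO)$ and invariant under the twisted-product equivalences. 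Iteratively pushing powers of $\varpi$ rightward via $\varpi^{\nu_j} n = \ad(\varpi^{\nu_j})(n)\varpi^{\nu_j}$ rewrites this as
\[
    \Big(\prod_{i=1}^n \ad(\varpi^{\mu_{i-1}})(n_i)\Big)\varpi^\nu G(\cO),
\]
so applying $h_0^{|\lambda_\bullet|,\nu}$ and using that $h$ is a group homomorphism into the abelian target $L\GG_a/L^+\GG_a$ yields $\sum_{i=1}^n h(\ad(\varpi^{\mu_{i-1}})(n_i))$, matching the bottom edge.

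For the direct consequence, pull back $\cL_\psi$ along the (now equal) composites. By the multiplicativity property of $\cL_\psi$ from \autoref{cor:lustzig_character_sheaves} and the fact that $p_\bullet = \prod_i p_{r_i}$ is a Cartesian product of morphisms, the pullback factors as the promised external tensor product. The only nontrivial bookkeeping is identifying the $k$-points of $m \circ q_\bullet$ with the naive convolution of coset representatives; this holds because $q_\bullet$ is designed precisely to collapse the $L^{r_i}N$-torsors compatibly with the twisted-product structure, so it does not perturb the underlying cosets, and $m$ by construction multiplies representatives in $LG$.
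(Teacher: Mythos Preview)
Your proof is correct and follows essentially the same approach as the paper's: reduce to $k$-points via \autoref{lem:morphism_k_points}, parametrize by representatives in $N(F)\varpi^{\nu_i}$, compute both composites explicitly using the homomorphism property of $h$, and then invoke multiplicativity of $\cL_\psi$ for the sheaf-level consequence. Your version is in fact slightly more careful than the paper's in justifying the reduction to $k$-points (explaining why the target ind-scheme can be replaced by a pfp piece), and your choice to write representatives as $n_i\varpi^{\nu_i}$ rather than the paper's $\varpi^{\nu_i}x_i$ is just a change of coordinates related by $n_i = \ad(\varpi^{\nu_i})(x_i)$.
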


\begin{proof}
By \autoref{lem:morphism_k_points}, commutativity can be checked on the level of $k$-points. By \autoref{def:torsors_over_MV_cycles} and the fact that $L^+N(k) \to L^rN(k)$ is surjective, we can write a general $k$-point of the top left entry in the diagram as
\[
    \smbr{\varpi^{\nu_1}x_1, \ldots, \varpi^{\nu_n }x_n} \in \prod_{i=1} ^n \MV_{\lambda_i, \nu_i}^{(r_i)}(k) \tx{ where } x_i \in N(F) \tx{ for } i=1,\ldots, n.
\]
Passing through $q_\bullet$ and going via the equivalences (c.f. \cite[Section 2.2.1]{Zhu14})
    \[ \wt\prod_{i=1}^n \MV_{\lambda_i,\nu_i} \xra\sim S_{\nu_\bullet} \cap \Gr_{\leq \lambda_\bullet} \hra S_{\nu_\bullet} \xra\sim \prod_{i=1}^n S_{\mu_i} \]
and then applying $m$, we get (see \cite[Equation 2.2.1]{Zhu14} for this formula)
    \[ \ad(\varpi^{\mu_1})x_1 \cdots \ad(\varpi^{\mu_n})x_n \varpi^{\mu_n} \in S_\nu(k). \]
Then applying $h_0^\nu$ we get
\[
    h_0^\nu(\ad(\varpi^{\mu_1}) x_1 \cdots \ad(\varpi^{\mu_n}) x_n\varpi^{\nu}) = \sum _{i=1}^m h_{\mu_i}(x_i) = \sum_{i=1}^m h_{\mu_{i-1}}^{\nu_i} (y_i\varpi^{\nu_i}G(\cO)) = \sum_{i=1}^m (h_{\mu_{i-1}}^{\lambda_i,\nu_i} \circ p_i)(y_i\varpi^{\nu_i}).
\]
where
\[
    y_i= \ad(\varpi^{\nu_i}) x_i \in N(F) \quad i=1,\ldots,n. \qedhere
\]
The direct consequence follows from the commutativity of the diagram and the fact that $\cL_\psi$ is a multiplicative local system, so
    \[ (+)^*\cL_\psi = \cL_\psi \boxtimes \cdots \boxtimes \cL_\psi. \]
\end{proof}

\subsection{Proof of \autoref{p:break_down_of_cohomology}}
\label{section:proof_of_breakdown}

By the projection formula,
\[
    m_! ( \cA_{\lambda_\bullet} \otimes m^*(h_0^{|\lambda_\bullet|,\nu})^* \cL_\psi) \simeq (\cA_{\lambda_1} \star \cdots \star \cA_{\lambda_m}) \otimes (h_0^{|\lambda_\bullet|,\nu})^* \cL_\psi.
\]
Therefore,
\[
    R\Gamma_c(\MV_{|\lambda_\bullet|,\nu}, (\cA_{\lambda_1} \star \cdots \star \cA_{\lambda_m}) \otimes (h_0^{|\lambda_\bullet|,\nu})^* \cL_\psi) \simeq R\Gamma_c\smbr{\bigcup_{|\nu_\bullet|=\nu} \MV_{\lambda_\bullet,\nu_\bullet}, \cA_{\lambda_\bullet} \otimes m^*(h_0^{|\lambda_\bullet|,\nu})^* \cL_\psi}
\]

\begin{lemma}
\[
    R\Gamma_c\smbr{\bigcup_{|\nu_\bullet|=\nu} \MV_{\lambda_\bullet,\nu_\bullet}, \cA_{\lambda_\bullet} \otimes m^*(h_0^{|\lambda_\bullet|,\nu})^* \cL_\psi} \simeq \bigoplus_{|\nu_\bullet|=\nu} R\Gamma_c(\MV_{\lambda_\bullet, \nu_\bullet}, \cA_{\lambda_\bullet} \otimes m^*(h_0^{|\lambda_\bullet|,\nu})^* \cL_\psi).
\]
\end{lemma}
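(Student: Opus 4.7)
The plan is to exploit the given finite stratification $m^{-1}(\MV_{|\lambda_\bullet|,\nu}) = \bigcup_{|\nu_\bullet|=\nu}\MV_{\lambda_\bullet,\nu_\bullet}$ and reduce the splitting assertion to the following key intermediate fact: for every $\nu_\bullet$ with $|\nu_\bullet|=\nu$, the cohomology $R\Gamma_c(\MV_{\lambda_\bullet,\nu_\bullet}, \cF)$, where $\cF := \cA_{\lambda_\bullet} \otimes m^*(h_0^{|\lambda_\bullet|,\nu})^*\cL_\psi$, is concentrated in the single degree $2\anbr{\rho,\nu}$ (independent of $\nu_\bullet$). Once this is established, the distinguished triangles coming from an open/closed filtration split automatically.

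To carry this out, first fix a total order on the finite index set $\set{\nu_\bullet : |\nu_\bullet|=\nu}$ refining the closure partial order on the strata, producing a filtration $\varnothing = Z_0 \subset Z_1 \subset \cdots \subset Z_N = \bigcup_{|\nu_\bullet|=\nu}\MV_{\lambda_\bullet,\nu_\bullet}$ with each $Z_i\setminus Z_{i-1}$ equal to some $\MV_{\lambda_\bullet,\nu_\bullet^{(i)}}$ and open in $Z_i$. The excision triangle associated to this open/closed pair reads
\[
    R\Gamma_c(\MV_{\lambda_\bullet,\nu_\bullet^{(i)}}, \cF) \to R\Gamma_c(Z_i, \cF) \to R\Gamma_c(Z_{i-1}, \cF) \xra{+1}.
\]

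Next, combine \autoref{lem:splitting_resolution} and \autoref{lem:splitting_h_map_under_resolution}: both $\prod_{i=1}^n L^{r_i}N$-torsors $p_\bullet$ and $q_\bullet$ over $\prod_{i=1}^n\MV_{\lambda_i,\nu_i}^{(r_i)}$ identify the pullback of $\cF|_{\MV_{\lambda_\bullet,\nu_\bullet}}$ with the pullback of the external tensor product $\boxtimes_{i=1}^n \cA_{\lambda_i} \otimes (h_{\mu_{i-1}}^{\lambda_i,\nu_i})^*\cL_\psi$. Since $L^{r_i}N$ is the perfection of affine space, pushforward along either torsor multiplies $R\Gamma_c$ by the same invertible complex $R\Gamma_c(\prod L^{r_i}N, \QQellbar)$; cancelling this shift and applying Künneth to the product of the $\MV_{\lambda_i,\nu_i}$'s yields
\[
    R\Gamma_c(\MV_{\lambda_\bullet,\nu_\bullet}, \cF) \simeq \bigotimes_{i=1}^n R\Gamma_c(\MV_{\lambda_i,\nu_i}, \cA_{\lambda_i} \otimes (h_{\mu_{i-1}}^{\lambda_i,\nu_i})^*\cL_\psi).
\]
By \autoref{c:nondominant_vanishing}, this vanishes unless each partial sum $\mu_{i-1}$ is dominant; otherwise, by \autoref{cohomology_on_minuscule_weyl_orbit_intersection}, \autoref{cohomology_on_quasi_minuscule_weyl_orbit_intersection}, and the zero-orbit computation of \autoref{sec:zero_orbit}, each factor is concentrated in degree $2\anbr{\rho,\nu_i}$, so the total tensor product is concentrated in degree $\sum 2\anbr{\rho,\nu_i} = 2\anbr{\rho,\nu}$, the same for every $\nu_\bullet$.

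Finally, since the outer terms of every triangle above are complexes of $\QQellbar$-vector spaces concentrated in the same degree $d = 2\anbr{\rho,\nu}$, the connecting map lives in $\Hom(V[-d], W[-d+1]) = 0$, so every triangle splits as a direct sum, and induction on $i$ produces the claimed decomposition of $R\Gamma_c(\bigcup \MV_{\lambda_\bullet,\nu_\bullet},\cF)$. The principal obstacle is the careful bookkeeping in the second step to ensure the cohomological shifts introduced by the two torsors $p_\bullet$ and $q_\bullet$ match, so that they cancel cleanly; once this is verified, the remaining degeneration argument is purely formal.
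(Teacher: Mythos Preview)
Your proposal is correct and follows essentially the same approach as the paper: the paper invokes \autoref{p:collapse_of_ss} for the stratification and justifies the parity hypothesis by citing \autoref{cohomology:zero_orbit}, \autoref{cohomology_on_minuscule_weyl_orbit_intersection}, and \autoref{cohomology_on_quasi_minuscule_weyl_orbit_intersection}, which implicitly requires exactly the tensor-product decomposition and single-degree computation you spell out. Your version is simply more explicit, replacing the spectral-sequence collapse by the equivalent inductive splitting of excision triangles, and using concentration in the single degree $2\anbr{\rho,\nu}$ rather than the weaker same-parity condition.
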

\begin{proof}
This follows immediately from \autoref{p:collapse_of_ss}, noting that the summands are concentrated in even degree by \autoref{cohomology:zero_orbit}, \autoref{cohomology_on_minuscule_weyl_orbit_intersection}, \autoref{cohomology_on_quasi_minuscule_weyl_orbit_intersection}.
\end{proof}

Next, we need a lemma relating the cohomology of an $\ell$-adic sheaf to its pullback over an affine space fibration.

\begin{lemma}
\label{lem:pullback_compactly_supported_cohomology}
Suppose $X$ and $Y$ are separated pfp perfect $k$-schemes, $r > 0$ is a positive integer, and $f: X \to Y$ is an $(\AA_k^r)^{\pf}$-fibration, i.e. there exists an \'etale cover $\set{U_i \to Y}_{i \in I}$ such that $X \times_Y U_i \to U_i$ is isomorphic to the projection $(\AA_k^r)^{\pf} \times U_i \to U_i$ for all $i \in I$.

Then if $\cF$ is an object in the bounded derived category of $\ell$-adic sheaves on $Y$, there is a canonical isomorphism
    \[ R\Gamma_c(Y, \cF) \xra\sim R\Gamma_c(X, f^*\cF)[2r](r). \]
\end{lemma}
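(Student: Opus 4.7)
The plan is to reduce the statement, via the projection formula, to a local computation of $Rf_!\QQellbar$. Applying the projection formula, one has a natural isomorphism
$$
    Rf_!\smbr{f^*\cF} \simeq \cF \otimes^L Rf_!\QQellbar,
$$
and since $R\Gamma_c(X, f^*\cF) \simeq R\Gamma_c(Y, Rf_! f^*\cF)$, the statement reduces to exhibiting a canonical isomorphism $Rf_!\QQellbar \simeq \QQellbar[-2r](-r)$ in the bounded derived category of $\ell$-adic sheaves on $Y$, which after applying $R\Gamma_c(Y,-)$ will produce the desired shift and Tate twist.

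I would verify this isomorphism étale locally on $Y$. By hypothesis there is an étale cover $\set{U_i \to Y}_{i \in I}$ on which $f$ pulls back to the projection $p_i : \smbr{\AA^r_k}^{\pf} \times U_i \to U_i$. Since the formation of $Rf_!$ commutes with étale base change, and since the étale site is insensitive to perfection (\cite[Appendix A]{Zhu14}), Künneth combined with the standard computation $R\Gamma_c\smbr{\AA^r_k, \QQellbar} \simeq \QQellbar[-2r](-r)$ gives $Rp_{i,!}\QQellbar \simeq \QQellbar_{U_i}[-2r](-r)$ on each $U_i$.

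The remaining step is to glue these local isomorphisms into a global one. The sheaves $R^j f_!\QQellbar$ for $j \neq 2r$ vanish because they do étale locally, so $Rf_!\QQellbar$ is concentrated in degree $2r$. For the top degree, the trace map makes $V \mapsto H^{2r}_c(V, \QQellbar)(r) \simeq \QQellbar$ functorial under morphisms of the perfections of smooth $r$-dimensional $k$-schemes, and this functoriality promotes the local trivializations into a canonical global isomorphism $R^{2r}f_!\QQellbar(r) \xra\sim \QQellbar_Y$. Combining these facts gives $Rf_!\QQellbar \simeq \QQellbar[-2r](-r)$, as required.

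The content of the lemma lies in the local Künneth computation for $(\AA^r_k)^{\pf}$; the main conceptual subtlety is ensuring that the local trivializations glue, which is handled by invoking functoriality of the trace map (equivalently, that an $(\AA^r)^{\pf}$-fibration has canonically trivial ``orientation sheaf''). I do not expect a serious obstacle beyond being careful to cite the correct $\ell$-adic base change and Künneth statements in the pfp perfect setting of \cite[Appendix A]{Zhu14}.
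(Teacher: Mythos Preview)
Your argument is correct, but it is organized differently from the paper's. The paper does not pass through the projection formula or compute $Rf_!\QQellbar$ globally; instead it invokes the $(f_!,f^!)$ adjunction directly and checks that the counit $f_!f^!\cF \to \cF$ is an isomorphism on stalks at geometric points. Since $f$ is perfectly smooth of relative dimension $r$, one has $f^! \simeq f^*[2r](r)$, and the stalkwise check reduces via proper base change to $\pi_!\pi^*\cF_y[2r](r) \simeq \cF_y$ for the structure map $\pi: (\AA^r_\kappa)^{\pf} \to \Spec \kappa$, which is immediate once $\cF_y$ is written as a finite sum of shifts of $\QQellbar$. Your route (projection formula, then \'etale-local K\"unneth, then glue via the trace map) reaches the same destination; the paper's route is slightly shorter because working fiberwise with the counit sidesteps the explicit gluing step you flag as the ``main conceptual subtlety.'' Conversely, your approach has the virtue of isolating the statement $Rf_!\QQellbar \simeq \QQellbar[-2r](-r)$ as a clean intermediate result.
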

\begin{proof}
We thank Pol van Hoften for explaining this to us. By assumption $f$ is a separated perfectly smooth morphism of pfp perfect schemes, equidimensional of relative dimension $r$. In this case we have an adjunction $(f_!,f^!)$, and we check that the counit
    \[ f_!f^! \cF \ra \cF \]
is an isomorphism by checking at the stalk at every geometric point $y: \Spec \kappa \to Y$. Consider the pullback diagram
\[ \begin{tikzcd}
    X_y \rar{g} \dar[swap]{\pi} & X \dar{f} \\
    \Spec \kappa \rar[swap]{y} & Y
\end{tikzcd} \]
Deperfecting this diagram and applying \cite[Proposition 6.7.10]{bhatt2014proetaletopologyschemes} gives us
\begin{align*}
    y^*(f_!f^!\cF) & \simeq \pi_! g^* f^! \cF \\
    & \simeq \pi_! g^* f^* \cF[2r](r) \\
    & \simeq \pi_! \pi^* \cF_y[2r](r)
\end{align*}
Now $\cF_y$ is isomorphic to a finite direct sum of shifts of $\QQellbar$, so since $\pi_!$ and $\pi^*$ are left adjoints (and thus preserve direct sums) it suffices to treat the case $\cF_y \simeq \QQellbar$. The result then follows since $\pi_! \QQellbar = \QQellbar[-2r](-r)$, since $\pi$ is isomorphic to the structure map $(\AA_\kappa^r)^{\pf} \to \Spec \kappa$.

The result follows by taking compactly supported cohomology.
\end{proof}

Now fix some $\nu_\bullet$ such that $|\nu_\bullet| = \nu$. Combining \autoref{lem:splitting_resolution} with \autoref{lem:splitting_h_map_under_resolution}, we see that
\begin{align*}
    q_\bullet^* (\cA_{\lambda_\bullet} \otimes m^*(h_0^{|\lambda_\bullet|,\nu})^* \cL_\psi)
    & \simeq q_\bullet^*\cA_{\lambda_\bullet} \otimes q_\bullet^*m^*(h_0^{|\lambda_\bullet|,\nu})^*\cL_\psi) \\
    & \simeq
    p_1^* (\cA_{\lambda_1}  \otimes h_{\mu_0}^* \cL_\psi) \boxtimes \cdots \boxtimes p_m^* (\cA_{\lambda_m} \otimes h_{\mu_{n-1}}^* \cL_\psi)
\end{align*}
(recall that $\mu_0 = 0$). Note that $q_\bullet^*: \prod_{i=1}^n \MV_{\lambda_i, \nu_i}^{(r_i)} \to \MV_{\lambda_\bullet,\nu_\bullet}$ is  and $\MV_{\lambda_i,\nu_i}^{r_i} \to \MV_{\lambda_i,\nu_i}$ both satisfy the hypotheses of
\autoref{lem:pullback_compactly_supported_cohomology}. Therefore,
\begin{align*}
    & R\Gamma_c(\MV_{\lambda_\bullet, \nu_\bullet}, \cA_{\lambda_\bullet} \otimes m^* (h_0^{|\lambda_\bullet|,\nu})^* \cL_\psi) \\
    & \simeq R\Gamma_c\left( \prod_{i=1}^m \MV_{\lambda_i,\nu_i}^{(r_i)}, q_\bullet^* \smbr{ \cA_{\lambda_\bullet} \otimes m^*(h_0^{|\lambda_\bullet|,\nu})^* \cL_\psi}  \right)\left[2 \dim N \cdot \sum_{i=1}^n r_i\right]\smbr{\dim N \cdot \sum_{i=1}^n r_i} \\
    & \simeq \bigotimes_{i=1}^n \smbr{R\Gamma_c(\MV_{\lambda_i, \nu_i}^{(r_i)}, p_i^*(\cA_{\lambda_i} \otimes (h_{\mu_{i-1}}^{\lambda_i,\nu_i})^* \cL_\psi))[2 \dim N \cdot r_i](\dim N \cdot r_i)} \\
    & \simeq \bigotimes_{i=1}^n R\Gamma_c(\MV_{\lambda_i, \nu_i}, \cA_{\lambda_i} \otimes (h_{\mu_{i-1}}^{\lambda_i, \nu_i})^* \cL_\psi).
\end{align*}

This concludes the proof.

\newpage

\section{Zero orbit}
\label{sec:zero_orbit}

Recall that if $\lambda$ is quasi-minuscule then $\MV_{\lambda,\nu}$ is non-empty if and only if $\nu = w\lambda$ for some $w \in W$, or if $\nu = 0$. In this section we consider the case where $\nu = 0$.

The goal of this section is the following theorem.

\begin{theorem}
\label{cohomology:zero_orbit}
If $\lambda \in X_*(T)_+$ is quasi-minuscule and $\mu \in X_*(T)_+$,
\[
    R\Gamma_c(\MV_{\lambda,0},\cA_\lambda \otimes (h_\mu^{\lambda,0})^*\cL_\psi) = \QQellbar^{|\Delta_{\lambda^\vee}^\mu|}.
\]
\end{theorem}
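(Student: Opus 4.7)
The difficulty in this case is that $\Gr_0 \in \MV_{\lambda,0}$ is a non-smooth point of $\Gr_{\leq\lambda}$, around which $\cA_\lambda$ is not simply a shifted constant sheaf, so the arguments of \autoref{cohomology_on_minuscule_weyl_orbit_intersection} and \autoref{cohomology_on_quasi_minuscule_weyl_orbit_intersection} do not apply directly. My plan is to transfer the problem to a smooth resolution, following the strategy outlined in the introduction. One uses the parahoric resolution $\pi: \wt\Gr_{\leq\lambda} \to \Gr_{\leq\lambda}$ built from $\cP_\lambda$ (\autoref{def:p_lambda}), as constructed by Zhu. The map $\pi$ is an isomorphism over $\Gr_\lambda$, and the exceptional fiber $\pi^{-1}(\Gr_0)$ is isomorphic to $(\bar G/\bar P_\lambda)^{\pf}$. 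By the decomposition theorem applied to this (semi)small resolution, $\cA_\lambda$ is a direct summand of $\pi_* \QQellbar$ (up to shift and twist), so by proper base change and the projection formula we reduce to computing the compactly supported cohomology of $\wt\MV_{\lambda,0} := \pi^{-1}(\MV_{\lambda,0})$ with coefficients in $\pi^*(h_\mu^{\lambda,0})^*\cL_\psi$, up to correction summands of $\pi_*\QQellbar$ supported on $\Gr_0$, which are easy to compute from the topology of $(\bar G/\bar P_\lambda)^{\pf}$.

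Next, stratify $\wt\MV_{\lambda,0}$. The open part $\pi^{-1}(\MV_{\lambda,0} \cap \Gr_\lambda) \cong \MV_{\lambda,0} \cap \Gr_\lambda$ is covered by the pieces $\cL_w \cap S_0$ introduced in \autoref{structure_of_MV_Lambda_wLambda} as $w$ ranges over $W$, and the closed part $\pi^{-1}(\Gr_0)$ decomposes via the Bruhat stratification $(\bar G/\bar P_\lambda)^{\pf} = \bigsqcup_{[w]} (\bar N w\bar P_\lambda/\bar P_\lambda)^{\pf}$. After gluing across the exceptional fiber, \autoref{identification_of_Lw}(1) identifies each glued stratum (for $w\lambda^\vee \in \Phi_-$) with the total space of an $(\AA^1)^{\pf}$-bundle over $(\bar N w \bar P_\lambda/\bar P_\lambda)^{\pf}$, whereas for $w\lambda^\vee \in \Phi_+$ the corresponding stratum is contained in $\Gr_\lambda$ and is handled separately.

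The key computation is the restriction of $\pi^*(h_\mu^{\lambda,0})^*\cL_\psi$ to each stratum. Using the explicit coordinates of \autoref{lem:mv_weyl_reduction_commutes} and \autoref{identification_of_Lw}, combined with the Iwahori decomposition as in the proof of \autoref{cohomology_on_quasi_minuscule_weyl_orbit_intersection}, one shows that on each $(\AA^1)^{\pf}$-fiber the map $h_\mu^{\lambda,0}$ restricts to a linear map into $L^{\geq -s}\GG_a/L^+\GG_a$ whose formula depends only on $\langle w\lambda^\vee,\mu\rangle$. When the image lands in $L^+\GG_a$ (the triviality case), the pullback of $\cL_\psi$ is the constant sheaf and \autoref{lem:pullback_compactly_supported_cohomology} collapses the $(\AA^1)^{\pf}$-fiber, ultimately contributing a single copy of $\QQellbar$; otherwise, the $(L^+N,\cL_\psi)$-equivariance of \autoref{p:finite_action_mv_cycles} and the vanishing criterion of \autoref{p:vanishing_cohomology_equivariant_sheaf} force that stratum's contribution to vanish. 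A dimension count then identifies the surviving strata with $\Delta_{\lambda^\vee}^\mu$, yielding the asserted total $\QQellbar^{|\Delta_{\lambda^\vee}^\mu|}$, and a final check confirms the correction summands from $\pi_*\QQellbar$ other than $\cA_\lambda$ contribute to the same orders and cancel consistently.

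The main obstacle will be the explicit computation of $h_\mu^{\lambda,0}$ along the $(\AA^1)^{\pf}$-fibers appearing in \autoref{identification_of_Lw}(1). Unlike in the analogous computation for $S_{w\lambda}$, where membership in $S_{w\lambda}$ forced the $\AA^1$-coordinate $x$ to vanish, on $S_0$ the parameter $x$ genuinely varies, and one must carefully determine when the adjoint twist by $\varpi^\mu$ pushes the resulting element of $L\GG_a$ back into $L^+\GG_a$. Unwinding this bookkeeping, together with the sign conventions relating $w$, the simple root in $\Delta_{\lambda^\vee}$, and the conductor of $\psi$ fixed in \autoref{character_sheaf}, is the technical heart of the argument, and this is precisely where the condition $\langle \alpha,\mu\rangle \geq 1$ from the definition of $\Delta_{\lambda^\vee}^\mu$ appears.
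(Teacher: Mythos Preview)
Your broad strategy matches the paper's: pass to the resolution $\pi$, apply the decomposition theorem (\autoref{lem:twisted_decomposition_theorem}) to isolate $\cA_\lambda$ as a summand of $\pi_*\QQellbar[d]$, stratify $\pi^{-1}(\MV_{\lambda,0})$ over the Bruhat cells, and compute $h_\mu^{\lambda,0}$ on the $(\AA^1)^{\pf}$-fibers. The paper organizes this as a degree-by-degree match against the correction complex $\cC$ (cases $i>0$, $i=0$, $i<0$ in \autoref{p:cohomology_of_strata_S0capleGrmu}), with the relevant bundle being $\cL_w^-$ of \autoref{def:line_bundle_GP} (the $\PP^1$-fiber minus the section $s$ identified with $\MV_{\lambda,w\lambda}$ by \autoref{lem:identification_of_section}), rather than $\cL_w$ of \autoref{identification_of_Lw}; only $\cL_w^-$ sits entirely inside $\pi^{-1}(\MV_{\lambda,0})$.

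There is, however, a genuine gap in your vanishing mechanism. You invoke the $(L^+N,\cL_\psi)$-equivariance of \autoref{p:finite_action_mv_cycles} together with \autoref{p:vanishing_cohomology_equivariant_sheaf}, but this cannot apply: since $\mu$ is dominant, $\ad(\varpi^\mu)$ preserves $L^+N$, so $h_\mu|_{L^+N}$ lands in $L^+\GG_a$ and $h_\mu^*\cL_\psi$ is the \emph{trivial} local system on $L^hN$; the hypothesis of \autoref{p:vanishing_cohomology_equivariant_sheaf} fails. The paper's vanishing (\autoref{cohomology_line_bundles}) instead proceeds by an explicit fiberwise computation (\autoref{prop:hmu_on_pieces}). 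Using the Steinberg relation $\beta^\vee(s)w_\beta = N_\beta(s)N_{-\beta}(-s^{-1})N_\beta(s)$ one rewrites the generic point $nN_{w\lambda^\vee}(\varpi x)\varpi^{w\lambda}G(\cO)$ on the $\GG_m$-part of $F_b$ as $nN_{-w\lambda^\vee}(\varpi^{-1}x^{-1})G(\cO)$, yielding $h_\mu^{\lambda,0}=\varpi^{\langle -w\lambda^\vee,\mu\rangle-1}x^{-1}$ when $-w\lambda^\vee\in\Delta$ (and zero otherwise). In the critical case $\langle -w\lambda^\vee,\mu\rangle=0$ this makes $f\colon F_b\to L^{\geq-1}\GG_a/L^+\GG_a$ an isomorphism, whence $R\Gamma_c(F_b,f^*\cL_\psi)=R\Gamma_c(L^{\geq-1}\GG_a/L^+\GG_a,\cL_\psi)=0$. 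Note the inversion $x\mapsto x^{-1}$ introduced by the Steinberg relation: the map is \emph{not} linear in the coordinate $x$ coming from \autoref{identification_of_Lw}, and it is exactly this passage from $N_{w\lambda^\vee}$ to $N_{-w\lambda^\vee}$ that produces the simple-root condition defining $\Delta_{\lambda^\vee}^\mu$.
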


The geometry of $\MV_{\lambda,0}$ is more complicated  than that of $\MV_{\lambda,w\lambda}$ for $\lambda \in M$. Following \cite{NP00} and \cite{Zhu14} we use a resolution of singularities $\wt{\Gr}_{\le \lambda} \ra \Gr_{\le \lambda}$ in \autoref{resolution_singularity} to understand it. We will apply the decomposition theorem to this resolution to obtain \autoref{eq:decomposition_theorem} which reduces us to the cohomology computation in \autoref{p:cohomology_of_strata_S0capleGrmu}.

\subsection{Resolution of singularity}
\label{resolution_singularity}
We will use the resolution of $\MV_{\lambda,0}$ induced from the resolution
    \[ \pi: \wt{\Gr}_{\le\lambda} \to \Gr_{\le \lambda} \]
as defined in \cite[Lemma 2.12]{Zhu14}, which is similar in spirit to \cite[Ch. 9.1]{Billey2000SingularLO}. We briefly recall it here:

\begin{definition}
\label{def:parahoric_schemes}
Given $r \in [0,1]$, consider the parahoric subgroup
\[
   \cG_r(\cO) = \anbr{T(\cO), \varpi^{\lceil \anbr{r\lambda,\alpha} \rceil}N_\alpha(\cO) : \alpha \in \Phi} \subset G(F).
\]
This determines a parahoric $\cO$-group scheme $\cG_r$. Let $Q_r := L^+\cG_r \in \Ind\Sch_k$, as described in \autoref{def:standard_constructions}; this is representable by an affine group scheme.
\end{definition}

\begin{remark}
Note that $\cG_0 = G$ and $\cG_{1/4} = \cP_\lambda$ as defined above in \autoref{dominant_case_weyl_orbit}.
\end{remark}

\begin{example}
Suppose $G = \GL_2$. Then $\lambda = (1,-1)$. Then
\[
Q_0 = \begin{pmatrix}
    \cO & \cO \\
    \cO &\cO
\end{pmatrix}, \quad Q_{1/4}= \begin{pmatrix}
    \cO &  \varpi \cO \\
    \cO & \cO
\end{pmatrix}
\]
\[
Q_{1/2} =
\begin{pmatrix}
    \cO & \varpi \cO \\
    \varpi^{-1} \cO & \cO
\end{pmatrix} \quad  Q_{\frac{3}{4}} = \begin{pmatrix}
    \cO &  \varpi^2 \cO \\
    \varpi^{-1} \cO & \cO
\end{pmatrix}
Q_1 =
\begin{pmatrix}
    \cO & \varpi^2 \cO \\
    \varpi^{-2} \cO & \cO
\end{pmatrix}
\]
This is pictured via the following diagram
\[
\begin{tikzcd}[row sep=1ex, column sep=2ex]
    (G/P_\lambda)^{\pf} & & \smbr{\PP^1}^{\pf }\\
    &  Q_0  \ar[ul ] & & Q_{\frac{1}{2}} \ar[ul]  & & Q_{1} \\
    &  & Q_{\frac{1}{4}} \ar[ul] \ar[ur] & & Q_{\frac{3}{4}} \ar[ul] \ar[ur]
\end{tikzcd}
\]
where the top left diagonal sequences are quotients and the rest are inclusions.
\end{example}

The resolution of singularities is then
\begin{align*}
    \pi: \wt\Gr_{\leq\lambda} := Q_0 \times^{Q_{\frac14}} Q_{\frac12}/Q_{\frac34} &\to \Gr_{\leq \lambda} \\
    (g,g') &\mapsto gg'\varpi^\lambda.
\end{align*}
There is a natural map
\[
    \phi: \wt\Gr_{\leq\lambda} = Q_0 \times^{Q_{\frac14}} Q_{\frac12}/Q_{\frac34} \to Q_0 \times^{Q_{\frac14}} Q_{3/4}/Q_{3/4} = Q_0/Q_{\frac14} = (\bar G/\bar P_\lambda)^{\pf}.
\]
Since $Q_{\frac12}/Q_{\frac34} \simeq (\PP^1)^{\pf}$, the map $\pi$ is a $(\PP^1)^{\pf}$-bundle.

As stated in \cite{Zhu14}, this gives rise to a decomposition
\[
\begin{tikzcd}
    Q_0 \times^{Q_{\frac14}} Q_{\frac14}Q_{\frac34}/Q_{\frac34} \rar[equals] \dar[hook] & \pi^{-1}(\Gr_\lambda)\rar["\simeq"] \dar[hook] & \Gr_\lambda  \dar[hook] \\
    Q_0 \times^{Q_{\frac14}} Q_{\frac12}/Q_{\frac34} \rar[equals] & \wt\Gr_{\leq \lambda} \rar["\pi", swap] & \Gr_{\leq \lambda} \\
    Q_0 \times^{Q_{\frac14}} Q_{\frac14}s_{1,\mu^\vee}Q_{\frac34}/Q_{\frac34} \rar[equals] \uar[hook] & (\bar G/\bar P_\lambda)^{\pf} \uar[hook] \rar & \Gr_0 = * \uar[hook]
\end{tikzcd}
\]

We have the following identification.
\begin{proposition}
\label{p:lemma_for_section} The map
    \[
        \mathring\phi: \Gr_\lambda \xra{\pi^{-1}} \pi^{-1}(\Gr_\lambda) = Q_0 \times^{Q_{\frac14}} Q_{\frac14}Q_{\frac34}/Q_{\frac34} \xra{\phi} Q_0 \times^{Q_{\frac14}} * \xra\sim (\bar G/\bar P_\lambda)^{\pf}
    \]
    coincides with the reduction map. Thus, we have the following pullback,
\[
\begin{tikzcd}
    \cL_w  \rar \ar[d]  \ar[dr, phantom, "\lrcorner"] & Q_0 \times^{Q_{\frac{1}{4}}} Q_{\frac14}Q_{\frac34}/Q_{\frac34} \dar{\mathring\phi} \\
    (\bar Nw\bar P_\lambda /\bar P_\lambda)^{\pf} \rar & Q_0 \times ^{Q_{\frac14}} * \simeq \smbr{\bar{G}/\bar{P}_\lambda }^{\pf }
\end{tikzcd}
\]
where $\cL_w$ is as defined in  \autoref{structure_of_MV_Lambda_wLambda}.
\end{proposition}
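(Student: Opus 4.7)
The content of the proposition consists of two closely related assertions: (i) the composite $\phi\circ\pi^{-1}\colon \Gr_\lambda \to (\bar G/\bar P_\lambda)^{\pf}$ agrees with the reduction map from Section~\ref{structure_of_MV_Lambda_wLambda}, and (ii) the resulting pullback square defining $\cL_w$. Once (i) is proved, (ii) is formal from the very definition $\cL_w := \mathring\phi^{-1}((\bar N w\bar P_\lambda/\bar P_\lambda)^{\pf})$, so the whole argument reduces to checking (i). Since $\Gr_\lambda$, $\pi^{-1}(\Gr_\lambda)$ and $(\bar G/\bar P_\lambda)^{\pf}$ are all pfp perfect $k$-schemes, \autoref{lem:morphism_k_points} lets me verify (i) at the level of $k$-points.

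\textbf{Step 1: explicit inverse of $\pi$ on $\Gr_\lambda$.} As noted just before the proposition, $\pi$ restricts to an isomorphism $\pi^{-1}(\Gr_\lambda) \xra{\sim} \Gr_\lambda$ via the identification $\pi^{-1}(\Gr_\lambda) = Q_0 \times^{Q_{1/4}} Q_{1/4}Q_{3/4}/Q_{3/4}$. For a $k$-point $g\varpi^\lambda G(\cO) \in \Gr_\lambda(k)$ with $g \in Q_0(k) = G(\cO)$, the pair $(g,\,e\cdot Q_{3/4}(k))$ (with $e$ the identity, which clearly lies in $Q_{1/4}Q_{3/4}$) is a preimage under $\pi$, since
\[
\pi(g,\,e\cdot Q_{3/4}(k)) \;=\; g\cdot e\cdot \varpi^\lambda G(\cO) \;=\; g\varpi^\lambda G(\cO).
\]
By the isomorphism $\pi|_{\pi^{-1}(\Gr_\lambda)}$, this preimage is unique.

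\textbf{Step 2: comparison with reduction.} Applying $\phi$ to this preimage gives $g\cdot Q_{1/4}(k) \in (Q_0/Q_{1/4})(k)$. By the definition of $Q_{1/4}$ (cf.\ \autoref{def:parahoric_schemes}), reduction mod $\varpi$ sends $Q_0(k) = G(\cO)$ surjectively onto $\bar G(k)$ and sends $Q_{1/4}(k)$ into $\bar P_\lambda(k)$; a root-by-root check using $\anbr{\alpha,\lambda}\in\{0,\pm 1,\pm 2\}$ for $\lambda$ quasi-minuscule shows that it induces the asserted isomorphism $Q_0/Q_{1/4} \xra{\sim} (\bar G/\bar P_\lambda)^{\pf}$. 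Under this identification, $g\cdot Q_{1/4}(k) \mapsto \bar g\bar P_\lambda$, which is exactly the value of the reduction map $\mathring\phi$ on $g\varpi^\lambda G(\cO)$. The two morphisms agree on all $k$-points, hence coincide by \autoref{lem:morphism_k_points}.

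\textbf{Step 3: the pullback square.} Using Step~1 to identify $\pi^{-1}(\Gr_\lambda)$ with $\Gr_\lambda$, the map in the right column of the diagram is identified with the reduction map $\mathring\phi\colon \Gr_\lambda \to (\bar G/\bar P_\lambda)^{\pf}$. By definition in \autoref{structure_of_MV_Lambda_wLambda},
\[
\cL_w \;=\; \mathring\phi^{-1}\bigl((\bar N w\bar P_\lambda/\bar P_\lambda)^{\pf}\bigr),
\]
which is exactly the fibered product displayed in the proposition.

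\textbf{Main obstacle.} There is essentially no obstacle once one trusts the identification of $\pi^{-1}(\Gr_\lambda)$ and the fact that $\pi$ restricts to an isomorphism there (both coming from \cite{Zhu14}). The only mildly technical point is to verify that the natural map $Q_0/Q_{1/4} \to (\bar G/\bar P_\lambda)^{\pf}$ really is reduction mod $\varpi$; this is a short computation with the parahoric root data for quasi-minuscule $\lambda$, entirely parallel to the arguments already used in \autoref{cohomology_on_quasi_minuscule_weyl_orbit_intersection}.
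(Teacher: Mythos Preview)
Your proof is correct. The paper states this proposition without proof, treating it as an immediate consequence of the definitions (the identification $Q_0/Q_{1/4}=(\bar G/\bar P_\lambda)^{\pf}$ is already asserted in the line defining $\phi$, and $\cL_w$ was defined in \autoref{structure_of_MV_Lambda_wLambda} precisely as the preimage of $(\bar Nw\bar P_\lambda/\bar P_\lambda)^{\pf}$ under the reduction map). Your argument via \autoref{lem:morphism_k_points}, writing down the explicit $k$-point preimage $(g,e\cdot Q_{3/4})$ and tracing it through $\phi$, is exactly the check one would make to unwind those definitions, so there is no meaningful difference in approach.

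One small remark: the isomorphism $Q_0/Q_{1/4}\simeq(\bar G/\bar P_\lambda)^{\pf}$ does not require $\lambda$ to be quasi-minuscule; it holds for any dominant $\lambda$ since $Q_{1/4}=L^+\cP_\lambda$ is by construction the preimage of $\bar P_\lambda$ under reduction mod $\varpi$. Your root-by-root comment in Step~2 is therefore unnecessary, though harmless.
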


After restriction to $S_0$ (which contains $\Gr_0$) we obtain
\begin{equation}
\label{equation:diagram_resolution_restrictted_semi_infinite_orbit }
\begin{tikzcd}
    \pi^{-1}(S_0 \cap \Gr_\lambda) \rar["\simeq"] \dar[d] &  \Gr_\lambda \cap S_0 \dar[d] \\
    \pi^{-1}(\MV_{\lambda,0}) \rar["\pi", swap] & \MV_{\lambda,0} \\
    (\bar G/\bar P_\lambda)^{\pf} \uar[hook]  \rar & \Gr_0  \uar[hook]
\end{tikzcd}
\end{equation}
Let
    \[ (\bar G/\bar P_\lambda)_-^{\pf} := \bigcup_{\substack{w \in W/\Stab_W(\lambda^\vee) \\ w\lambda^\vee \in \Phi_-}} (\bar N w \bar P_\lambda/\bar P_\lambda)^{\pf}
    \quad
    (\bar G/\bar P_\lambda)_+^{\pf} := \bigcup_{\substack{w \in W/\Stab_W(\lambda^\vee) \\ w\lambda^\vee \in \Phi_+}} (\bar N w \bar P_\lambda/\bar P_\lambda)^{\pf} \]

\begin{remark}
There is an extremely useful picture to have in mind to understand the geometry of $\wt\Gr_{\leq\lambda}$. The diagram below summarizes the situation.
\[
\begin{tikzcd}
    \MV_{\lambda,0} \rar[hook] & \Gr_{\leq\lambda} & \Gr_\lambda \lar[hook] \dar[equals] \\
    \pi^{-1}(\MV_{\lambda,0}) \uar[hook]{\pi} \drar[swap]{\phi} \rar[hook] & \wt\Gr_{\leq\lambda} \dar[swap]{\phi} \uar[hook]{\pi} & \pi^{-1}(\Gr_\lambda) \dlar{\mathring\phi} \lar[hook] \\
    & (\bar G /\bar P_\lambda)^{\pf}
\end{tikzcd}
\]
Here is a picture of this diagram for $G = \GL_2$.
\begin{center}
\begin{figure}[H]
    \centering
\includegraphics[scale=0.75]{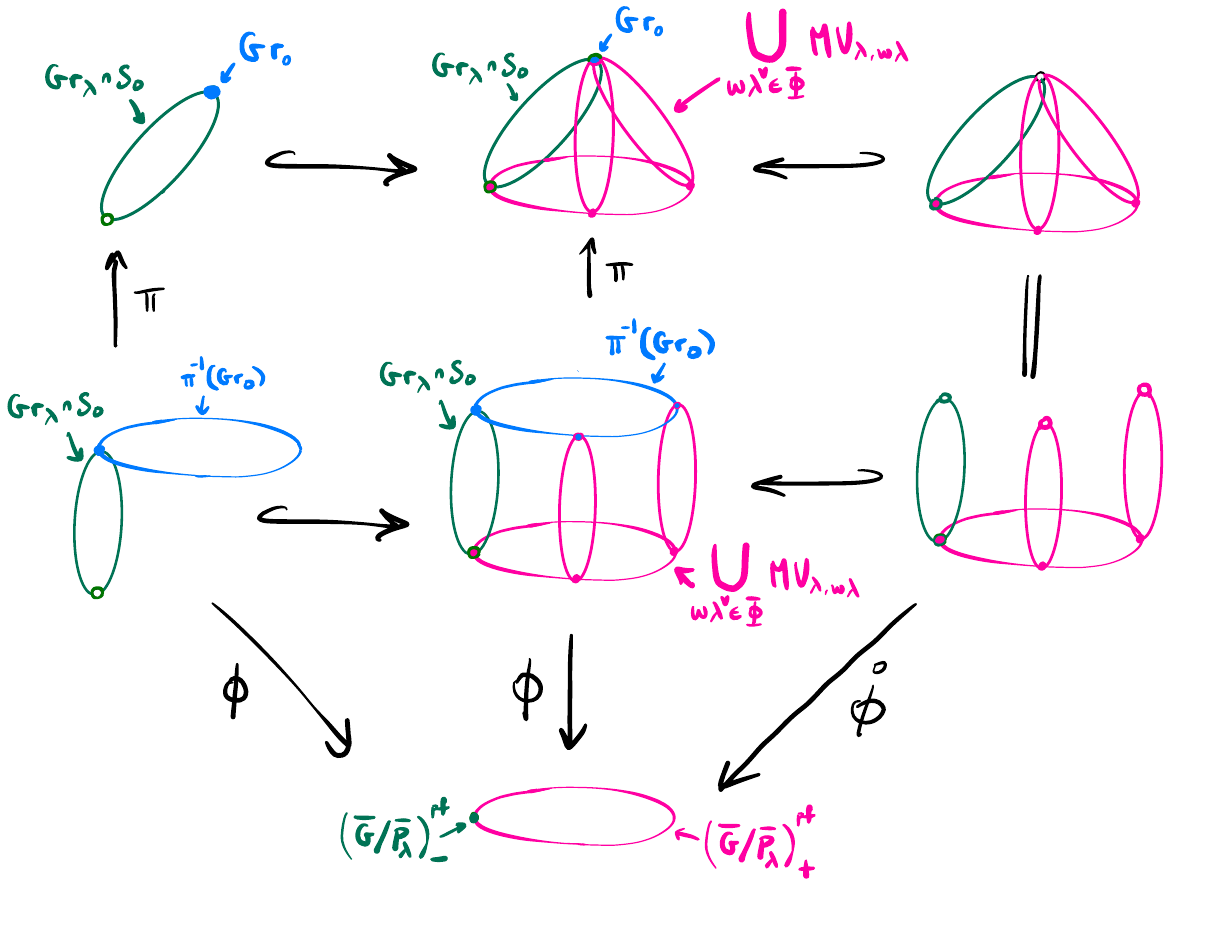}
    \caption{For $G = \GL_2$, we note that $\bar G / \bar P_\lambda \cong \PP^1$, hence we draw the base and fibers similarly. The fact that $(\bar G / \bar P_\lambda)_-^{\pf}$ is drawn as a point is also an artefact of the $\GL_2$-case, and would be a positive dimensional stratum in general.}
\end{figure}
\end{center}
\end{remark}

\begin{lem}
    $\phi(\pi^{-1}(S_0 \cap \Gr_\lambda)) \subset (\bar G/\bar P_\lambda)_-^{\pf}$.
\end{lem}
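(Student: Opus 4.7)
The plan is to reduce the claim to a statement about the reduction map $\mathring\phi$, and then combine two stratifications of $\Gr_\lambda$: the semi-infinite orbit stratification and the Schubert-cell stratification pulled back from $\bar G/\bar P_\lambda$.

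First, I would observe that since $S_0 \cap \Gr_\lambda \subset \Gr_\lambda$, the set $\pi^{-1}(S_0 \cap \Gr_\lambda)$ lives inside $\pi^{-1}(\Gr_\lambda)$, on which $\pi$ restricts to an isomorphism and the composite $\phi \circ \pi^{-1}$ equals $\mathring\phi$ by \autoref{p:lemma_for_section}. Hence
\[
    \phi(\pi^{-1}(S_0 \cap \Gr_\lambda)) = \mathring\phi(S_0 \cap \Gr_\lambda),
\]
so it suffices to prove $\mathring\phi(S_0 \cap \Gr_\lambda) \subset (\bar G/\bar P_\lambda)_-^{\pf}$.

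Next I would exploit two different partitions of $\Gr_\lambda$. On the one hand, since $\lambda$ is quasi-minuscule we have $\Omega(\lambda) = W\cdot\lambda \cup \set{0}$, and because $\Gr_0 = *$ is disjoint from $\Gr_\lambda$, the semi-infinite orbit decomposition reads
\[
    \Gr_\lambda = \smbr{\bigsqcup_{[w] \in W/\Stab_W(\lambda)} \MV_{\lambda,w\lambda}} \sqcup (S_0 \cap \Gr_\lambda),
\]
so $S_0 \cap \Gr_\lambda$ is exactly the complement of all the $\MV_{\lambda,w\lambda}$. On the other hand, the Bruhat stratification of $\bar G/\bar P_\lambda$ pulls back to $\Gr_\lambda = \bigsqcup_{[w]} \cL_w$, where $\cL_w = \mathring\phi^{-1}((\bar N w \bar P_\lambda/\bar P_\lambda)^{\pf})$.

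The key input is \autoref{identification_of_Lw}, which tells us that $\cL_w = \MV_{\lambda,w\lambda}$ precisely when $w\lambda^\vee \in \Phi_+$, and that $\cL_w$ strictly contains $\MV_{\lambda,w\lambda}$ (as the zero section of an $(\AA^1)^{\pf}$-bundle) when $w\lambda^\vee \in \Phi_-$. In the first case $\cL_w \subset S_{w\lambda}$, which is disjoint from $S_0$ (as $w\lambda \neq 0$), hence $\cL_w \cap (S_0 \cap \Gr_\lambda) = \varnothing$. Combining this with the complement description above forces
\[
    S_0 \cap \Gr_\lambda \subset \bigsqcup_{w\lambda^\vee \in \Phi_-} \cL_w,
\]
and applying $\mathring\phi$ yields the desired inclusion into $(\bar G/\bar P_\lambda)_-^{\pf}$.

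No step here looks serious; everything rests on \autoref{p:lemma_for_section} and the dichotomy in \autoref{identification_of_Lw}. The only mild subtlety is bookkeeping the two indexing sets $W/\Stab_W(\lambda)$ and $W/\Stab_W(\lambda^\vee)$, but for quasi-minuscule $\lambda$ these agree, so the two stratifications of $\Gr_\lambda$ really are indexed by the same set and the contrapositive argument above goes through without friction.
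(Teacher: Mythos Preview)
Your proof is correct and follows essentially the same approach as the paper's: both arguments reduce to $\mathring\phi$ on $\Gr_\lambda$ and use that, for $w\lambda^\vee \in \Phi_+$, the entire fiber $\cL_w = \mathring\phi^{-1}((\bar N w \bar P_\lambda/\bar P_\lambda)^{\pf})$ coincides with $\MV_{\lambda,w\lambda}$ (the paper cites \autoref{lem:quasi_minuscule_semiinfinite_orbit_bundle}, which is precisely what proves part (2) of your cited \autoref{identification_of_Lw}), whence $S_0 \cap \Gr_\lambda$ cannot meet $\cL_w$ for such $w$. Your write-up is in fact more explicit than the paper's, which leaves the reduction to $\mathring\phi$ and the complement argument implicit.
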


\begin{proof}
We have that
\[
    \pi^{-1}(\Gr_\lambda) = \bigsqcup_{w \in W} \pi^{-1}(\MV_{\lambda, w\lambda}) \cup \pi^{-1}(\MV_{\lambda ,0}) \simeq  \bigsqcup_{w \in W} \MV_{\lambda ,w \lambda} \cup \MV_{\lambda, 0}
\]
If $w \lambda^\vee \in \Phi_+$, we have $\MV_{\lambda, w\lambda} \simeq \phi^{-1}\smbr{\bar N w \bar P/ \bar P_\lambda}$, by \autoref{lem:quasi_minuscule_semiinfinite_orbit_bundle}, so the result follows.
\end{proof}

Following \cite{NP00}, we consider two $(\AA^1)^{\pf}$-bundles contained in $\phi^{-1}(\bar G/\bar P_\lambda)_-^{\pf}$. The first is
\[
    \cL := \bigcup_{w\lambda^\vee \in \Phi_-} \cL_w,
\]
which we already considered in \autoref{structure_of_MV_Lambda_wLambda}, and the second is $\cL^-$, see \autoref{def:line_bundle_GP}, where $\cL \cup \cL^- = \phi^{-1}(\bar G / \bar P_\lambda)_-^{\pf}$. The second bundle is necessary to compute the values of $h_\mu^{\lambda,0}$, as we do below.

The map $\phi: \wt\Gr_{\leq\lambda} \to (\bar G / \bar P_\lambda)^{\pf}$ admits a section, given by
\[
    s: (\bar G / \bar P_\lambda)^{\pf} \simeq Q_0 \times^{Q_{\frac14}} Q_{\frac34}/Q_{\frac34} \to Q_0 \times^{Q_{\frac14}} Q_{\frac12}/Q_{\frac34} = \wt\Gr_{\leq\lambda}.
\]

\begin{definition}
\label{def:line_bundle_GP}
Let
\[
    \cL^- := \phi^{-1}((\bar G/\bar P_\lambda)_-^{\pf}) \setminus s((\bar G / \bar P_\lambda)_-^{\pf})
\]
Similarly for $w \in W$ such that $w\lambda^\vee \in \Phi_-$ let
\[
    \cL^{-}_w:= \cL^-|_{(\bar{N}w\bar P_\lambda/\bar P_\lambda)^{\pf}}
\]
\end{definition}

\begin{lemma}
\label{lem:identification_of_section}
    \[ s((\bar G / \bar P_\lambda)_-^{\pf}) = \pi^{-1}\smbr{\bigcup_{w\lambda^\vee \in \Phi_-} \MV_{\lambda,w\lambda}} \]
\end{lemma}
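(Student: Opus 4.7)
The plan is to show that both sides of the equality are sections of the map $\phi$ when restricted to $(\bar G/\bar P_\lambda)_-^{\pf}$, and then check that the two sections agree on $k$-points.

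First I would observe that the right-hand side lies in the locus where $\pi$ is an isomorphism: by \autoref{lem:weyl_orbit_intersection}, each $\MV_{\lambda,w\lambda}$ is contained in $\Gr_\lambda$, and $\pi$ restricts to an isomorphism $\pi^{-1}(\Gr_\lambda) \xra{\sim} \Gr_\lambda$ (as recorded in the diagram in \autoref{resolution_singularity}). Combining \autoref{p:lemma_for_section}, which identifies $\phi|_{\pi^{-1}(\Gr_\lambda)}$ with the reduction map $\mathring\phi$, and \autoref{lem:quasi_minuscule_semiinfinite_orbit_bundle}(1), which says $\mathring\phi$ restricts to an isomorphism $\MV_{\lambda,w\lambda} \xra{\sim} (\bar N w\bar P_\lambda/\bar P_\lambda)^{\pf}$ whenever $w\lambda^\vee \in \Phi_-$, I conclude that $\phi$ carries the right-hand side isomorphically onto $(\bar G/\bar P_\lambda)_-^{\pf}$. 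The left-hand side is tautologically a section of $\phi$, so both expressions define sections of $\phi$ over $(\bar G/\bar P_\lambda)_-^{\pf}$.

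By \autoref{lem:morphism_k_points}, it then suffices to check the two sections agree on $k$-points. Given $\bar g \bar P_\lambda \in (\bar N w\bar P_\lambda/\bar P_\lambda)^{\pf}$ with $w\lambda^\vee \in \Phi_-$, I pick a lift $g = nw \in L^+G$ with $n \in N(\cO)$. Unwinding the definition of $s$,
\[
    \pi(s(\bar g \bar P_\lambda)) = g\varpi^\lambda L^+G = nw\varpi^\lambda L^+G = n\varpi^{w\lambda}L^+G,
\]
and by \autoref{lem:weyl_orbit_intersection} this lies in $L^+N \varpi^{w\lambda} L^+G/L^+G = \MV_{\lambda,w\lambda}$. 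Hence $s(\bar g\bar P_\lambda) \in \pi^{-1}(\MV_{\lambda,w\lambda})$ and has the correct $\phi$-image, so the two sections coincide.

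The main obstacle is simply to marshal the various identifications between $\pi$, $\phi$, and $\mathring\phi$ and confirm that $\phi$ induces the claimed isomorphism on the right-hand side; once that is in place, the pointwise comparison reduces to an essentially formal computation with coset representatives.
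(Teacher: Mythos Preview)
Your proposal is correct and is precisely a detailed execution of what the paper's one-line proof (``trace through the identifications in \autoref{structure_of_MV_Lambda_wLambda}'') is pointing to: you invoke \autoref{lem:weyl_orbit_intersection}, \autoref{lem:quasi_minuscule_semiinfinite_orbit_bundle}(1), and \autoref{p:lemma_for_section} to recognize both sides as sections of $\phi$ over $(\bar G/\bar P_\lambda)_-^{\pf}$, and then compare them pointwise via \autoref{lem:morphism_k_points}. The only caveat is that your formula $\pi(s(\bar g\bar P_\lambda)) = g\varpi^\lambda L^+G$ leans on a particular reading of the definition of $s$; this is the intended one, but you should be aware that the well-definedness of $s$ as written (via $Q_{3/4}/Q_{3/4} \hookrightarrow Q_{1/2}/Q_{3/4}$) is slightly delicate, and your computation is really using the concrete trivialization of $\cL_w$ from \autoref{identification_of_Lw} to pin down the zero section.
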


\begin{proof}
This follows from tracing through the identifications in \autoref{structure_of_MV_Lambda_wLambda}.
\end{proof}

\begin{lem}\label{lem:twisted_decomposition_theorem}
Let $d = \anbr{2\rho,\lambda}$. With the notation as above,
\[
    \pi_*\pi^* (h_\mu^{\lambda,0})^*\cL_\psi[d] \simeq (\cA_\mu \otimes (h_\lambda^{\lambda,0})^* \cL_\psi) \oplus \cC
\]
where $\cC$ is a complex of $\QQellbar$-vector spaces supported on $\Gr_0$ satisfying
\[
H^i(\cC) =
\begin{cases}
    H^{i+d}(\bar G/\bar P_\lambda,\QQellbar) & i \ge 0 \\
    H^{i+d-2}(\bar G/\bar P_\lambda,\QQellbar) & i <0
\end{cases}
\]
\end{lem}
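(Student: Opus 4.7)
The plan is to combine the Beilinson--Bernstein--Deligne decomposition theorem applied to $\pi$ with the projection formula, and then identify the skyscraper summand using stalk computations and Verdier duality. (I will read the right-hand side of the statement as $\cA_\lambda \otimes (h_\mu^{\lambda,0})^*\cL_\psi$, taking the indices $\mu,\lambda$ on the right-hand side as typos.)

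First, since $\wt\Gr_{\leq\lambda}$ is smooth of dimension $d$ and $\pi$ is proper, the decomposition theorem expresses $\pi_*\QQellbar[d]$ as a semisimple direct sum of shifts of IC sheaves on strata of $\Gr_{\leq\lambda}$. The only strata are $\Gr_\lambda$ and $\Gr_0$ (because $\lambda$ is quasi-minuscule), so the only possible simple summands are $\cA_\lambda$ and (shifts of) the skyscraper $\QQellbar_{\Gr_0}$. Since $\pi$ is an isomorphism over the open stratum $\Gr_\lambda$, the $\cA_\lambda$ summand appears with multiplicity one and no shift, yielding
\[
    \pi_*\QQellbar[d] \simeq \cA_\lambda \oplus \cC,
\]
where $\cC$ is a bounded graded $\QQellbar$-vector space supported at $\Gr_0 \simeq \Spec k$. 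Applying the projection formula gives
\[
    \pi_*\pi^* (h_\mu^{\lambda,0})^*\cL_\psi [d] \simeq (h_\mu^{\lambda,0})^*\cL_\psi \otimes (\cA_\lambda \oplus \cC),
\]
and since $h_\mu^{\lambda,0}$ sends the identity coset $\Gr_0$ to $0 \in L\GG_a/L^+\GG_a$ (as $h(\ad(\varpi^\mu)(\id)) = 0$) and $\cL_\psi$ is trivial at the origin by multiplicativity, the restriction $(h_\mu^{\lambda,0})^*\cL_\psi|_{\Gr_0}$ is $\QQellbar$; hence $\cC \otimes (h_\mu^{\lambda,0})^*\cL_\psi \simeq \cC$, which gives the desired decomposition.

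It remains to compute $H^*(\cC)$. Since $\pi^{-1}(\Gr_0) \simeq (\bar G/\bar P_\lambda)^{\pf}$, proper base change gives
\[
    i_{\Gr_0}^* \pi_*\QQellbar[d] \simeq R\Gamma\bigl((\bar G/\bar P_\lambda)^{\pf}, \QQellbar\bigr)[d],
\]
whose $i$-th cohomology is $H^{i+d}(\bar G/\bar P_\lambda, \QQellbar)$. For $i \geq 0$, the strict support condition for the IC sheaf $\cA_\lambda$ forces $\cH^j(i_{\Gr_0}^*\cA_\lambda) = 0$ for $j \geq 0$, so the direct sum decomposition immediately yields $H^i(\cC) = H^{i+d}(\bar G/\bar P_\lambda, \QQellbar)$. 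For $i<0$ I would invoke Verdier self-duality: $\wt\Gr_{\leq\lambda}$ smooth of dimension $d$ gives $\pi_*\QQellbar[d] \simeq \DD(\pi_*\QQellbar[d])(-d)$, and $\cA_\lambda$ is self-dual up to the same twist, so $\cC$ inherits self-duality. Combined with Poincar\'e duality on the smooth projective $(\bar G/\bar P_\lambda)^{\pf}$ of dimension $d-1$, we obtain
\[
    \dim H^i(\cC) = \dim H^{-i}(\cC) = \dim H^{-i+d}(\bar G/\bar P_\lambda) = \dim H^{i+d-2}(\bar G/\bar P_\lambda),
\]
which gives the formula in negative degrees.

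The main obstacle is bookkeeping the Tate twists so that Verdier self-duality really identifies $\cC$ with its dual (not just up to a shift), and checking that the semisimple decomposition of $\pi_*\QQellbar[d]$ forces $\cC$ to be a \emph{pure} complex of skyscrapers at $\Gr_0$ with no $\cA_\lambda$-shifts contributing (which is what rules out extra terms beyond $\cA_\lambda \oplus \cC$).
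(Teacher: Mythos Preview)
Your proposal is correct and follows exactly the paper's approach: decomposition theorem for $\pi_*\QQellbar[d]$, then projection formula (with the observation that $(h_\mu^{\lambda,0})^*\cL_\psi$ is trivial at $\Gr_0$). The paper simply cites \cite[Section~2.2.2]{Zhu14} for the identification of $H^*(\cC)$, whereas you have unpacked that computation via proper base change, the strict-support condition on $\cA_\lambda$, and Verdier self-duality combined with Poincar\'e duality on $\bar G/\bar P_\lambda$ (which indeed has dimension $d-1$ since $\Gr_\lambda \to (\bar G/\bar P_\lambda)^{\pf}$ is an $(\AA^1)^{\pf}$-bundle); this is precisely the argument in Zhu. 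Your reading of the indices on the right-hand side as typos for $\cA_\lambda \otimes (h_\mu^{\lambda,0})^*\cL_\psi$ is also correct.
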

\begin{proof}
As in \cite[Section 2.2.2]{Zhu14} we use the decomposition theorem to obtain
    \[ \pi_*\QQellbar[d] = \cA_\lambda \oplus \cC \]
with $\cC$ having the desired cohomology. Then the projection formula gives
\begin{align*}
    \pi_* \pi^* (h_\mu^{\lambda,0})^*\cL_\psi [d] & \simeq \pi_*(\QQellbar[d] \otimes \pi^*(h_\mu^{\lambda,0})^*\cL_\psi)  \\
    & \simeq \pi_*\QQellbar[d] \otimes (h_\mu^{\lambda,0})^*\cL_\psi \\
    & \simeq \smbr{ \cA_\lambda \otimes (h_\mu^{\lambda,0})^* \cL_\psi } \oplus \smbr{ \cC \otimes (h_\mu^{\lambda,0})^*\cL_\psi} \\
    & \simeq \smbr{ \cA_\lambda \otimes (h_\mu^{\lambda,0})^* \cL_\psi } \oplus \cC. \qedhere
\end{align*}
\end{proof}

Since $\pi$ is proper (hence $\pi_* = \pi_!$) we obtain
\begin{equation}
\label{eq:decomposition_theorem}
    R\Gamma_c(\pi^{-1}(\MV_{\lambda,0}), \pi^*(h_\mu^{\lambda,0})^* \cL_\psi) [d] = R\Gamma_c(\MV_{\lambda, 0}, \cA_\lambda \otimes (h_\mu^{\lambda,0})^* \cL_\psi) \oplus \cC
\end{equation}

\subsection{Main argument}
\label{zero_orbit_cohomological_argument}
By \autoref{eq:decomposition_theorem} to prove \autoref{cohomology:zero_orbit}, it suffices to show:

\begin{p}
\label{p:cohomology_of_strata_S0capleGrmu}
Let $d = \anbr{2\rho,\lambda}$. We have
\[
    \dim H^{i+d}_c(\pi^{-1}(\MV_{\lambda,0}),\pi^*(h_\mu^{\lambda,0})^*\cL_\psi) =
    \begin{cases}
        \dim H^{i+d}(\bar G/\bar P_\lambda,\QQellbar) &\tx{if } i > 0 \\
        \dim H^{i+d-2}(\bar G/\bar P_\lambda,\QQellbar) &\tx{if } i < 0 \\
        |\Delta_{\lambda^\vee}^\mu| + |\Delta_{\lambda^\vee}|  &\tx{if } i = 0 \\
    \end{cases}
\]
\end{p}

By \autoref{lem:identification_of_section} we have the following open-closed decomposition:
\[
\begin{tikzcd}
    \cL^- \rar[hook]  & \pi^{-1}(\MV_{\lambda,0}) & \phi^{-1}((\bar G/\bar P_\lambda)_+)  \cap \pi^{-1}(\MV_{\lambda, 0}) \simeq (\bar G/\bar P_\lambda)_+^{\pf} \lar[hook]
\end{tikzcd}
\]
inducing the long exact sequence
\begin{equation}
\label{eq:long_exact_sequence_2}
    \cdots \to H^{i+d}_c(\cL^-, (h_\mu^{\lambda,0})^* \cL_\psi) \to H^{i+d}_c(\pi^{-1}(\MV_{\lambda,0}),  \pi^* (h_\mu^{\lambda,0})^* \cL_\psi) \to H^{i+d}_c( (\bar G/\bar P_\lambda)_+, \QQellbar) \to \cdots
\end{equation}

We will now analyze the three cases $i = 0$, $i < 0$, and $i > 0$. Let us first recall the dimensions of all objects of interest.

\begin{remark}[{{\cite{Bernstein1973SCHUBERTCA}}}]
\label{rem:cohomology_of_G/P}
There is a stratification
    \[ \bar G / \bar P_\lambda = \bigcup_{w \in W} \bar N w \bar P_\lambda / \bar P_\lambda \]
such that each $\bar N w \bar P_\lambda / P_\lambda$ is an affine space of dimension
\[
    \dim \bar N w \bar P_\lambda / \bar P_\lambda = \begin{cases}
        \anbr{\rho,\lambda + w\lambda} \le \frac{d}{2}-1 & w\lambda^\vee \in \Phi_-, \tx{ equality iff $w\lambda^\vee$ is opposite of simple root} \\
        \anbr{\rho,\lambda + w\lambda} - 1  \ge \frac{d}{2}& w\lambda^\vee \in \Phi_+, \tx{ equality iff $w \lambda^\vee$ is a simple root }
    \end{cases}
\]
Therefore, by \autoref{p:collapse_of_ss},
\[
    R\Gamma_c(\bar G / \bar P_\lambda, \QQellbar) = \smbr{\bigoplus_{w\lambda^\vee \in \Phi_-} \QQellbar[-2\anbr{\rho,\lambda+w\lambda}]} \oplus \smbr{\bigoplus_{w\lambda^\vee \in \Phi_+} \QQellbar[-2\anbr{\rho,\lambda+w\lambda}-1]}.
\]
In particular, we will be using the fact that
if $w\lambda^\vee \in \Phi_-$,
    \begin{center}
\begin{tabular}{c|c}
Bundle & \tx{dimension} \\
    $\cL^- $  & $d/2$  \\
    $\cL^-_w $  & $\anbr{\rho,w\lambda} + \frac{d}{2}+1 \le \frac{d}{2}$
\end{tabular}
\end{center}
\end{remark}

\subsection{Case of \texorpdfstring{$i>0$}{i gt zero}}
Since $\dim \cL^- = d/2$ by \autoref{rem:cohomology_of_G/P} and since the \'etale cohomological dimension of a finite type scheme $X$ is bounded by $2 \dim X$, \cite[VI, Theorem 1.1]{MilEC},
    \[ H^{i+d}_c(\cL^-,(h_\mu^{\lambda,0})^*\cL_\psi) = H^{i+d+1}_c(\cL^-, (h_\mu^{\lambda,0})^*\cL_\psi) = 0 \]
Further, by \autoref{rem:cohomology_of_G/P}
    \[ H^{i+d}_c((\bar G/\bar P_\lambda)_+,\QQellbar) = H^{i+d}_c(\bar G/\bar P_\lambda, \QQellbar) \]
whenever $i > 0$.

Thus, by \autoref{eq:long_exact_sequence_2}, the equality follows.

\subsection{Case of \texorpdfstring{$i=0$}{i is zero}}

Again since $\dim \cL^- \le d/2$,  we have
\[
    H^{d+1}_c(\cL^-, \pi^*(h_\mu^{\lambda,0})^*\cL_\psi) = 0.
\]
Note $\pi^*(h_\mu^{\lambda,0})^*\cL_\psi$ restricts to the constant sheaf $\QQellbar$ on $(\bar G/ \bar P_\lambda)^{\pf}_+ \subset \pi^{-1}(\Gr_0)$ since the map factors as (where $h_\mu^{\lambda,0}$ factors through $\Spec \bar{k}$ on this component)
\[
\begin{tikzcd}
    & \pi^{-1}(\MV_{\lambda,0})  \rar{\pi}  &  \MV_{\lambda,0} \\
    & \pi^{-1}(\Gr_0) \ar[u, hook] \ar[r]  &  \Gr_0 = * \ar[u, hook]
\end{tikzcd}
\]
Thus, as $H^{d-1}_c((\bar G/\bar P_\lambda)_+, \QQellbar) = 0$ by \autoref{rem:cohomology_of_G/P}, \autoref{eq:long_exact_sequence_2} reduces to
\[
    0 \to H_c^d(\cL^-, \pi^*(h_\mu^{\lambda,0})^*\cL_\psi) \to H_c^d(\pi^{-1}(\MV_{\lambda,0}), \pi^* (h_\mu^{\lambda,0})^*\cL_\psi ) \ra H^d_c((\bar{G}/\bar{P}_\lambda)_+, \QQellbar) \to 0
\]

We know $\dim H^d_c((\bar{G}/\bar{P}_\lambda)_+, \QQellbar) = |\Delta_{\lambda^\vee}|$ by  \autoref{rem:cohomology_of_G/P}.

By \autoref{p:collapse_of_ss} applied to the stratification
\[
    \cL^- = \bigcup_{w\lambda^\vee \in \Phi_-} \cL_w^-
\]
we get
\[
    \dim H^d_c(\cL, \pi^*(h_\mu^{\lambda,0})^*\cL_\psi) = \sum_{\substack{w \in W/\Stab_W(\lambda^\vee) \\ w\lambda^\vee \in \Phi_-}} \dim H^d_c(\cL_w, j_w^*\pi^*(h_\mu^{\lambda,0})^*\cL_\psi)  = |\Delta^\mu_{\lambda^\vee}|
\]
where the last equality follows from the proposition below, \autoref{cohomology_line_bundles}.

For this we will need the following lemma on the behavior of $h_\mu^{\lambda,0}$, which the reader is encouraged to skip on a first reading.

\begin{proposition}
\label{prop:hmu_on_pieces}
Suppose $w\lambda^\vee \in \Phi_-$.
\begin{enumerate}
    \item If $-w\lambda^\vee$ is a simple root and $\anbr{-w\lambda^\vee, \mu} = 0$, the following diagram commutes for any $b = \bar n w\bar P_\lambda \in (\bar N w \bar P_\lambda/\bar P_\lambda)^{\pf}(k)$ (where $\bar n \in N(k)$):
    \[
    \begin{tikzcd}
        F_b \dar \rar \ar[rrr, bend left=20, "f"]
        & \cL^-_w \ar[d]
        \rar  & \MV_{\lambda,0} \rar[swap]{h_\mu^{\lambda,0}} \ar[d]  & L^{\ge -1}\GG_a/ L^+\GG_a \\
        b \rar & (\bar{N}w\bar{P}_\lambda/\bar{P}_\lambda)^{\pf} \rar[hook] & (\bar{G}/\bar{P}_\lambda)^{\pf}
    \end{tikzcd}
    \]
    where $f$ is an isomorphism. In particular, the integer $s$ chosen in \autoref{lemma:definition_of_h_map} can be taken to be 1 for $\MV_{\lambda,0}$.
    \item Otherwise, the restriction of $h^{\lambda,0}_\mu$ to $\cL^-_w$ is trivial.
\end{enumerate}
\end{proposition}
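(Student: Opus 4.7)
My plan is to verify both claims on $k$-points via \autoref{lem:morphism_k_points}, by explicitly parameterizing $\cL^-_w$, computing $\pi$ on it, and then applying the formula for $h_\mu^{\lambda,0}$ from \autoref{lemma:definition_of_h_map}.

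First I would parameterize $\cL^-_w$ over a point $b = \bar n w \bar P_\lambda \in (\bar N w \bar P_\lambda/\bar P_\lambda)^{\pf}$. Write $\gamma = \lambda^\vee$, the unique root with $\anbr{\gamma,\lambda} = 2$. Comparing the root-by-root decompositions of $Q_{1/2}$ and $Q_{3/4}$, these differ only in the $\gamma$-component ($\varpi N_\gamma(\cO)$ vs.\ $\varpi^2 N_\gamma(\cO)$), so the Bruhat decomposition of $Q_{1/2}/Q_{3/4} \simeq (\PP^1)^{\pf}$ has two cells: the ``origin'' $eQ_{3/4}/Q_{3/4}$, which is exactly $s(b)$, and the ``big cell,'' which I would parameterize by $y \in k$ via $y \mapsto \xi_y := \varpi^{-1} N_{-\gamma}(y) \cdot s_\gamma \varpi^{-\lambda} \cdot Q_{3/4}/Q_{3/4}$, where $s_\gamma \in G(\cO)$ is the chosen Weyl element lift (one checks $s_\gamma \varpi^{-\lambda} \in Q_{1/2}$ by a direct verification of valuations on each root-subgroup factor). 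This identifies the fiber $F_b$ with $\AA^1 \simeq k$, and a general $k$-point of $\cL^-_w$ above $b$ has the form $(nw, \xi_y)$ for some $n \in N(\cO)$ lifting $\bar n$ and some $y \in k$.

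Using $s_\gamma, w \in G(\cO)$ and the standard conjugation $wN_{-\gamma}(y)w^{-1} = N_{-w\gamma}(\pm y)$, a direct computation gives
\begin{align*}
    \pi(nw, \xi_y)
    &= nw \cdot \varpi^{-1} N_{-\gamma}(y)\, s_\gamma \varpi^{-\lambda} \cdot \varpi^\lambda \cdot G(\cO) \\
    &= n \cdot N_\beta(\varpi^{-1} y) \cdot G(\cO),
\end{align*}
where $\beta := -w\gamma$. The hypothesis $w\gamma = w\lambda^\vee \in \Phi_-$ gives $\beta \in \Phi_+$, so $nN_\beta(\varpi^{-1}y) \in N(F)$, and by \autoref{lemma:definition_of_h_map},
\[
    h_\mu^{\lambda,0}(\pi(nw, \xi_y)) = h\bigl(\ad(\varpi^\mu)(n)\bigr) + h\bigl(N_\beta(\varpi^{\anbr{\beta,\mu}-1} y)\bigr) \in L\GG_a/L^+\GG_a.
\]

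Finally I would analyze the two summands to finish. Since $\mu$ is dominant, $\ad(\varpi^\mu)(n) \in N(\cO)$, so $h(\ad(\varpi^\mu)(n)) \in L^+\GG_a$ vanishes modulo $L^+\GG_a$. The abelianization $LN \to LN/[LN,LN] \simeq \bigoplus_{\alpha \in \Delta} L\GG_a$ kills every non-simple root subgroup, so if $\beta \notin \Delta$ the second summand is zero, giving case~(2). If $\beta = \alpha \in \Delta$ is simple, the second summand is the class of $\varpi^{\anbr{\alpha,\mu} - 1} y$, which vanishes modulo $L^+\GG_a$ whenever $\anbr{\alpha,\mu} \geq 1$ (again case~(2)). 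In the remaining situation $-w\lambda^\vee = \alpha \in \Delta$ with $\anbr{\alpha,\mu} = 0$, the composition is $y \mapsto [\varpi^{-1} y] \in L^{\geq-1}\GG_a/L^+\GG_a \simeq \varpi^{-1}\cO/\cO$, which is the identity on $k$ and hence an isomorphism, giving case~(1). The main obstacle is the setup in the second paragraph---identifying the correct Weyl-twisted representative for the big cell of $Q_{1/2}/Q_{3/4}$ and verifying $s_\gamma \varpi^{-\lambda} \in Q_{1/2}$; once in place, the computation reduces to a root-by-root unwinding of $h_\mu$.
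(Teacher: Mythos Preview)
Your approach is correct and takes a genuinely different route from the paper. The paper restricts to the dense $(\GG_m)^{\pf}$-piece $\cL_w \cap \cL_w^-$ and uses the parametrization of $\cL_w$ from \autoref{identification_of_Lw} by elements $nN_{w\lambda^\vee}(\varpi x)\varpi^{w\lambda}G(\cO)$ (so via the \emph{positive} root subgroup $N_\gamma$); it then invokes the Steinberg relation in the $\SL_2$ attached to $\gamma$ to rewrite this as $nN_{-w\lambda^\vee}(\varpi^{-1}x^{-1})G(\cO)$, computes $h_\mu^{\lambda,0}$ on that, and finally argues separately that the remaining point of $F_b$ (lying over $\Gr_0$) maps to zero, so that the bijection on $\GG_m$ extends to an isomorphism on $\AA^1$. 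You instead parametrize all of $F_b \simeq (\AA^1)^{\pf}$ at once by the opposite affine chart of $Q_{1/2}/Q_{3/4}$ centered at the affine Weyl representative $s_\gamma\varpi^{-\lambda}$, which lands you directly in the $N_{-w\lambda^\vee}$-form without any Steinberg manipulation; the two coordinates are related by $y \leftrightarrow x^{-1}$ on the overlap. Your route is more self-contained and avoids the separate extension step, at the cost of having to produce and verify the opposite-cell representative (which, as you note, is the main obstacle); the paper's route reuses \autoref{identification_of_Lw} but pays for it with the Steinberg identity.

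Two small points. First, your expression $\varpi^{-1}N_{-\gamma}(y)$ must be read as $N_{-\gamma}(\varpi^{-1}y)$; as written it looks like a product with a scalar that is not an element of $G$. Second, what you call the ``Bruhat decomposition'' of $Q_{1/2}/Q_{3/4}$ into $\{eQ_{3/4}\}$ and its complement is not the Schubert cell decomposition (there the identity coset lies inside the open $\AA^1$-cell $Q_{1/4}Q_{3/4}/Q_{3/4}$); it is rather the opposite affine chart about the point $s_\gamma\varpi^{-\lambda}Q_{3/4}$. This does not affect your argument, since all you actually use is that the complement of $eQ_{3/4}$ is an $(\AA^1)^{\pf}$ parametrized as you describe.
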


\begin{proof}
We prove both points simultaneously. To compute $f$ we consider its restriction to the $(\GG_m)^{\pf}$-bundle
\[
    (S_0 \cap \Gr_\lambda) \cap \phi^{-1}((\bar N w \bar P_\lambda/\bar P_\lambda)^{\pf}) = \cL_w^- \cap \cL_w.
\]
In case (1), we show that $f|_{\GG_m}$ exists and is a bijection on $k$-points in the following diagram:
\[
\begin{tikzcd}
    \smbr{\GG_m}^{\pf} \simeq S_0 \cap \Gr_\lambda \cap F_b \ar[d] \rar["{f|_{\GG_m}}"] & \GG_m^{\pf} \dar[ hook]{x^{-1} \mapsto \varpi^{-1}x^{-1}} \\
    (\AA^1)^{\pf} \simeq F_b \rar["f"] & L^{\ge -1} \GG_a/ L^+\GG_a
\end{tikzcd}
\]
and since $f$ maps $\cL^-_w \setminus \cL_w$ to zero (since it maps to $\Gr_0$ under $\pi$), we deduce that $f$ is an isomorphism. For case (2) a similar argument applies to show that $f$ is the zero map.

By \autoref{identification_of_Lw}, every element $y \in (S_0 \cap \Gr_\lambda \cap F_b)(k)$ can be written in the form
\[
    y = n w N_{\lambda^\vee}(\varpi x) \varpi^\lambda G(\cO) = nN_{w\lambda^\vee}(\varpi x) \varpi^{w \lambda} G(\cO).
\]
for $x \in \cO \setminus \varpi\cO$ and $n\in N(\cO)$ lifting $\bar n$. Moreover, for a fixed lift $n$, this expression is unique up to adding an element in $\varpi \cO$ to $x$. Now let $t := -\varpi x \in \cO$ and $\alpha = w \lambda^\vee$.
For any root $\beta \in \Phi$ and $s \in \cO^\times$, the Steinberg relation (\cite[Ch3. Lemma 19]{LectureSteinbergChevalley}) says that
\[
    \beta^\vee(s) w_\beta = N_\beta(s)N_{-\beta}(-s^{-1})N_\beta(s)
\]
where $w_\beta$ is a lift of the simple Weyl element $s_\beta \in W$. Now as  $N_\beta(s)w_\beta^{-1} \in G(\cO)$ we deduce that
\[
    nN_{\alpha}(-t)\alpha^\vee(t) G(\cO) = nN_{-\alpha}(\varpi^{-1}x^{-1}) G(\cO).
\]
Since $\mu$ is dominant and $n \in N(\cO)$ we have $h_\mu(n) = 0$. If $-\alpha$ is not a simple root the map $h$ kills $N_{-\alpha}$. If $\anbr{-\alpha, \mu}>0$, then $\varpi^{\anbr{-\alpha,\mu}-1}x^{-1} \in L^+\GG_a$, and $h$ is trivial on $L^+G$. Therefore,
\[
    h_\mu^{\lambda,0}(y) = h_\mu(N_{-\alpha}(\varpi^{-1}x^{-1})) = h(N_{-\alpha}(\varpi^{\anbr{-\alpha,\mu}-1}x^{-1})) = \begin{cases}
        0 & \tx{if }\alpha \notin \Delta \tx{ or }\anbr{-\alpha, \mu}>0 \\
        \varpi^{-1}x^{-1} & \anbr{-\alpha, \mu}=0
    \end{cases}
\]
Case (2) immediately follows. If $-\alpha$ is a simple root and $\anbr{-\alpha,\mu} = 0$ then it is clear that $f|_{\GG_m}$ exists and is is a bijection, which proves case (1).
\end{proof}

\begin{p}
\label{cohomology_line_bundles}
Suppose $w\lambda^\vee \in \Phi_-$.
    \[
\dim H^d_c\smbr{\cL^-_w, j_w^*\pi^*(h_\mu^{\lambda,0})^*\cL_\psi} =
\begin{cases}
    1 & \anbr{-w\lambda^{\vee} , \mu} >0   \text{ and } -w\lambda^\vee \in \Delta \\
    0 & \tx{ otherwise }
\end{cases}
\]
\end{p}

\begin{proof} If $-w\lambda^\vee$ is not a simple root then $H^d_c(\cL_w^- , j_w^*\pi^*(h_\mu^{\lambda,0})^*\cL_\psi)$ vanishes, as $\dim \cL^-_w < \frac{d}{2}$. Suppose $-w \lambda^\vee$ is a simple root.
\begin{itemize}
    \item If $\anbr{-w\lambda^\vee, \mu}>0$ then the map $h_\mu^{\lambda,0}$ is trivial by \autoref{prop:hmu_on_pieces}, so
    \[ j_w^*\pi^*(h_\mu^{\lambda,0})^*\cL_\psi = \QQellbar \]
    then $\dim \cL^-_w = \frac{d}{2}$. By Poincar\'e duality and the fact that $\cL_w^-$ (being an $(\AA^1)^{\pf}$-fibration over the perfection of an affine space) is smooth and connected,
        \[ H^d_c(\cL^-_w, \QQellbar) \simeq H^0(\cL^-_w , \QQellbar) = \QQellbar. \]
    \item If $\anbr{-w\lambda^\vee, \mu} = 0$. By \autoref{prop:hmu_on_pieces}, we know the map $f$ is an isomorphism in the diagram
    \[
        \begin{tikzcd}
            F_b \ar[d] \ar[dr, phantom, "\lrcorner"]  \rar[hook,"i_b"] \ar[rrr, bend left=20, "f"]\rar[hook] & \cL^-_w \ar[d]
            \rar{j_w} & \MV_{\lambda,0} \rar{h_\mu^{\lambda,0}} & L^{\ge -1}\GG_a/ L\GG_a \\
            \crbr{b} \rar[hook] &\bar{N}w\bar{P}_\lambda/\bar{P}_\lambda
        \end{tikzcd}
    \]
    where $i^*_bj_w^*(h_\mu^{\lambda,0})^*\cL_\psi= f^*\cL_\psi$. So
    \[
        R\Gamma_c(F_b, f^*\cL_\psi) = R\Gamma_c(L^{\geq-1}\GG_a/L\GG_a, \cL_\psi) = 0,
    \]
    so we have trivial cohomology on the fibers of the affine bundle $\cL^-_w$ at $k$-points, and thus $j^*_w \pi^*(h_\mu^{\lambda,0})^* \cL_\psi$ vanishes.
\end{itemize}
\end{proof}

\subsection{Case of \texorpdfstring{$i<0$}{t}}  Set $\cF_\mu:= \pi^* (h_{\mu}^{\lambda,0})^* \cL_\psi$.

\autoref{rem:cohomology_of_G/P} implies $H^{i+d}_c((G/P)_+, \QQellbar) = 0$ for $i<0$ so we are reduced to showing that
\[
    \dim H^{i+d}_c(\cL^-, \cF_\mu) = \dim H^{i+d-2}_c((G/P)_{-}, \QQellbar).
\]
The right hand side has dimension
\[ \left|  \left\{ w \in W \,:\,
2\smbr{ \anbr{\rho,w\lambda^\vee} + \frac{d}{2}}  = i+d-2 \right\}  \right|
\]
By \autoref{p:collapse_of_ss} applied to the stratifications of $(\bar G / \bar P_\lambda)_-$ and $\cL^-$, it suffices to show that
\[
 H^{i+d-2}_c(\bar N w\bar P /\bar P_\lambda , \QQellbar)=  H_c^{i+d}(\cL^-_w,\cF_\mu)
\]
and that their cohomologies are concentrated in a single degree.

\begin{itemize}
    \item If $-w\lambda^\vee$ is simple and $\anbr{-w\lambda^\vee,\mu} = 0$, then by the Leray spectral sequence, \cite[Section 12.7]{milneLEC},
        \[
        E_2^{rs}:= H^r_c(\bar N w\bar P /\bar P_\lambda , R^s \phi_! \cF_\mu) \Rightarrow H^{r+s}_c(\cL^-_w, \cF_\mu).
        \]
    By base change, we have
    \[
        \smbr{R^s \phi_! \cF_\mu}_b \simeq H^s_c(F_b,i^*_b \cF_\mu) = 0
    \]
    for all $s$ and all $b$, where last equality follows as argued in \autoref{cohomology_line_bundles}.
    Moreover $-w\lambda^\vee$ is a simple root, so $\bar N w \bar P_\lambda / \bar P_\lambda$ is an affine space of dimension $\frac{d}{2}-1$ by \autoref{rem:cohomology_of_G/P}, and thus
        \[ H_c^{i+d}(\cL^-_w,\cF_\mu) = H_c^{i+d-2}(\bar N w\bar P /\bar P_\lambda ,\QQellbar) = 0 \]
    \item Otherwise, we know the restriction of $\cF_\mu$ to $\cL^-_w$ is the constant sheaf, by \autoref{prop:hmu_on_pieces}.
    We can use the \v{C}ech-to-cohomology spectral sequence (\cite[III, Thm. 2.17]{MilEC})
    to show
    \[
        H^{i+d}_c(\cL^-_w, \QQellbar) \simeq H^{i+d-2}_c(\bar N w \bar P_\lambda / \bar P_\lambda, \QQellbar).
    \]
    For an \'etale covering $\cU = \crbr{U_i \ra \bar N w\bar P /\bar P_\lambda } _{i \in I}$ which trivializes $\cL_w^-$, we are reduced to computing cohomology of the trivial affine bundle
        \[  U \times (\AA^1)^{\pf} \ra U \]
    By the K\"{u}nneth isomorphism we deduce that
    \[
        R\Gamma_c(U,\QQellbar) \xra{\simeq}  R\Gamma_c(U,\QQellbar)[2] \otimes R\Gamma_c(\AA^1,\QQellbar)
        \simeq R\Gamma_c(U \times \AA^1,\QQellbar)[2].
    \]
    This implies that
    \[
        R\Gamma_c(\cL_w^-, \QQellbar) \simeq R\Gamma_c(\bar N w \bar P_\lambda / \bar P_\lambda, \QQellbar)[2],
    \]
    from which the conclusion follows.
\end{itemize}

\newpage

\section{Recovering classical Casselman--Shalika}
\label{trace_function_dictionary}

In this section we show that \autoref{t:main_theorem} implies the Casselman--Shalika formula. The equivalent statement of the formula that we use is due to Frenkel--Gaitsgory--Kazhdan--Vilonen in \cite{FGKV}. We first recall their statement.

In \autoref{recollection_trace_function} we briefly recall the properties of the sheaf-function dictionary that are used below.
\begin{itemize}
    \item As $G(F_0)$ is unimodular, we fix a Haar measure $\mu$ on $G(F_0)$, such that $d\mu(G(\cO_0))=1$.  The \textit{spherical Hecke algebra} is defined as
    \[
        \smbr{ \cHk ,\star} :=\Fct_c(G(\cO_0)\bs G(F_0)/G(\cO_0), \QQellbar)
    \]
    the set of $G(\cO_0)$-bi-invariant $\QQellbar$-valued compactly supported functions, with its convolution commutative ring structure, $\star$. The ring $\cHk$ has a basis $\set{H_\lambda}_{\lambda \in X_*(T)_+}$
    where $H_\lambda$ is the image under Satake (see \cite[Section 3]{GrossSatake}) of the character for the highest-weight representation $V^\lambda$. It follows from work of Kato and Lusztig (see \cite[Proposition 5.1]{FGKV}) that
        \[ H_\lambda = (-1)^{2\anbr{\rho,\lambda}} \tr(\cA_\lambda) \]
    where $\tr(\cA_\lambda)$ is the function associated to $\cA_\lambda$ via the sheaf-function dictionary.
    \item The space of \textit{compactly supported unramified Whittaker functions} is
    \[
        \cWhit := \Fct_c(G(F_0)/G(\cO_0),\QQellbar)^{N(F), \psi}
    \]
    of right-$G(\cO)$ and left-$(N(F), \psi)$ invariant compactly supported functions. This has a basis $\crbr{\phi_\lambda}_{\lambda \in X_*(T)_+}$, such that $\phi_\lambda$ is supported on $N(F) \varpi^\lambda G(\cO)$ and by $\psi$-equivariance is uniquely determined by the value
    \[
        \phi_\lambda(\varpi^\lambda) = q^{-\anbr{\rho,\lambda}}.
    \]
\end{itemize}

We have an action of the Hecke algebra on the space of Whittaker functions, $ \cWhit \circlearrowleft \cHk$ given by
\begin{equation}
\label{convolution}
    f \star h  (g):= \int _{G(F)} f(x^{-1} \cdot g )h(x) \, \mathrm{d}\mu(x)  \quad h \in \cHk, f \in \cWhit
\end{equation}
The Casselman-Shalika formula, as explained in \cite[Chapter 5]{FGKV}, describes the action $\cWhit \circlearrowleft \cHk$ with respect to the two bases:
\begin{equation}
\label{eq:hecke_action_classical}
\phi_0 \star H_\lambda = \phi_\lambda
\end{equation}

In the equal characteristic case, this was proven in \cite{FGV99} and \cite{Ngo98}.\footnote{ Note that the equation $\phi_\lambda = \phi_0 \star H_\lambda$, allows us to compute $\phi_\nu \star H_\lambda$ for all $\nu \in X_*(T)_+$. See \cite[Sec. 1.1.8]{FGV99} for further discusion.}
\begin{thm}
\label{thm:hecke_action_whittaker_classical}
\autoref{eq:hecke_action_classical} holds in mixed characteristic.
\end{thm}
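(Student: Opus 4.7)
The plan is to deduce $\phi_0 \star H_\lambda = \phi_\lambda$ from \autoref{t:main_theorem} via the sheaf-function dictionary. Since both sides lie in $\cWhit$ and are thus left $(N(F_0),\psi)$-equivariant and right $G(\cO_0)$-invariant, it suffices to check equality of values at $\varpi^\nu$ for every $\nu \in X_*(T)$. A standard preliminary reduction handles non-dominant $\nu$: the intersection $\varpi^\nu N(\cO_0) \varpi^{-\nu} \cap N(F_0) = \prod_\alpha N_\alpha(\varpi^{\anbr{\alpha,\nu}}\cO_0)$ contains some $N_\alpha(\varpi^{-1}\cO_0)$ for a simple $\alpha$ as soon as $\nu$ is not dominant, on which $\psi \circ h$ is nontrivial by our choice of conductor of $\psi$; this forces every Whittaker function to vanish on $N(F_0)\varpi^\nu G(\cO_0)$, so both sides agree (and are zero) on non-dominant orbits.

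Next, for dominant $\nu$, I would unravel the convolution integral. Substituting $y = x^{-1}\varpi^\nu$ in \autoref{convolution} and using that $\phi_0$ is supported on $N(F_0)G(\cO_0)$ with $\phi_0(nk) = \psi(n)$, then combining the Iwasawa-type parametrization of the support with the bi-$G(\cO_0)$-invariance of $H_\lambda$, the integral collapses to a sum over $k$-points of the semi-infinite orbit $S_\nu$; the support of $H_\lambda$ restricts this further to $\MV_{\lambda,\nu}(\FF_q)$. With properly normalized Haar measures, one obtains
\[
    (\phi_0 \star H_\lambda)(\varpi^\nu) \;=\; q^{-\anbr{2\rho,\nu}} \sum_{x \in \MV_{\lambda,\nu}(\FF_q)} H_\lambda(x)\,\psi(h_0^{\lambda,\nu}(x)),
\]
the factor $q^{-\anbr{2\rho,\nu}}$ arising from translating between the $N(F_0)/N(\cO_0)$-parametrization of $S_0$ and the $S_\nu$-parametrization via conjugation by $\varpi^\nu$. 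This is the main technical step of the proof, and also the chief obstacle: one must carefully track the interaction between the left equivariance of $\phi_0$ and the Haar measure on $G(F_0)$ to get the correct $q$-power, since a miscount here ruins the final match.

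Once the formula above is in hand, the remaining work is formal. By the Kato-Lusztig identity $H_\lambda = (-1)^{\anbr{2\rho,\lambda}} \tr(\cA_\lambda)$ recalled at the start of this section, and by the defining property of $\cL_\psi$ (namely $\psi(g) = \tr(\mathrm{Frob}|(\cL_\psi)_g)$ in \autoref{cor:lustzig_character_sheaves}), the summand equals $(-1)^{\anbr{2\rho,\lambda}}\tr\bigl(\mathrm{Frob}\mid (\cA_\lambda \otimes (h_0^{\lambda,\nu})^*\cL_\psi)_x\bigr)$. Grothendieck's trace formula then rewrites the sum as
\[
    (-1)^{\anbr{2\rho,\lambda}}\sum_i (-1)^i \tr\bigl(\mathrm{Frob} \mid H^i_c(\MV_{\lambda,\nu}, \cA_\lambda \otimes (h_0^{\lambda,\nu})^*\cL_\psi)\bigr).
\]

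Finally, \autoref{t:main_theorem} shows that the cohomology vanishes unless $\nu = \lambda$, in which case it is concentrated in degree $\anbr{2\rho,\lambda}$ and equals $\QQellbar(-\anbr{\rho,\lambda})$, on which Frobenius acts by $q^{\anbr{\rho,\lambda}}$. Plugging this back, the sign $(-1)^{\anbr{2\rho,\lambda}} \cdot (-1)^{\anbr{2\rho,\lambda}}$ cancels, and combining with the prefactor $q^{-\anbr{2\rho,\lambda}}$ yields $(\phi_0 \star H_\lambda)(\varpi^\lambda) = q^{-\anbr{\rho,\lambda}} = \phi_\lambda(\varpi^\lambda)$ while $(\phi_0 \star H_\lambda)(\varpi^\nu) = 0$ for dominant $\nu \neq \lambda$, which matches $\phi_\lambda$ by its definition. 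This completes the proof.
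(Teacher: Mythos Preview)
Your approach is essentially the same as the paper's: reduce the convolution $(\phi_0 \star H_\lambda)(\varpi^\nu)$ to a sum over $\MV_{\lambda,\nu}(\FF_q)$, identify that sum via the sheaf--function dictionary with the alternating trace of Frobenius on $H^\bullet_c(\MV_{\lambda,\nu}, \cA_\lambda \otimes (h_0^{\lambda,\nu})^*\cL_\psi)$, and invoke \autoref{t:main_theorem}.

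Your write-up is in fact more explicit than the paper's on two points. First, you treat the non-dominant $\nu$ case separately, whereas the paper leaves this implicit (both sides lie in $\cWhit$, which is supported on dominant orbits). Second, you actually unravel the convolution integral and keep track of the Jacobian factor $q^{-\anbr{2\rho,\nu}}$ coming from the change of parametrization $N(F_0)/N(\cO_0) \to S_\nu$ by conjugation by $\varpi^\nu$; the paper compresses this entire computation into a citation of \autoref{lem:comparing_integrals} and the phrase ``coincides with the left hand side up to a sign of $(-1)^{2\anbr{\rho,\lambda}}$''. One cosmetic discrepancy: the paper twists by $\cL_{\psi^{-1}}$ rather than $\cL_\psi$, reflecting a sign convention in how the $(N,\psi)$-equivariance interacts with the substitution in the convolution integral; your version with $\cL_\psi$ is consistent with your parametrization and yields the same final answer.
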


\begin{proof}
$\cA_\lambda$ has a Weil structure via base change, since it can be defined over the rational Witt vector affine Grassmannian (defined over $\FF_q$); see \cite[Corollary 5.3.8]{Ach-p}. $\cL_\psi$ has a Weil structure by construction (see \autoref{cor:lustzig_character_sheaves}).
Applying \autoref{thm:trace_function} with respect to the projection map $\pi:\MV_{\lambda,\nu} \ra *$ and the  sheaf $\cA_\lambda \otimes (h_0^{\lambda,\nu})^*\cL_{\psi^{-1}}$. we obtain the evaluation of the left hand side of \autoref{eq:hecke_action_classical} at $\varpi^\nu G(\cO)$:
\begin{align*}
    \tr\smbr{\pi_!  \smbr{\cA_\lambda \otimes (h_0^{\lambda,\nu})^* \cL_{\psi^{-1}}}} & = \int_\pi \tr\cA_\lambda (x) \cdot (\tr \cL_{\psi^{-1}} \circ h_0^{\lambda,\nu})(x) \\
    &= \sum_{n \varpi^\nu G(\cO)  \in \MV_{\lambda,\nu}(k)} A_\lambda(n\varpi^\nu) \psi^{-1}(n)
\end{align*}

That $\tr \cL_{\psi^{-1}} \circ h_0^{\lambda,\nu}$ coincides with $\psi^{-1}$ follows by construction (see \autoref{cor:lustzig_character_sheaves}). By \autoref{lem:comparing_integrals},
this coincides with the left hand side of  \autoref{eq:hecke_action_classical} up to a sign of $(-1)^{2 \anbr{\rho ,\lambda}}$.
On the other hand,
\[
\tr(\QQellbar [-\anbr{2\rho, \lambda}] (-\anbr{\rho, \lambda})) =
\begin{cases}
   (-1)^{-2\anbr{\rho, \lambda}} q^{-\anbr{\rho, \lambda}} & \nu=\lambda \\
    0 & \tx{ otherwise }
\end{cases}
\]
Combining these two computations and the preceding discussion yields \autoref{eq:hecke_action_classical}.
\end{proof}

\begin{lem}
\label{lem:comparing_integrals}
    Suppose that $G$ is unimodular topological group. $\mu_{G/H}$ the canonical induced quotient measure, and that $\mu(H)=1$. Let $f$ be a function which is right $H$-equivariant. Then
    \[
    \int_G f(g) \, \mathrm{d}\mu_G(g) = \int_{G/H} f(gH) \, \mathrm{d}\mu_{G/H}(gH)
    \]
\end{lem}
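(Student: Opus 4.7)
The plan is to recognize this as an immediate consequence of the defining property of the quotient measure, together with the normalization $\mu(H)=1$. Since $G$ is unimodular and $H$ is a closed subgroup (here in practice $H = G(\cO)$ inside $G = G(F_0)$), the quotient measure $\mu_{G/H}$ is uniquely characterized by the Weil integration formula: for any compactly supported continuous function $F$ on $G$,
\[
\int_G F(g) \, d\mu_G(g) = \int_{G/H} \left( \int_H F(gh) \, d\mu_H(h) \right) d\mu_{G/H}(gH).
\]
I would cite this as a standard fact from the theory of Haar measures on homogeneous spaces (for instance Bourbaki, \emph{Intégration}, Ch.~VII, §2, or Folland's \emph{A Course in Abstract Harmonic Analysis}, §2.6).

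Next I would apply this identity to $F = f$. The hypothesis that $f$ is right $H$-invariant (which is what ``equivariant'' means here, since the function descends to $G/H$) yields $f(gh) = f(g)$ for all $h \in H$, so the inner integral collapses to
\[
\int_H f(gh)\, d\mu_H(h) = f(g) \cdot \mu_H(H) = f(g),
\]
using the normalization $\mu_H(H) = 1$. Because $f(g)$ depends only on the coset $gH$, this integrand makes sense as a function on $G/H$ and equals $f(gH)$.

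Combining the two displays gives the claimed equality
\[
\int_G f(g)\, d\mu_G(g) = \int_{G/H} f(gH)\, d\mu_{G/H}(gH),
\]
and there are no genuine obstacles: the only subtlety is making sure that $\mu_{G/H}$ is normalized compatibly with the chosen Haar measure on $G$ and with $\mu_H(H)=1$, which is precisely the convention fixed at the start of Section~\ref{trace_function_dictionary}. So the entire proof amounts to one invocation of Weil's formula plus the normalization.
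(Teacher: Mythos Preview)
Your proposal is correct and follows essentially the same route as the paper: invoke the defining Weil integration formula for the quotient measure, then use right $H$-invariance together with $\mu_H(H)=1$ to collapse the inner integral. The paper's proof is just a terser version of what you wrote.
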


\begin{proof} We have by definition
\[
\int_{G/H} \smbr{\int_H f(gh) \, \mathrm{d}\mu_H(h) } \,\mathrm{d}\mu_{G/H}(gH) = \int_G f(g) \, \mathrm{d}\mu_G(g) .
\]
In particular, if $f$ is \textit{right $H$-equivariant and $\mathrm{d}\mu_H(H)=1$}, then
\[
\int_{G/H}f(g) \, \mathrm{d}\mu_H(H) \mathrm{d}\mu_{G/H}(gH) = \int_{G/H} f(g) \, \mathrm{d}\mu_{G/H} (gH).
\]
\end{proof}

\subsection{Recollection on the sheaf-function dictionary}
\label{recollection_trace_function}

If $X_0$ is a pfp perfect $\FF_q$-scheme, let $X:=\Spec k \times_{\Spec \FF_q} X_0$ and consider the relative $q$-Frobenius
    \[ \Fr_q = \id_{\Spec k} \times \Fr_{0,q}: X \to X \]
where $\Fr_{0,q}$ is the absolute $q$-Frobenius on $X_0$.

Let us recall the basic properties of the sheaf-function dictionary.

\begin{definition}
\label{def:trace_function}
If $\cF \in D^b_c(X,\QQellbar)$ equipped with a \textit{Weil structure} $\theta: \Fr^*_q \cF \xra{\simeq} \cF$
and $x \in X_0(\FF_q) = X(k)^{\Fr_q}$, define
\begin{align*}
    \tr(\cF): X_0(k) &\ra \QQellbar \\
    x &\mapsto \sum_{i \in \ZZ} (-1)^i \tr (\theta_{\bar x}, H^i(X,\cF_{\bar{x}}))
\end{align*}
\end{definition}

\begin{p}
\label{thm:trace_function}
If $f_0:X_0 \ra Y_0$ is morphism of pfp perfect $\FF_q$-schemes, let $\cF,\cF' \in D^b_c(X,\QQellbar)$, $\cG \in D^b_c(Y, \QQellbar)$ be sheaves with Weil structures. Then
\begin{enumerate}
    \item $\tr (\cF \otimes \cF') = \tr (\cF) \tr (\cF')$
    \item $tr(f^*\cF) = \tr(\cF) \circ f$
    \item If $f$ is proper,
    \[
        \tr(f_!\cF) = \int_f \tr(\cF)
    \]
    where
    \[
        \int_f \tr(\cF) (y) = \sum_{x \in X_0(\FF_{q}), x \in f^{-1}(y)} \tr(\cF)(x)
    \]
\end{enumerate}
\end{p}

\newpage

\appendix

\section{Cohomology of stratified spaces}
\label{sec:cohomology_stratified_space}

We record a basic result about the compactly supported cohomology of complexes of sheaves on stratified schemes that we need. We think this is fairly standard, but cannot find a reference, so we write it out here.

Let $X$ be a finitely presented $k$-scheme with a finite stratification by locally closed subschemes
    \[ X = \bigcup_{\alpha \in A} C_\alpha. \]
where $A$ is some finite partially ordered set. If $|A| = n$, pick an order-preserving bijection
    \[ \sigma: A \ra \set{1,\dots,n} \]
and let $C_p = C_{\sigma^{-1}(p)}$ for $p = 1,\dots,n$.
\begin{p}
\label{p:spectral_sequence_for_stratification}
Let $\cF \in D^b_c(X,\QQellbar)$, there is a spectral sequence\footnote{Implicitly we mean that $E_1^{p,q} = 0$ if $p \not \in \set{1,\dots,n}$.}
\[
    E_1^{p,q} = H^{p+q}_c(C_p, \cF|_{C_p}) \Rightarrow H^{p+q}_c(X,\cF)
\]
\end{p}
\begin{proof}
This is fairly straightforward (see for example \cite[Section 3]{Arapura}), but we work out the details for the reader's convenience. Let $X_p = \bigcup_{i=1}^p C_i$. Then there is a decreasing sequence
    \[ X = X_n \supset X_{n-1} \supset \cdots \supset X_1 \supset X_0 = \varnothing \]
of closed immersions such that $X_p \setminus X_{p-1} = C_p$. If we let $j_p: X \setminus X_p \hra X$ denote the open immersion, then this gives rise to an exhaustive decreasing filtration
    \[ \cdots = \cF = \cF \supset (j_1)_! j_1^* \cF \supset \cdots \supset (j_{n-1})_! j_{n-1}^* \cF \supset (j_n)_!j_n^* \cF = 0 = 0 = \cdots \]
in $D^b_\ell(X)$. The usual spectral sequence for a filtered complex is
    \[ E_1^{p,q} = H_c^{p+q}(X, (j_{p-1})_!j_{p-1}^*\cF/(j_p)_!j_p^*\cF) \Rightarrow H_c^{p+q}(X,\cF). \]
Consider the diagram
\[
\begin{tikzcd}
    & C_p = X_p \setminus X_{p-1} \dar[swap]{i_{p-1,p}} \drar{i_p} &  \\
    X \setminus X_p \rar[hook]{j_{p-1,p}} \ar[rr,bend right,"j_p"] & X \setminus X_{p-1} \rar[hook]{j_{p-1}} & X
\end{tikzcd}
\]
Observe that
    \[ (j_p)_!j_p^*\cF = (j_{p-1})_!(j_{p-1,p})_!j_{p-1,p}^* j_{p-1}^* \cF. \]
Since $(j_{p-1})_!$ is a left adjoint it preserves quotients, so
    \[ (j_{p-1})_!j_{p-1}^*\cF/(j_p)_!j_p^*\cF = (j_{p-1})_![j_{p-1}^*\cF/(j_{p-1,p})_!j_{p-1,p}^*j_{p-1}^*\cF]. \]
The usual open-closed exact sequence gives
    \[ 0 \to (j_{p-1,p})_!j_{p-1,p}^*j_{p-1}^*\cF \to j_{p-1}^*\cF \to (i_{p-1,p})_*i_{p-1,p}^*j_{p-1}^*\cF \to 0, \]
so
    \[ (j_{p-1})_!j_{p-1}^*\cF/(j_p)_!j_p^*\cF = (j_{p-1})_!(i_{p-1,p})_*i_{p-1,p}^*j_{p-1}^*\cF = (i_p)_!i_p^*\cF, \]
so we conclude by noting that by definition,
    \[ H^{p+q}_c(X, (i_p)_!i_p^*\cF) = H^{p+q}_c(C_p, \cF|_{C_p}). \qedhere \]
\end{proof}

Thus, the cohomology of a complex of sheaves on $X$ is determined by its restriction to each of the strata. If the cohomology of the strata is simple, the total cohomology is determined in a simple way:

\begin{p}
\label{p:collapse_of_ss}
Assume the notation in \autoref{p:spectral_sequence_for_stratification}. If each $R\Gamma_c(C_p, \cF|_{C_p})$ has cohomology concentrated in degrees of the same parity, and this parity of is constant as $p$ varies, then
    \[ R\Gamma_c(X,\cF) \simeq \bigoplus_{p=1}^n R\Gamma_c(C_p, \cF|_{C_p}). \]
\end{p}
\begin{proof}
Adjacent columns in $E_1^{p,q}$ are concentrated in degrees of distinct parities. Therefore the differentials vanish on every page, by considering their shape.
\end{proof}

\newpage

\printbibliography

\end{document}